\title{On a transport equation with nonlocal drift}
\author{Luis Silvestre}
\address{Department of Mathematics, The University of Chicago}
\email{luis@math.uchicago.edu}
\author{Vlad Vicol}
\address{Department of Mathematics, Princeton University}
\email{vvicol@math.princeton.edu}
\theoremstyle{plain}
\newtheorem{theorem}{Theorem}[section]
\newtheorem*{definition}{Definition}
\newtheorem{lemma}[theorem]{Lemma}
\newtheorem{proposition}[theorem]{Proposition}
\newtheorem{corollary}[theorem]{Corollary}
\newtheorem{conjecture}[theorem]{Conjecture}
\theoremstyle{definition}
\newtheorem{remark}[theorem]{Remark}
\numberwithin{equation}{section}
\def\RR{{\mathbb R}}
\def\ZZ{{\mathbb Z}}
\newcommand{\R}{\mathbb R}
\newcommand{\eps}{\varepsilon}
\renewcommand{\phi}{\varphi}
\renewcommand{\tilde}{\widetilde}
\newcommand{\lap} {\Delta}
\newcommand{\dx} {\; \mathrm{d} x}
\newcommand{\dd} {\; \mathrm{d}}
\DeclareMathOperator*{\osc}{osc}
\DeclareMathOperator{\sgn}{sign}
\begin{document}
\begin{abstract}
In \cite{CordobaCordobaFontelos05}, C\'ordoba, C\'ordoba, and Fontelos proved that for some initial data, the following nonlocal-drift variant of the 1D Burgers equation does not have global classical solutions
\[
\partial_t \theta +u \; \partial_x \theta = 0, \qquad  u = H \theta,
\] 
where $H$ is the Hilbert transform. We provide four essentially different proofs of this fact. Moreover, we study possible H\"older regularization effects of this equation and its consequences to the equation with diffusion
\[
\partial_t \theta + u \; \partial_x \theta + \Lambda^\gamma \theta = 0, \qquad u = H \theta,
\] 
where $\Lambda = (-\Delta)^{1/2}$, and $1/2 \leq \gamma <1$.
Our results also apply to the model with velocity field $u = \Lambda^s H \theta$, where $s \in (-1,1)$. We conjecture that solutions which arise as limits from vanishing viscosity approximations are bounded in the H\"older class in $C^{(s+1)/2}$, for all positive time. \hfill \today.
\end{abstract}

\maketitle

\section{Introduction}
\label{sec:Intro}

The question of finite time singularities for the 3D incompressible Euler equations, from smooth initial datum, is one of the fundamental problems in analysis. In the hope of understanding certain aspects of this question, motivated either by physical scenarios, numerical simulations, or simply by phenomenological analogies, over the past decades several simplified models have been proposed.  Among these, the two-dimensional  surface quasi-geostrophic (SQG) equation introduced by Constantin-Majda-Tabak~\cite{ConstantinMajdaTabak94} stands out for its striking analytic and geometric similarities to the 3D Euler equations. 
One-dimensional models have been proposed even earlier, by Constantin-Lax-Majda~\cite{ConstantinLaxMajda85}, DeGregorio~\cite{DeGregorio90,DeGregorio96}, and a number of further works~\cite{Morlet98,Sakajo03,CordobaCordobaFontelos05,ChaeCordobaCordobaFontelos05,OkamotoSakajoWunsch08,CastroCordoba08,LiRodrigo11}. See also the models very recently considered in~\cite{LuoHou13,ChoiKiselevYao13,ChoiHouKiselevLuoSverakYao14}. While for the SQG equations the question of singularities in finite time remains completely open, for the aforementioned one-dimensional models, the emergence of singularities is well understood.

In this paper we consider the 1D nonlocal transport equation introduced by C\'ordoba, C\'ordoba, and Fontelos in~\cite{CordobaCordobaFontelos05}
\begin{align}
 \partial_t \theta + u \, \partial_x \theta = 0, \qquad u = H\theta,
 \label{eq:CCF}
\end{align}
where
\begin{align}
H\theta(x) = \frac{1}{\pi} p.v. \int_{\RR} \frac{\theta(y)}{y-x} \dd y
\label{eq:Hilbert}
\end{align}
is the Hilbert transform, and $(t,x) \in [0,\infty) \times \RR$. With this convention,  $H \partial_x \theta = -\Lambda \theta$, where $\Lambda = (-\Delta)^{1/2}$. 

Phenomenologically, this equation may be viewed as a toy-model for the 2D SQG equation. However \eqref{eq:CCF} has appeared earlier in the literature in view of the strong analogies with the Birkhoff-Rott equations modeling the evolution of a vortex sheet~\cite{BakerLiMorlet96,Morlet98}. Notwithstanding the fact that this is the simplest nonlocal active scalar equation that one can write in 1D with a zero order constitutive law for the velocity, a number of open questions remain, cf.~Section~\ref{sec:Conjectures} below.

Equation \eqref{eq:CCF} lies in the middle of a scale of equations with nonlocal velocity 
\begin{align}
 \partial_t \theta + u \, \partial_x \theta  = 0, \qquad u = \Lambda^{s} H\theta,
 \label{eq:CCF:smooth:drift}
\end{align}
where $s \in (-1,1)$. This system, recently considered in~\cite{DongLi12} as a simplified model for the 2D $\alpha$-patch problem~\cite{Gancedo08}, interpolates between the classical Hamilton-Jacobi equation (for $s=1$) and a one-dimensional version of the 2D Euler vorticity equation ($s=-1$). Whereas the regularity of solutions when $s=1$ and $s=-1$ is very well understood, the intermediate cases present a number of additional difficulties due to their nonlocal nature, best exemplified by the case $s=0$.

Our interest in the model \eqref{eq:CCF} (and by extrapolation in the model \eqref{eq:CCF:smooth:drift}) also comes from an analytical point of view: the $L^\infty$ norm, which is conserved for classical solutions, may not be the strongest a-priori controlled quantity. At least for a certain class of initial data, we conjecture in Section~\ref{sec:Conjectures} that weak solutions which arise as limits of viscous regularizations have a decaying H\"older $1/2$ norm (the corresponding norm for \eqref{eq:CCF:smooth:drift} is $C^{(1+s)/2}$). While such a behavior may seem quite unintuitive given the transport nature of the equations, the phenomenon of an ``attracting regularity'' may be natural in the context of fully developed 3D turbulence. Here the Kolmogorov theory predicts that due to the forward energy cascade the $C^{1/3}$ regularity is in some sense ``stable''. We note that in the context of shell-models for 3D Euler, evidence of this phenomenon was recently obtained in~\cite{CheskidovZaya13}. Moreover, as we discuss in Section~\ref{sec:Conjectures} an a priori control on the $C^{1/2}$ norm for the inviscid problem is expected to show that the dissipative version of \eqref{eq:CCF}
\begin{align}
 \partial_t \theta + u \, \partial_x \theta + \Lambda^\gamma \theta = 0, \qquad u = H\theta,
 \label{eq:CCF:dissipative}
\end{align}
where $\gamma \geq 1/2$, has global in time smooth solutions, thereby answering Conjecture~1 in \cite{Kiselev10}. This scenario would be particularly interesting as it is not based on scaling arguments around the control obtained from the maximum principle.

Before discussing our results and the above mentioned conjectures in detail, we recall the previous works on the models \eqref{eq:CCF}, \eqref{eq:CCF:smooth:drift}, and \eqref{eq:CCF:dissipative}. The local existence of strong solutions to \eqref{eq:CCF}  was obtained in \cite{BakerLiMorlet96,Morlet98}. The emergence of finite time singularities from smooth initial datum for \eqref{eq:CCF} has been established in the remarkable work~\cite{CordobaCordobaFontelos05}. The initial datum considered there is even, non-negative and decreasing away from the origin, and the blowup scenario observed is that a cusp forms at the origin in finite time. This blowup proof was extended in~\cite{CordobaCordobaFontelos06} to cover a much wider class of initial data, via a series of new weighted integral inequalities (which are interesting in their own right). The finite time blowup is obtained from any non-constant initial data, which is natural in view of the 2-parameter scaling invariance of the equations (see Section~\ref{sec:Prelim}).  In~\cite{CastroCordoba09} the authors construct an explicit ``expanding-semicircle'' self-similar solution $\theta(t,x) = - C ( 1 - x^2/t^2)_{+}^{1/2}$ which is $C^{1/2}$ smooth, where $C>0$ is a universal constant. This is a global weak solution (in a certain sense) with almost everywhere $0$ initial datum. Numerical experiments suggest that this is a stable weak solution.

Regarding the dissipative equation~\eqref{eq:CCF:dissipative}, the global existence in the  $L^\infty$-subcritical case $\gamma>1$ was established in \cite{CordobaCordobaFontelos05}. The global well-posedness in $L^\infty$-critical case $\gamma=1$, with general initial datum was first shown in \cite{Dong08}. We note that the method developed in~\cite{Silvestre10b} for general linear drift-diffusion equations also yield this result (the methods developed in \cite{KiselevNazarovVolberg07,CaffarelliVasseur10,ConstantinVicol12,ConstantinTarfuleaVicol13} for global regularity of the critically dissipative SQG equation only appear to work in the case of positive initial datum where the $L^2$ norm of the solution is under control). The finite time blowup for the dissipative equation \eqref{eq:CCF:dissipative} with $\gamma<1/2$ was established for the first time in \cite{LiRodrigo08}, by adapting the methods developed in~\cite{CordobaCordobaFontelos05}. The inviscid and viscous blowup proofs have been revisited in~\cite{Kiselev10}, using very elegant, elementary methods, but still the question of finite time singularities in the parameter range $1/2\leq \gamma <1$ remains to date open. In comparison, for the fractal Burgers equation, i.e. $u=\theta$ in \eqref{eq:CCF}, it is known that finite time blowup occurs for any $\gamma<1$~\cite{AlibaudDroniouVovelle07,KiselevNazarovSheterenberg08,DongDuLi09,DabkowskiKiselevSilvestreVicol14}. The analogy with the Burgers equation is however tentative at best, since as opposed to $\theta\; \partial_x \theta$, the nonlinearity in \eqref{eq:CCF} is dissipative for $L^p$ norms with $1\leq p < \infty$, which hints to a regularizing mechanism.  Lastly, in~\cite{Do14} the eventual regularity for the fractionally dissipative equation is shown for the entire range $0< \gamma <1$, in the spirit of~\cite{ChanCzubakSilvestre10,Silvestre10a,Dabkowski11,Kiselev11} for the supercritically dissipative Burgers and SQG equations.

We mention that recently in~\cite{DongLi12}, motivated by analogies with the 2D $\alpha$-patch problem (see, e.g~\cite{Gancedo08}), the authors consider the system \eqref{eq:CCF:smooth:drift} and its fractionally dissipative counterpart. They prove the finite time blowup for the inviscid and slightly viscous problem and the global regularity for the critically dissipative problem. The method extends the arguments in~\cite{CordobaCordobaFontelos06,LiRodrigo08} by establishing a series of new weighted inequalities.

The main results of this paper are as follows. We give four essentially different proofs of finite time blowup from smooth initial datum for~\eqref{eq:CCF}, cf.~Theorems~\ref{p:blowup},~\ref{thm:telescope:drift},~\ref{l:globaldecay}, and~\ref{thm:barrier}. The main ideas are:
\begin{enumerate}
 \item The proof in Section~\ref{sec:CCF:revisited} is based on a new identity for $\Lambda$ (Proposition \ref{p:identity}):
\begin{align}
\label{eq:new:identity} 
\Lambda[f \Lambda f](0) + f(0) f''(0) =  \frac 12( \Lambda f(0))^2 - \frac 12 (f'(0))^2 - \left\|\frac{f(x)-f(0)}{x}\right\|_{\dot H^{1/2}}^2
\end{align}
which holds for any sufficiently smooth function $f$. This identity encapsulates the information of Theorem 1.1 (II) of \cite{CordobaCordobaFontelos06} as $\alpha \to 2$ when we apply it to $f = H\theta$ (see also Remark~\ref{rem:CCF} below). Using \eqref{eq:new:identity} we deduce that if the initial data is strictly positive at any point, then there cannot be a global $C^1$ solution. This implication is similar to how Theorem 1.1 of \cite{CordobaCordobaFontelos06} is used there to show that there is finite time blowup in \eqref{eq:CCF}. This approach is the most similar  to \cite{CordobaCordobaFontelos06} from the ones we show in this paper. It is interesting that we avoid complex integration and perhaps the identity \eqref{eq:new:identity} may also be useful in other contexts.
 \item The proof in Section~\ref{sec:Telescoping} is based on a virial proof by contradiction, in the same spirit as the proofs given in~\cite{CordobaCordobaFontelos05,CordobaCordobaFontelos06,LiRodrigo08,Kiselev10,DongLi12}. The main novelty of this proof over the aforementioned works is that we do not appeal to delicate complex analysis arguments, nor to any integration by parts in the nonlinear term. The main tool is a local in space lower bound for the nonlinearity (Lemma~\ref{lem:nonlinear:positivity}):
\begin{align}
\label{eq:new:nonlinearity:bound}
\int_{x_2}^{x_1} H\theta \, \partial_x \theta \dd x \geq \frac{1}{4\pi} \log \frac{x_1+x_2}{x_1-x_2} (\theta(x_2)-\theta(x_1))^2
\end{align}
which holds for any $0<x_2<x_1$, for $\theta$ that is even, and decreasing away from the origin (a property that is maintained by the solution of \eqref{eq:CCF} if the initial data obeys it~\cite{Kiselev10}). Estimate \eqref{eq:new:nonlinearity:bound} is then used with a suitable spacial weight for dyadic points $x_i = 2^{-i}$ in order to establish the finite time blowup of \eqref{eq:CCF}, \eqref{eq:CCF:smooth:drift}, and \eqref{eq:CCF:dissipative}.
 \item The proof in Section~\ref{sec:DeGiorgi} is based on the DeGiorgi iteration scheme. The nonlinearity dissipates the $L^1$ norm of the solution, idea which at the level of truncations $\theta_k = (\theta - C_k)^+$ yields
 \[
 \partial_t \| \theta_k \|_{L^1} + \|\theta_k\|_{\dot{H}^{1/2}}^2 \leq 0.
 \] 
Fed into the DeGiorgi iteration, the above estimate implies a decay for $\|\theta(t,\cdot)\|_{L^\infty}$ and shows that classical solutions must blowup in finite time (for classical solutions the oscillation of the solution is constant in time). 
This idea is closely related to the work of Alexis Vasseur and Chi Hin Chan on applying De Giorgi's technique on the Hamilton-Jacobi equation~\cite{VasseurChan}. In fact, we are aware that they have independently arrived to this proof for equation \eqref{eq:CCF} as well.
 \item The proof in Section~\ref{sec:Barrier} is based on constructing a suitable barrier for the solution, loosely in the spirit of a similar idea employed for the Hamilton-Jacobi equations~\cite{CardalieguetSilvestre12}. We show in Lemma~\ref{l:barrier} that  an even, monotone away from the origin, non-negative solution lies below the barrier
 \[
 \theta(t,x) \leq \theta(0,1/2) + \frac{A}{t} (1-|x|^{1/2})^+
 \]
 for some positive universal constant $A$, for all $(t,x) \in (0,\infty) \times \RR$. Again, since for classical solutions the $L^\infty$ norm is constant, it follows that classical solutions cannot live forever. This proof is perhaps the most elementary of the four blowup proofs.
\end{enumerate}
All these proofs mutatis mutandis yield the finite time blowup for~\eqref{eq:CCF:smooth:drift} in the entire parameter range $s\in (-1,1)$, recovering the results in~\cite{DongLi12}. The telescoping series proof in Section~\ref{sec:Telescoping} also directly applies to the fractionally dissipative equation~\eqref{eq:CCF:dissipative}, where it yields finite time blowup in the parameter range $\gamma < 1/2$, cf.~Theorem~\ref{thm:telescope:dissipative}, thereby recovering the results in~\cite{LiRodrigo08,Kiselev10}. The question of finite time singularities for $1/2\leq \gamma < 1$ remains open.

Lastly, in Theorem~\ref{thm:stationary} we prove that stationary solutions to the inhomogenous version of equation \eqref{eq:CCF} (respectively \eqref{eq:CCF:smooth:drift}) with bounded right hand side are $C^{1/2}$ smooth (respectively $C^{(1+s)/2}$). The proof is a direct consequence of the lower bound \eqref{eq:new:nonlinearity:bound}, and thus holds for functions $\theta$ that are even, monotone away from the origin, and non-negative. This result is motivated, and in fact also partly motivates, the discussion in Section~\ref{sec:Conjectures} in which we present two conjectures for viscosity solutions to \eqref{eq:CCF}.

We conjecture that vanishing viscosity solutions to \eqref{eq:CCF} have a bounded H\"older $1/2$ norm, for all $t>0$ (cf.~Conjecture~\ref{conj:Inviscid}). In general, we conjecture that solutions to \eqref{eq:CCF:smooth:drift} are controlled in $C^{(1-s)/2}$. In support of this conjecture, we note that none of the different blowup proofs in this paper necessarily imply the blowup of any $C^\alpha$ norm of the solution to \eqref{eq:CCF} with $\alpha\leq 1/2$. In fact, there are two explicit solutions of \eqref{eq:CCF} that have a singularity of order exactly H\"older $1/2$: 
\[
\theta(t,x) = - |x|^{1/2} - C_1 t \qquad  \mbox{and}  \qquad \theta(t,x) = - C_2 (1 - x^2/t^2)_{+}^{1/2},
\] 
where the $C_i>0$ are suitable universal constants. It is not clear however whether these solutions may be obtained as limits of vanishing viscosity approximations to \eqref{eq:CCF}. Besides the result in Theorem~\ref{thm:stationary} which shows that certain stationary solutions do in fact obey $C^{1/2}$ bounds, a last argument in favor of such a regularizing effect, perhaps similar to the one for the Hamilton-Jacobi equations, is suggested by the numerical experiments which rely on the code available at \url{http://math.uchicago.edu/~luis/pde/hilbert.html}. In particular, since we expect this regularizing phenomenon to be still valid for the fractionally dissipative equations, it would imply that the equation \eqref{eq:CCF:dissipative} with $\gamma \geq 1/2$ has global smooth solutions (cf.~\cite{Silvestre10b}), thereby answering Conjecture~1 in \cite{Kiselev10}. 

We conclude the paper with a related and perhaps weaker conjecture: that for vanishing viscosity solutions of \eqref{eq:CCF} we have a lower bound $\Lambda \theta(t,x) \geq - A(t,\theta_0)$, for all time $t>0$ and some decaying function $A(t,\theta_0)>0$ (cf.~Conjecture~\ref{conj:Lambda:lower:bound}). Geometrically, such a lower bound holds if the cusps that form in the solution always point up, phenomenon supported by the numerical simulations. In fact we show in Theorem~\ref{thm:Holder} that if this conjecture holds, when $\theta$ is even, non-negative, and decaying away from the origin, then $\theta$ must be H\"older $1/2$ smooth away from the origin. We believe that a lower bound for $\Lambda \theta$ is in fact related to the uniqueness of weak solutions to \eqref{eq:CCF}, in the same way a one-sided bound for the derivative works for the Burgers equation and the one sided-bound on the second-derivative works for the Hamilton-Jacobi equations~\cite{Evans98}.

\section{Preliminaries}
\label{sec:Prelim}
In this section we recall some preliminary observations regarding the initial value problem for \eqref{eq:CCF}.

\subsection{Scaling}
If $\theta$ is a solution to \eqref{eq:CCF}, then for any values of $a,b \in \R$ the rescaled function 
\[ 
\theta_{ab}(t,x) = ab^{-1} \theta(at,bx)
\]
is also a solution to \eqref{eq:CCF}.

For the equation with general drift~\eqref{eq:CCF:smooth:drift} the rescaling is given by $\theta_{ab}(t,x) = ab^{-1-s} \theta(at,bx)$, while for the dissipative equation \eqref{eq:CCF:smooth:drift} the rescaling is given by
$\theta_b(t,x) = b^{\gamma-1}\theta(b^\gamma t, bx)$. Note that in the later case we have a one-parameter family. 

\subsection{Maximum principles}

We first state the $L^\infty$ maximum principle, which follows directly from the transport structure of \eqref{eq:CCF}.

\begin{lemma} \label{lem:maxprinciple}
Let $\theta$ solve \eqref{eq:CCF}. Then $\sup_\R \theta$ is non increasing in time and $\inf_\R \theta$ is non decreasing in time. In particular $\osc_\R \theta$ is non increasing.
\end{lemma}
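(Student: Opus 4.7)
The plan is to prove Lemma \ref{lem:maxprinciple} by following the standard characteristic method for transport equations, adapted to the nonlocal velocity setting. Since the statement concerns classical solutions to \eqref{eq:CCF}, I will work with $\theta(t,\cdot)$ smooth enough (say $C^{1,\alpha}$ with suitable decay at infinity) so that $u = H\theta$ is Lipschitz in $x$ locally uniformly in $t$. This follows because $\partial_x H\theta = -\Lambda\theta \in C^\alpha$, so $u$ has a continuous bounded derivative, which is what we need to solve the characteristic ODE.

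First I would introduce the characteristic flow $X(t;s,y)$ defined by
\begin{equation*}
\frac{d}{dt} X(t;s,y) = u(t, X(t;s,y)), \qquad X(s;s,y)=y,
\end{equation*}
which by Cauchy--Lipschitz is globally defined in $t$ on the interval of existence of the classical solution and is a $C^1$-diffeomorphism of $\RR$ onto itself for each fixed pair $(s,t)$. Then along characteristics, the chain rule together with \eqref{eq:CCF} gives
\begin{equation*}
\frac{d}{dt}\theta(t,X(t;s,y)) = (\partial_t \theta + u\,\partial_x \theta)(t,X(t;s,y)) = 0,
\end{equation*}
so $\theta$ is conserved along characteristics.

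Next I would exploit this conservation to compare sup/inf across times. Fix $0 \leq s \leq t$ and any $y \in \RR$. The identity $\theta(t,X(t;s,y)) = \theta(s,y)$ implies, setting $x = X(t;s,y)$ and using that $y \mapsto X(t;s,y)$ is a bijection of $\RR$, that $\theta(t,x) = \theta(s,X(s;t,x))$ for every $x$. Taking the supremum in $x$ yields $\sup_\RR \theta(t,\cdot) \leq \sup_\RR \theta(s,\cdot)$, and taking the infimum yields $\inf_\RR \theta(t,\cdot) \geq \inf_\RR \theta(s,\cdot)$. The oscillation claim $\osc_\RR \theta(t,\cdot) \leq \osc_\RR \theta(s,\cdot)$ then follows by subtracting these two inequalities.

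The only mild subtlety, which I would address briefly, is ensuring that the sup and inf are actually attained (or approached via suitable sequences) so that the comparison is valid. For classical solutions with appropriate decay this is immediate; for more general classical settings one argues with supremizing sequences and uses that the characteristic map is onto $\RR$, so every point $x$ at time $t$ is the image under $X(t;s,\cdot)$ of some point at time $s$. No obstacle beyond this; the argument is essentially a one-line consequence of the transport structure, and the nonlocality of $u = H\theta$ enters only through needing enough regularity to justify solving the characteristic ODE.
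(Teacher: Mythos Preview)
Your argument is correct and is precisely the standard characteristics computation that the paper has in mind when it says the lemma ``follows directly from the transport structure'' (the paper does not write out a proof). Note that your bijection argument in fact gives equality of the sup and inf across times for classical solutions, which is consistent with the paper's later remark that for classical solutions the oscillation is constant; the inequalities you state are of course the weaker form needed for the lemma.
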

The maximum principle stated in Lemma~\ref{lem:maxprinciple} also holds for solutions to \eqref{eq:CCF:smooth:drift} and \eqref{eq:CCF:dissipative}. In the first case because we have a transport equation, and in the second case because the fractional Laplacian with $\gamma \in (0,2]$ has a maxmimum/minimum principle.

There is no obvious notion of a weak solution to \eqref{eq:CCF}. We assume that there is a tiny extra viscosity term $\eps \lap \theta$ on the right side of \eqref{eq:CCF}, so that the solution is classical, and obtain bounds independent of $\eps$. This vanishing viscosity makes the equation time-irreversible. In particular, the $L^\infty$ norm of $\theta$ will be non-increasing in time instead of constant.

There is a mildly stronger version of the above maximum principle, stated below for completeness.
\begin{lemma}\label{lem:maxprincipleW11}
Let $\theta$ be a classical, i.e. $C^{1,\alpha}$ smooth, decaying at infinity, solution of \eqref{eq:CCF}. The quantity
\[ \int_{\R} |\theta_x(t,x)| \dd x \]
is constant in time.
\end{lemma}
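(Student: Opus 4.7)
The plan is to exploit the fact that \eqref{eq:CCF} is a pure transport equation along the flow of $u=H\theta$, and that the $L^1$ norm of a derivative is invariant under reparametrization by a diffeomorphism.

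First I would set up the flow. Since $\theta$ is $C^{1,\alpha}$ and decays at infinity, $u=H\theta$ is continuous and spatially $C^{1,\alpha}$ on $\R$, with $u(t,x)\to 0$ as $|x|\to\infty$ (from the standard $1/|x|$ decay of the Hilbert transform of an integrable function). Standard ODE theory then yields a unique global flow $X(t,x_0)$ solving $\dot X = u(t,X)$ with $X(0,x_0)=x_0$, and for each fixed $t$ the map $x_0\mapsto X(t,x_0)$ is an orientation-preserving $C^1$ diffeomorphism of $\R$ with positive Jacobian $J(t,x_0)=\partial_{x_0}X(t,x_0)$.

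Second, since \eqref{eq:CCF} says $\theta$ is advected by $u$, one has the characteristic conservation
\[
\theta(t,X(t,x_0)) = \theta(0,x_0) \qquad \text{for all } t\geq 0,\ x_0\in\R.
\]
Differentiating in $x_0$,
\[
\theta_x(t,X(t,x_0))\, J(t,x_0) = \theta_x(0,x_0).
\]
Taking absolute values (using $J>0$) and then changing variables $x=X(t,x_0)$, whose Jacobian is exactly $J$, gives
\[
\int_\R |\theta_x(t,x)|\,\dd x = \int_\R |\theta_x(t,X(t,x_0))|\, J(t,x_0)\,\dd x_0 = \int_\R |\theta_x(0,x_0)|\,\dd x_0,
\]
which is the desired conservation.

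The only real issue is justifying the global $C^1$ flow and the validity of the change of variables. For this we rely on the $C^{1,\alpha}$ regularity and decay of $\theta$: these imply that $u$ is Lipschitz on $\R$ uniformly on compact time intervals (so characteristics exist globally and are unique), and that $\theta_x$ is continuous with sufficient decay for the integrals above to be finite and for Fubini/change of variables to apply without qualification. No PDE-level manipulation of $|\theta_x|$ (which would require regularizing $\sgn$ and controlling a potentially bad $\int|u_x|\,\dd x$) is needed — the Lagrangian viewpoint bypasses these difficulties entirely.
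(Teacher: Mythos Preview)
Your argument is correct and takes a genuinely different route from the paper. The paper works in Eulerian coordinates: it regularizes $|\cdot|$ by a smooth convex $\phi_\eps$ with $\phi_\eps(0)=0$, multiplies the equation for $\theta_x$ by $\phi_\eps'(\theta_x)$, integrates by parts, and uses that $\theta_x\,\phi_\eps'(\theta_x)-\phi_\eps(\theta_x)\to 0$ as $\eps\to 0$ (the Euler identity for the $1$-homogeneous limit $|\cdot|$) to conclude that $\partial_t\int|\theta_x|\,\dd x=0$. Your Lagrangian argument instead uses the geometric fact that the total variation $\int|\theta_x|\,\dd x$ is invariant under composition with an orientation-preserving diffeomorphism, which is exactly what the characteristic flow provides. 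Your approach is cleaner in that it avoids regularizing the absolute value and the attendant limit justification. The paper's approach, on the other hand, adapts immediately to the viscous problem: the convexity of $\phi_\eps$ gives the correct sign on the additional term coming from $\eps\Delta\theta$, yielding that the $W^{1,1}$ norm is non-increasing for vanishing-viscosity solutions (as noted right after the lemma). The Lagrangian picture does not extend so directly once the equation is no longer pure transport.
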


\begin{proof}
Consider $\{ \phi_\eps(\cdot)\}_{\eps>0}$ a family of non-negative smooth convex functions, with $\phi_\eps(0)=0$, which converge to the absolute value function $|\cdot|$, as $\eps \to 0$. Multiplying the equation obeyed by $\theta_x$ with $\phi_\eps'(\theta_x)$ and integrating over space we arrive at 
\[
\partial_t \int_{\RR} \phi_\eps(\theta_x) \dd x + \int_{\RR} H \theta_x \left( \theta_x \phi_\eps'(\theta_x) - \phi_\eps(\theta_x) \right) \dd x = 0.
\]
The boundary terms arising in the above computation vanish since we assume $\theta_x \to 0$ as $|x|\to \infty$, and we considered $\phi_\eps(0)=0$. The proof is complete by passing $\eps \to 0$ since $\phi_\eps(x) - x \phi_\eps'(x) \to 0$ pointwise. 
\end{proof}
Using the convexity assumption on $\phi_\eps$ one can show that for solutions which arise as limits of viscous regularizations, the $W^{1,1}$ nor is non-increasing in time.

\subsection{Solutions with symmetries}
In Sections~\ref{sec:Telescoping}, \ref{sec:Barrier}, and parts of Section~\ref{sec:Conjectures} we restrict the attention to initial data $\theta_0$ that is smooth, non-negative, decaying at infinity, even in $x$, and with $\sgn(x)\, \partial_x \theta_0 \leq 0$. The (viscosity) solution will inherit all these properties for positive time (see Lemma~\ref{lem:fractional:good:properties}), except possibly the smoothness. If $\theta$ is an even function, the formula \eqref{eq:Hilbert} becomes
\begin{align}
H\theta (x) = \frac{2x}{\pi} \int_0^\infty \frac{\theta(y)-\theta(x)}{y^2-x^2} \dd y.
\label{eq:Hilbert:even}
\end{align}
Note that if $\theta$ is even and monotone decreasing away from the origin, then $H\theta(x) < 0$ {\em at every point} $x>0$ unless $\theta$ is constant.

\section{Singularity formation via identities for the Zygmund operator}
\label{sec:CCF:revisited}

In this section we provide our first proof that some solutions to the equation \eqref{eq:CCF} must develop a singularity in finite time. 
Before turning to the proof of Theorem~\ref{p:blowup} we use a number of identities for the Hilbert transform in order to obtain a new identity for $\Lambda$, cf.~Proposition~\ref{p:identity} below.

\subsection{Preliminary identities around the Hilbert transform}
\begin{lemma} \label{l:i1}
For any function $g: \R \to \R$,
\[ H[xg(x)] = x Hg(x) + \int_{\R} g(x) \dd x,\]
\end{lemma}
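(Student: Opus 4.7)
The plan is to evaluate the principal-value integral defining $H[xg(x)]$ and use the trivial algebraic decomposition $y/(y-x) = 1 + x/(y-x)$ to split it into two pieces.

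More precisely, from the definition \eqref{eq:Hilbert} applied to the function $y \mapsto y g(y)$, I would write
\[
H[xg(x)](x) = \frac{1}{\pi}\, p.v. \int_{\R} \frac{y\, g(y)}{y-x} \dd y
= \frac{1}{\pi}\, p.v. \int_{\R} \left( 1 + \frac{x}{y-x}\right) g(y) \dd y.
\]
The first term is an absolutely convergent integral (no singularity at $y=x$), so the principal value reduces to an ordinary integral and yields $\frac{1}{\pi}\int_{\R} g(y)\dd y$; the second term, after pulling the constant $x$ outside the integral, is precisely $x \,Hg(x)$. Adding these gives the claimed identity (up to the standard $1/\pi$ normalization in \eqref{eq:Hilbert}).

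The only subtle point is making sure that the splitting is legitimate, which requires that each of the two pieces separately make sense: the piece $\frac{1}{\pi}\int g$ requires $g \in L^1$, and the piece $xHg(x)$ requires enough decay/regularity on $g$ for $Hg$ to be defined. Under the mild hypotheses that will be present wherever this lemma is invoked (e.g.\ Schwartz-class or sufficiently decaying $g$), both are automatic, and one can justify pulling $x$ out of the principal-value symbol by a standard truncation-and-limit argument: replacing $p.v.\int$ by $\lim_{\eps \to 0}\int_{|y-x|>\eps}$, performing the linear splitting on the truncated integrals, and then letting $\eps \to 0$ in each piece independently.

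I do not expect any real obstacle here — the identity is essentially a one-line algebraic manipulation inside the integral, and the only care is the bookkeeping of the principal value. This lemma is purely a preparatory computation to be used in the subsequent identities for the Hilbert transform and for $\Lambda$ in Section~\ref{sec:CCF:revisited}.
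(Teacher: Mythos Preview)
Your proposal is correct and is essentially identical to the paper's proof: both use the algebraic identity $y = x + (y-x)$ (equivalently, $y/(y-x) = 1 + x/(y-x)$) inside the principal-value integral and split into the two pieces $xHg(x)$ and $\int g$. You are also right to flag the $1/\pi$ normalization; the paper silently drops this factor in both the statement and the proof, so the identity as written is off by exactly that constant.
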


\begin{proof}
We compute $H[xg(x)]$ directly.
\begin{align*}
H[xg(x)] &= p.v. \int_{\R} \frac{ y g(y) }{y-x} \dd y, \\
&= x Hg(x) + p.v. \int_{\R} \frac{ (y-x) g(y) }{y-x} \dd y, \\
&= x Hg(x) + p.v. \int_{\R}  g(y) \dd y. 
\end{align*}
\end{proof}

\begin{remark}
Note that the function $Hg(x) + i g(x)$ is the value on the real line of a holomorphic function $f(z)$ in the upper half space. The function $x Hg(x) + i x g(x)$ is the value on the real line of $z f(z)$. This suggests the formula $H[xg(x)] = x Hg(x)$, which is correct up to the addition of a constant term.

The Hilbert transform is a well defined isometric isomorphism in $L^2(\R)$. Also $Hf$ makes sense for many functions $f$ which are not in $L^2$. If may extend it to any function $f$ which is the imaginary part of the value on $\R$ of a holomorphic function in the upper half space. 
\end{remark}

\begin{corollary} \label{c:i1}
For any function $g: \R \to \R$,
\[ \Lambda[x g(x)] = -Hg(x) + x \Lambda g(x).\]
\end{corollary}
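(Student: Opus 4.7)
The plan is to reduce Corollary \ref{c:i1} to Lemma \ref{l:i1} by using the operator identity $\Lambda = -\partial_x H$. This identity is essentially the content of the convention $H\partial_x \theta = -\Lambda\theta$ stated in the introduction, combined with the fact that the Hilbert transform is a Fourier multiplier and thus commutes with $\partial_x$.

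Concretely, I would proceed in three short steps. First, apply $\Lambda = -\partial_x H$ to the product $xg(x)$ to obtain
\[
\Lambda[x g(x)] = -\partial_x H[x g(x)].
\]
Second, substitute the identity from Lemma~\ref{l:i1} to rewrite the right-hand side as
\[
\Lambda[x g(x)] = -\partial_x \left( x\, Hg(x) + \int_\R g(y)\, \mathrm{d}y \right).
\]
The integral is a constant in $x$ (for $g$ in a suitable class, e.g.\ Schwartz, so that Lemma~\ref{l:i1} applies), hence it disappears after differentiation. Third, use the product rule together with one more application of $\Lambda = -\partial_x H$ to conclude
\[
\Lambda[x g(x)] = -Hg(x) - x\,\partial_x Hg(x) = -Hg(x) + x\,\Lambda g(x),
\]
which is the stated identity.

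There is no real obstacle here, since the substance of the corollary is already in Lemma~\ref{l:i1}; the only care required is to work in a function class in which each step is justified (so that $Hg$, $\Lambda g$, and $\int g$ all make classical sense and commutation of $H$ with $\partial_x$ is valid), after which the identity extends to more general $g$ by density or duality against a Schwartz test function.
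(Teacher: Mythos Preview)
Your proposal is correct and is exactly the paper's approach: the authors simply state that the corollary follows by differentiating the identity in Lemma~\ref{l:i1}, which is precisely your application of $\Lambda = -\partial_x H$ followed by the product rule. The only difference is that you spell out the three steps explicitly, whereas the paper compresses them into one sentence.
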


\begin{proof}
The corollary follows by differentiating the identity in Lemma \ref{l:i1}.
\end{proof}

\begin{lemma} \label{l:i2}
For any function $g: \R \to \R$,
\[ H[g Hg] = \frac 12 Hg^2 - \frac 12 g^2.\]
\end{lemma}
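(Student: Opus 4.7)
The plan is to exploit the complex-analytic description of the Hilbert transform noted in the remark following Lemma \ref{l:i1}: for a real-valued $g$ with sufficient decay, the combination $F(z) := Hg(z) + i\,g(z)$ extends to a holomorphic function on the upper half-plane that vanishes at infinity. This is the same viewpoint used there to motivate the ``$H[xg(x)] = x\,Hg(x)$ up to a constant'' heuristic.

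First, a direct algebraic manipulation at boundary values on $\R$ gives
\[
F^2 \;=\; \bigl((Hg)^2 - g^2\bigr) \;+\; 2i\,(g\,Hg).
\]
Next, I would note that $F^2$ is again holomorphic in the upper half-plane with good decay, hence its boundary value on $\R$ satisfies the same real/imaginary correspondence that characterizes the Hilbert transform: if a real-line function of the form $a + ib$ arises as such a boundary value, then $Hb = a$ (one can check this on any convenient example, e.g.\ $i/(z+i)$, with the paper's sign convention). Applying this correspondence to $F^2$, whose real part is $(Hg)^2 - g^2$ and whose imaginary part is $2\,g\,Hg$, yields
\[
H[2\,g\,Hg] \;=\; (Hg)^2 - g^2,
\]
which is exactly the claimed identity after dividing by $2$.

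The main obstacle, which is quite mild, is justifying the boundary-value formalism for $F^2$: one needs $F^2$ to possess the required non-tangential limits on $\R$ and enough decay at infinity for the real/imaginary Hilbert correspondence to apply. For $g$ in the Schwartz class (or more generally in $L^2\cap L^\infty$ with modest decay) this is routine, since $F$ itself decays and is bounded, and the identity then extends by density to the function spaces used elsewhere in the paper. As a backup, one could instead derive the identity purely at the level of Fourier symbols using $\widehat{Hg}(\xi) = i\,\sgn(\xi)\,\hat g(\xi)$, reducing it to a direct convolution computation on the distributional side; but the complex-analytic argument above is the most conceptual and is in the spirit of the remark immediately preceding the lemma.
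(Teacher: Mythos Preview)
Your argument is correct and is exactly the paper's approach: the one-line proof in the paper reads ``It is a direct consequence of $(Hg+ig)^2 = ((Hg)^2 - g^2) + 2i\,g\,Hg$,'' which is precisely the holomorphic-extension computation you spell out. Your additional remarks on justifying the boundary correspondence for $F^2$ and the Fourier-symbol backup are reasonable elaborations, but not a different method.
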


\begin{proof}
It is a direct consequence of $(Hg+ig)^2 = (Hg^2 - g^2) + 2i g Hg$.
\end{proof}

\subsection{An identity for \texorpdfstring{$\Lambda$}{Lambda}}

\begin{proposition} \label{p:identity}
Let $f:\R \to \R$ be any function. Then
\[ \Lambda[f \Lambda f](0) + f(0) f''(0) =  \frac 12( \Lambda f(0))^2 - \frac 12 (f'(0))^2 - \left\|\frac{f(x)-f(0)}{x}\right\|_{\dot H^{1/2}}^2.\]
\end{proposition}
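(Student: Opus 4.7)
My plan is to reduce the identity to the elementary identities of Lemmas~\ref{l:i1}, \ref{l:i2} and Corollary~\ref{c:i1} by first subtracting off the constant $f(0)$. Concretely, I would set
\[
h(x) = \frac{f(x)-f(0)}{x},
\]
so that $f(x) = f(0) + x h(x)$, with $h(0) = f'(0)$ and $h'(0) = \tfrac{1}{2} f''(0)$ (Taylor expansion at the origin). Since $\Lambda$ annihilates constants, Corollary~\ref{c:i1} immediately gives $\Lambda f(x) = -H h(x) + x \Lambda h(x)$, and in particular $\Lambda f(0) = -H h(0)$. This is the key reduction: the RHS of the desired identity is naturally expressed in terms of $h$, because $\left\|\frac{f(x)-f(0)}{x}\right\|_{\dot H^{1/2}}^2 = \int h \Lambda h\, dx$.

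Next I would expand the product
\[
f\,\Lambda f \;=\; (f(0) + xh)\,(-Hh + x\Lambda h) \;=\; -f(0)\,Hh \;+\; f(0)\,x\,\Lambda h \;-\; x\,h\,Hh \;+\; x^2\,h\,\Lambda h,
\]
apply $\Lambda$ term by term, and evaluate at $0$. For each term I would use the tools at hand: $\Lambda H = \partial_x$ (since $H^2 = -I$ and $\Lambda = -\partial_x H$) handles the first term, yielding $-f(0) h'(0) = -\tfrac12 f(0) f''(0)$; Corollary~\ref{c:i1} applied to $g = \Lambda h$ handles the second term, producing another $-\tfrac12 f(0) f''(0)$ after the boundary term $-H\Lambda h(0) = -h'(0)$ is identified; Corollary~\ref{c:i1} applied to $g = hHh$, together with Lemma~\ref{l:i2}, handles the third term and produces $\tfrac12 (\Lambda f(0))^2 - \tfrac12 (f'(0))^2$ via $-\Lambda[xhHh](0) = H[hHh](0) = \tfrac12 (Hh(0))^2 - \tfrac12 h(0)^2$; finally, writing $x^2 h\Lambda h = x \cdot (x h\Lambda h)$ and using Corollary~\ref{c:i1} reduces the last term at the origin to $-H[xh\Lambda h](0)$, and Lemma~\ref{l:i1} (with the evaluation point at $0$) collapses this to $-\int_{\RR} h\,\Lambda h\,dx = -\|h\|_{\dot H^{1/2}}^2$.

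Summing these four contributions, the two $-\tfrac12 f(0) f''(0)$ terms combine into $-f(0) f''(0)$, which is precisely what gets moved to the left-hand side of the claimed identity, and the remaining three terms reproduce the right-hand side verbatim. Conceptually there is no hard analytic step: once the decomposition $f = f(0) + xh$ is in place, everything follows from linearity of $\Lambda$ and the three preparatory identities. The main obstacle I foresee is purely bookkeeping — tracking signs and constants through four applications of $\Lambda$ and making sure that each use of Corollary~\ref{c:i1} and Lemma~\ref{l:i1} is legitimate (i.e.\ $f$ has enough decay and smoothness for $H$ and $\Lambda$ to be defined, and for the integral $\int h\,\Lambda h$ to converge); this is handled by assuming $f$ is, say, a Schwartz-class perturbation of $f(0)$, after which the identity extends by density to the natural function class.
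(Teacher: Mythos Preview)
Your proof is correct and follows essentially the same route as the paper: the same decomposition $f(x)=f(0)+xh(x)$ and the same three preparatory identities (Lemma~\ref{l:i1}, Lemma~\ref{l:i2}, Corollary~\ref{c:i1}) do all the work. The only difference is cosmetic---the paper first assumes $f(0)=0$ without loss of generality, which collapses your four-term expansion to two and avoids tracking the $f(0)f''(0)$ contributions separately.
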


\begin{proof}
By subtracting a constant, we can assume without loss of generality that $f(0)=0$, in which case the second term on the left hand  term vanishes.

Assuming $f(0) = 0$, we want to compute
\begin{equation} \label{e:i1}  \Lambda[f \Lambda f](0) = - p.v. \int_\R \frac{f(x) \Lambda f(x)}{x^2} \dd x.
\end{equation}
Let $f(x) = x g(x)$. Since $f(0)=0$, we can always find such function $g$. Moreover, we observe that
\[ g(0) = f'(0) \text{ and } Hg(0) = -\Lambda f(0).\]
We rewrite \eqref{e:i1} in terms of $g$ using Corollary \ref{c:i1}.
\begin{align*}
\Lambda[f \Lambda f](0) &= - p.v. \int_{\R} \frac{-x g(x) Hg(x) + x^2 g(x) \Lambda g(x)}{x^2} \dd x,\\
&= p.v. \int_{\R} \frac{g(x) Hg(x)}{x} \dd x - p.v. \int_{\R} g(x) \Lambda g(x) \dd x, \\
&= H[gHg](0) - \|g\|_{\dot H^{1/2}}^2.
\end{align*}
We use Lemma \ref{l:i2} to rewrite the term $H[gHg]$ and arrive at
\begin{align*}
\Lambda[f \Lambda f](0) &= \frac 12 (Hg(0))^2 - \frac 12 (g(0))^2 - \|g\|_{\dot H^{1/2}}^2.
\end{align*}
Recalling that $g(0) = f'(0)$ and $Hg(0) = -\Lambda f(0)$, we obtain the identity
\[ 
\Lambda[f \Lambda f](0) = \frac 12 (\Lambda f(0))^2 - \frac 12 (f'(0))^2 - \|f(x)/x\|_{\dot H^{1/2}}^2
\]
which concludes the proof.
\end{proof}

\begin{remark}
\label{rem:CCF}
The right hand side in Proposition \ref{p:identity} does not have any particular sign. Indeed, if $g$ is the function as in the proof of Proposition \ref{p:identity}, we get
\begin{align*}
\Lambda[f \Lambda f](0) &= \frac 12 (Hg(0))^2 - \frac 12 (g(0))^2 - \|g\|_{\dot H^{1/2}}^2, \\
&= \frac 12 (Hg(0))^2 - \frac 12 (g(0))^2 - \|Hg\|_{\dot H^{1/2}}^2.
\end{align*}
Choosing $g$ odd, we would have $g(0)=0$. Thus $Hg$ would be an arbitrary even function, and in general the $\|Hg\|_{\dot H^{1/2}}$ norm canot control the value of $Hg(0)$.

Note that this does not contradict the result in \cite{CordobaCordobaFontelos06} since there is a typo in Theorem 1.1 (II). The assumption in that theorem is meant to say that $f- f(0)$ is a nonnegative (or nonpositive) function.
\end{remark}

\subsection{Blow-up proof using Hilbert transform identity}

\begin{theorem} \label{p:blowup}
Let $\theta_0$ be any function which converges to zero as $x \to \pm \infty$ and has a positive global maximum at a point $x_0$. Then, a $C^1$ solution to \eqref{eq:CCF} such that $\theta(0,x) = \theta_0$, cannot exist for all time.
\end{theorem}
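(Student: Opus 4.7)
My plan is to follow the global maximum of $\theta$ along a characteristic and combine the evolution equation with the new identity of Proposition~\ref{p:identity} to derive a Riccati-type ODE for $\Lambda \theta$ at that moving maximum.

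Let $X(t)$ be the trajectory with $\dot X(t) = H\theta(t, X(t))$ and $X(0) = x_0$. Since \eqref{eq:CCF} is a pure transport equation, $\theta(t, X(t)) \equiv \theta_0(x_0) =: M > 0$ for as long as the solution exists. By Lemma~\ref{lem:maxprinciple}, $\sup_{\RR} \theta(t, \cdot) \leq M$, and $X(t)$ saturates this bound, so $X(t)$ remains a global maximum of $\theta(t, \cdot)$; in particular $\theta_x(t, X(t)) = 0$. Set $\eta(t) := \Lambda \theta(t, X(t))$. Applying $\Lambda$ to $\partial_t \theta = -H\theta \, \theta_x$, commuting $\Lambda$ with $\partial_x$, and using the chain rule yields
\[
\dot \eta(t) = -\Lambda(H\theta \, \theta_x)(X(t)) + H\theta(X(t)) \, \Lambda \theta_x(X(t)).
\]

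Next, I apply Proposition~\ref{p:identity} (translated to the base point $X(t)$) with $f = H\theta(t, \cdot)$. Using the elementary Hilbert-transform identities $\Lambda H = \partial_x$, $(H\theta)' = H \theta_x = -\Lambda \theta$, and $(H\theta)'' = -\Lambda \theta_x$, and recalling that $\theta_x(t, X(t)) = 0$, the right-hand side of \eqref{eq:new:identity} collapses and the identity becomes
\[
\Lambda(H\theta \, \theta_x)(X(t)) - H\theta(X(t)) \, \Lambda \theta_x(X(t)) = -\frac{1}{2} \eta(t)^2 - \left\| \frac{H\theta(\cdot + X(t)) - H\theta(X(t))}{\cdot} \right\|_{\dot H^{1/2}}^2.
\]
Substituting this back into the formula for $\dot \eta$, the indefinite-sign terms $H\theta(X(t)) \, \Lambda \theta_x(X(t))$ cancel, the seminorm squared contributes with the favorable sign, and I obtain the clean Riccati-type inequality
\[
\dot \eta(t) \geq \frac{1}{2} \eta(t)^2.
\]

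To close the argument, I observe that $\eta(0) > 0$: the singular-integral formula gives $\Lambda \theta_0(x_0) = \pi^{-1} \, \text{p.v.} \int_\RR (\theta_0(x_0) - \theta_0(y))(x_0 - y)^{-2} \, dy$, and at a global maximum this integrand is non-negative everywhere, while the decay of $\theta_0$ at infinity together with $\theta_0(x_0) > 0$ makes it strictly positive on a set of positive measure. Hence $\eta(t) \to +\infty$ in finite time, which is incompatible with a global $C^1$ solution (for which $\Lambda \theta$ must remain at least pointwise finite along $X(t)$). The main technical obstacle I foresee is the regularity bookkeeping: Proposition~\ref{p:identity} requires enough smoothness to make each term in the computation pointwise meaningful (notably $\Lambda \theta_x(t, X(t))$), so a mild strengthening such as $C^{1, \alpha}$, or approximation by slightly smoothed (e.g.\ vanishing-viscosity) solutions followed by a limit argument, should be needed to make the chain-rule calculation and the use of the identity fully rigorous. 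All other ingredients are algebraic and are carried by the new identity itself.
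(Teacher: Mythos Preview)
Your proposal is correct and follows essentially the same route as the paper: track the global maximum along the characteristic $X(t)$, set $\eta(t)=\Lambda\theta(t,X(t))$, apply Proposition~\ref{p:identity} with $f=H\theta$ at the base point $X(t)$, and use $\theta_x(t,X(t))=0$ to reduce to the Riccati inequality $\dot\eta\geq \tfrac12\eta^2$ with $\eta(0)>0$. Your additional remarks---the explicit justification that $\Lambda\theta_0(x_0)>0$ via the singular-integral representation, and the flag that the pointwise use of $\Lambda\theta_x$ requires slightly more than $C^1$ regularity---are both accurate observations that the paper leaves implicit.
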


\begin{proof}
For $x_0$ be the point so that 
\[ \theta(0,x_0) = \max_{x \in \R} \theta(0,x).\]
Let us follow the flow
\begin{align*}
\dot X(t) &= H\theta(t,X(t)),\\
X(0) &= x_0.
\end{align*}
The value of $\theta$ is constant along the flow of the transport equation, therefore
\[ \theta(t,X(t)) = \max_{x \in \R} \theta(t,x).\]
In particular $\theta_x(t,X(t))=0$ and $\Lambda \theta(t,X(t))>0$.

Now we compute the evolution of $\Lambda \theta(t,X(t))$. We have
\begin{align*}
\partial_t \Lambda \theta(t,X(t)) &= \Lambda \theta_t + H\theta \ \Lambda \theta_x = -\Lambda [H\theta \theta_x] + H\theta \ \Lambda \theta_x.
\end{align*}
Note that $H\theta \theta_x = f \Lambda f$ and $\Lambda \theta_x = - f_{xx}$ for $f = H \theta$. We apply Proposition \ref{p:identity} and obtain.
\begin{align*}
\partial_t \Lambda \theta(t,X(t)) &= -\frac 12 (\theta_x)^2 + \frac 12 (\Lambda \theta)^2 + \| H\theta/x \|_{\dot H^{1/2}}^2 \geq \frac 12 (\Lambda \theta(t,X(t)))^2
\end{align*}
since $\theta_x(t,X(t))=0$.
This ODE for $\Lambda \theta(t,X(t))$ blows up in finite time, thereby concluding the proof.
\end{proof}

\begin{remark}
The previous proof can also be applied at any initial point $x_0$ which is a local max or min for which $\Lambda \theta(x_0,0)>0$. In particular it is possible to find an even initial condition $\theta_0$ which develops a singularity away from the origin.
\end{remark}

\begin{remark}
\label{rem:Lambda:lower:bound}
Note that the computation in the previous proof shows that for smooth solutions of \eqref{eq:CCF}, $\Lambda \theta$  obeys the PDE
\[ \partial_t \Lambda \theta + H\theta (\Lambda \theta)_x = -\frac 12 (\theta_x)^2 + \frac 12 (\Lambda \theta)^2 + \left\| \frac{ H\theta(\cdot)-H(x) }{\cdot-x} \right\|_{\dot H^{1/2}}^2. \]
The second and third term in the right hand side are positive, but the first one is negative. If the right hand side was non negative, it would imply that a lower bound on $\Lambda \theta$ is preserved by the flow.

It is currently not clear whether $\Lambda \theta$ remains bounded from below for positive time. This issue is addressed in Section~\ref{sec:Conjectures} below, in connection with a conjectured a priori estimate for $\theta$ in $C^{1/2}$ (see Conjecture~\ref{conj:Lambda:lower:bound}).
\end{remark}

\section{Singularity formation via telescoping sums}
\label{sec:Telescoping}

In this section we present our second proof that smooth solutions to \eqref{eq:CCF} cannot exist for all time  (Theorem~\ref{thm:telescope} below). The proof is based on a local in space lower bound for the nonlinearity, which is established in Lemma~\ref{lem:nonlinear:positivity}.

\subsection{An integral bound for the nonlinearity}
\begin{lemma}
\label{lem:pointwise:Hilbert}
Assume $\theta$ is even and decreasing away from the origin. Then we have
\begin{align}
 - H \theta(x_2) \geq \frac{1}{\pi} \log\left( \frac{x_1+x_2}{x_1 - x_2} \right) (\theta(x_2) - \theta(x_1) ) \geq 0
 \label{eq:pointwise:Hilbert}
\end{align}
for any $0<x_2<x_1$.
\end{lemma}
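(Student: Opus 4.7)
The plan is to start from the even-function formula \eqref{eq:Hilbert:even}, which in our setting gives
\[
-H\theta(x_2) = \frac{2 x_2}{\pi} \int_0^\infty \frac{\theta(x_2) - \theta(y)}{y^2 - x_2^2} \dd y.
\]
I would first observe that under the monotonicity assumption on $\theta$ the integrand has a definite sign: for $y > x_2$ both the numerator $\theta(x_2)-\theta(y)$ and the denominator $y^2 - x_2^2$ are non-negative, while for $0 \leq y < x_2$ both are non-positive, so their ratio is non-negative everywhere. In particular, dropping the part of the integral over $[0,x_1)$ only decreases the quantity, so
\[
-H\theta(x_2) \geq \frac{2 x_2}{\pi} \int_{x_1}^{\infty} \frac{\theta(x_2)-\theta(y)}{y^2 - x_2^2} \dd y.
\]

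Next, on the tail $y \geq x_1$, monotonicity of $\theta$ away from the origin gives $\theta(y) \leq \theta(x_1)$, hence
\[
-H\theta(x_2) \geq \frac{2 x_2}{\pi}\, \bigl(\theta(x_2) - \theta(x_1)\bigr) \int_{x_1}^{\infty} \frac{\dd y}{y^2 - x_2^2}.
\]
The remaining integral is elementary: using partial fractions,
\[
\int_{x_1}^{\infty} \frac{\dd y}{y^2 - x_2^2} = \frac{1}{2 x_2}\, \log\frac{y - x_2}{y + x_2}\bigg|_{x_1}^{\infty} = \frac{1}{2 x_2} \log\frac{x_1 + x_2}{x_1 - x_2},
\]
which, combined with the previous inequality, yields exactly the claimed lower bound
\[
-H\theta(x_2) \geq \frac{1}{\pi} \log\frac{x_1+x_2}{x_1-x_2}\, \bigl(\theta(x_2) - \theta(x_1)\bigr).
\]
The final non-negativity $\geq 0$ is immediate: $\log\frac{x_1+x_2}{x_1-x_2}>0$ since $0<x_2<x_1$, and $\theta(x_2)\geq \theta(x_1)$ by monotonicity.

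There is no real obstacle here; the only subtle point to verify carefully is the sign of the integrand across $y = x_2$, which is why I would explicitly split the analysis at that point before throwing away the $[0,x_1)$ contribution. Everything else is a one-line explicit computation.
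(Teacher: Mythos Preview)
Your proof is correct and follows exactly the same approach as the paper: start from the even-function formula \eqref{eq:Hilbert:even}, use the non-negativity of the integrand to restrict to $[x_1,\infty)$, apply monotonicity to replace $\theta(y)$ by $\theta(x_1)$, and evaluate the remaining explicit integral. The only difference is that you spell out the sign analysis of the integrand across $y=x_2$ more explicitly than the paper does.
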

\begin{proof}
Using \eqref{eq:Hilbert:even} and monotonicity we have the bound
\begin{align*}
- H\theta(x_2) &= \frac{2x_2}{\pi} \int_0^\infty \frac{\theta(x_2)-\theta(y)}{y^2-x_2^2} \dd y \geq \frac{2x_2}{\pi} \int_{x_1}^\infty \frac{\theta(x_2)-\theta(y)}{y^2-x_2^2} \dd y \notag\\
&\geq \frac{2x_2}{\pi} (\theta(x_2)-\theta(x_1) ) \int_{x_1}^\infty \frac{\dd y}{y^2-x_2^2}  = \frac{1}{\pi} (\theta(x_2)-\theta(x_1) ) \log\left( \frac{x_1+x_2}{x_1 - x_2} \right)
\end{align*}
and the proof is complete.
\end{proof}
The above estimate was previously used in~\cite{Kiselev10} in order to give a different proof of the inequality in~\cite{CordobaCordobaFontelos06}, which avoids the use of subtle complex analysis arguments.
The following Lemma yields a lower bound  for the nonlinear term in \eqref{eq:CCF}, which is local in nature.
\begin{lemma}
\label{lem:nonlinear:positivity}
Let $\theta$ be smooth, even, and decreasing away from the origin. Then we have
\begin{align}
\int_{x_2}^{x_1} H\theta \, \partial_x \theta \dd x \geq \frac{1}{4\pi} \log \frac{x_1+x_2}{x_1-x_2} (\theta(x_2)-\theta(x_1))^2
\label{eq:nonlinear:positivity}
\end{align}
for any $0<x_2<x_1$.
\end{lemma}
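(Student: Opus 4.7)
The plan is to reduce the claim directly to the pointwise Hilbert lower bound of Lemma \ref{lem:pointwise:Hilbert}, applied with $x_1$ fixed and the other variable moving across the interval of integration.

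First I would observe that on the interval $(x_2,x_1)\subset(0,\infty)$ the integrand is automatically non-negative: since $\theta$ is decreasing away from the origin, $\partial_x\theta\le 0$ on $(x_2,x_1)$; and since $\theta$ is even and strictly decreasing on $(0,\infty)$, the formula \eqref{eq:Hilbert:even} combined with the remark following it gives $H\theta<0$ at every positive point. Therefore $H\theta\,\partial_x\theta=(-H\theta)(-\partial_x\theta)\ge 0$ pointwise, and one is free to invoke any pointwise lower bound on the integrand without worrying about cancellation.

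Next, for each $x\in(x_2,x_1)$ I would apply Lemma \ref{lem:pointwise:Hilbert} with the pair $(x,x_1)$ in place of $(x_2,x_1)$ there, obtaining
\[
-H\theta(x)\;\ge\;\frac{1}{\pi}\log\frac{x_1+x}{x_1-x}\,\bigl(\theta(x)-\theta(x_1)\bigr).
\]
The logarithmic weight $x\mapsto\log\frac{x_1+x}{x_1-x}$ is increasing on $(0,x_1)$ (its derivative is $\tfrac{2x_1}{x_1^2-x^2}>0$), so for $x\ge x_2$ it is at least $\log\frac{x_1+x_2}{x_1-x_2}$, which is the weight that must appear on the right-hand side of \eqref{eq:nonlinear:positivity}. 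Inserting both inequalities into $\int_{x_2}^{x_1}(-H\theta)(-\partial_x\theta)\,dx$ yields
\[
\int_{x_2}^{x_1}H\theta\,\partial_x\theta\,dx\;\ge\;\frac{1}{\pi}\log\frac{x_1+x_2}{x_1-x_2}\int_{x_2}^{x_1}\bigl(\theta(x)-\theta(x_1)\bigr)\bigl(-\partial_x\theta(x)\bigr)\,dx.
\]

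Finally, the remaining integral is a perfect derivative: $\bigl(\theta(x)-\theta(x_1)\bigr)(-\partial_x\theta(x))=-\tfrac12\partial_x\bigl[(\theta(x)-\theta(x_1))^2\bigr]$, so it equals $\tfrac12\bigl(\theta(x_2)-\theta(x_1)\bigr)^2$ by the fundamental theorem of calculus. Combining gives the bound $\tfrac{1}{2\pi}\log\tfrac{x_1+x_2}{x_1-x_2}(\theta(x_2)-\theta(x_1))^2$, which is actually twice the claimed constant, so \eqref{eq:nonlinear:positivity} follows a fortiori. There is no real obstacle here: the only conceptual point is that Lemma \ref{lem:pointwise:Hilbert} a priori gives an $x$-dependent weight, and one must exploit monotonicity of that weight to replace it by the uniform lower value at $x_2$ before pulling it out of the integral.
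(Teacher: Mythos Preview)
Your proof is correct and in fact slightly sharper and cleaner than the paper's. Both arguments start from the same pointwise bound of Lemma~\ref{lem:pointwise:Hilbert} and both use that $x\mapsto\log\tfrac{x_1+x}{x_1-x}$ is increasing to pull out the weight at its smallest value $x_2$. The difference lies in how the remaining integral is handled: the paper introduces an intermediate point $x_3$ with $\theta(x_3)=\tfrac12(\theta(x_1)+\theta(x_2))$, bounds $\theta(x)-\theta(x_1)\ge\tfrac12(\theta(x_2)-\theta(x_1))$ on $(x_2,x_3)$, and then integrates only $-\partial_x\theta$ over that subinterval to recover another factor $\tfrac12(\theta(x_2)-\theta(x_1))$; this yields the constant $\tfrac1{4\pi}$. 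You instead keep the $x$-dependent factor $\theta(x)-\theta(x_1)$ over the full interval and recognize $(\theta(x)-\theta(x_1))(-\partial_x\theta)$ as a perfect derivative, obtaining $\tfrac1{2\pi}$. Your route is shorter, avoids the case split on the location of $x_3$, and doubles the constant; the paper's route has the (minor) feature that it never forms a product of two $\theta$-dependent factors inside the integral, which is why the authors note that no integration by parts is used.
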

\begin{proof}
Note that $H\theta \, \partial_x \theta \geq 0$ by our assumptions on $\theta$.
Choose a point $x_3 \in (x_2,x_1)$ such that $2 \theta(x_3) = \theta(x_1) + \theta(x_2)$. Without loss of generality we have $2 (x_3 - x_2) \leq x_1 - x_2$, as the other case can be treated similarly. 

Using the monotonicity of $\theta$ and applying the lower bound \eqref{eq:pointwise:Hilbert} to a point $x \in (x_3,x_1)$, we obtain
\[
- H \theta(x) 
\geq \frac{1}{\pi} ( \theta(x) - \theta(x_1) ) \log\left( \frac{x_1+x}{x_1- x} \right) 
\geq \frac{1}{2\pi}  (\theta(x_2)-\theta(x_1))\log \left( \frac{x_1+x_2}{x_1-x_2} \right).
\]
Now, since $-\theta_x \geq 0$, we obtain
\begin{align*}
\int_{x_2}^{x_1} H\theta \, \partial_x \theta \dd x 
\geq \int_{x_2}^{x_3} H\theta \, \partial_x \theta \dd x 
&\geq \frac{1}{2\pi}  (\theta(x_2)-\theta(x_1))\log \left( \frac{x_1+x_2}{x_1-x_2} \right) \int_{x_2}^{x_3} ( - \partial_x \theta) \dd x\notag\\
&= \frac{1}{4\pi}  (\theta(x_2)-\theta(x_1))^2 \log \left( \frac{x_1+x_2}{x_1-x_2} \right) 
\end{align*}
where we have also used the definition of $x_3$.
Note that in the proof we did not integrate by parts.
\end{proof}

In particular, the above Lemma is a local version of the identity
\[
\int_{\RR} \theta_x H\theta \dd x = \|\theta\|_{\dot{H}^{1/2}}^2 = c \int \! \! \! \int_{\RR} \frac{(\theta(x_1)-\theta(x_2))^2}{(x_1-x_2)^2} \dd x_1 \dd x_2\geq 0
\]
which follows from integration by parts.

\subsection{The inviscid case}

\begin{theorem}
\label{thm:telescope}
Let $\theta_0$ be even, non-negative, monotone decreasing on $(0,\infty)$. Then the initial value problem for  \eqref{eq:CCF} does not have a global in time $C^{1}_x$ smooth solution.
\end{theorem}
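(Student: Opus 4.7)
My plan is a proof by contradiction, extracting from the assumed global $C^1_x$ solution a bounded weighted moment whose time derivative is bounded below by a strictly positive constant, which is impossible on $[0,\infty)$. Let $M=\theta_0(0)$. Since $\theta_0$ is even and monotone decreasing on $(0,\infty)$, these symmetries propagate under the flow (Section~\ref{sec:Prelim}); in particular $H\theta(t,0)=0$, the characteristic at the origin is frozen, and $\theta(t,0)=M$ for all $t\geq 0$. For $x>0$, the last remark of Section~\ref{sec:Prelim} gives $H\theta(t,x)\leq 0$, and combined with $\partial_x\theta(t,x)\leq 0$ this yields $\partial_t\theta(t,x)=-H\theta\,\partial_x\theta\leq 0$, so $\theta(\cdot,x)$ is nonincreasing in time at each fixed $x>0$. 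Using the scaling invariance $\theta(t,x)\mapsto\theta(\lambda t,\lambda x)$ I may additionally assume $\theta_0(1)<M$, so that $M-\theta(t,1)\geq M-\theta_0(1)=:\delta>0$ uniformly in $t$.

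I would then introduce the weighted functional
\[
J(t):=\int_0^1 x^{-1/2}\bigl(M-\theta(t,x)\bigr)\,\dd x,
\]
which is finite and bounded by $2M$. From \eqref{eq:CCF},
\[
\dot J(t)=\int_0^1 x^{-1/2}\,H\theta\,\partial_x\theta\,\dd x\geq 0.
\]
Partitioning $(0,1)$ into the dyadic intervals $I_i=[2^{-i-1},2^{-i}]$, on which $x^{-1/2}\geq 2^{i/2}$, and setting $a_i(t):=\theta(t,2^{-i})$, I apply Lemma~\ref{lem:nonlinear:positivity} with $x_1=2^{-i}$ and $x_2=2^{-i-1}$ (so that $\log\tfrac{x_1+x_2}{x_1-x_2}=\log 3$) and sum to obtain
\[
\dot J(t)\;\geq\;\frac{\log 3}{4\pi}\sum_{i=0}^\infty 2^{i/2}(a_{i+1}-a_i)^2.
\]

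The crux is then a weighted Cauchy--Schwarz. Writing $\Delta_i:=a_{i+1}-a_i\geq 0$ (by monotonicity) and using the telescoping identity $\sum_{i\geq 0}\Delta_i=\lim_N a_N-a_0=M-\theta(t,1)$ (which relies on the assumed continuity of $\theta$ at the origin),
\[
\bigl(M-\theta(t,1)\bigr)^2=\Bigl(\sum_{i\geq 0}2^{-i/4}\cdot 2^{i/4}\Delta_i\Bigr)^2\leq\Bigl(\sum_{i\geq 0}2^{-i/2}\Bigr)\Bigl(\sum_{i\geq 0}2^{i/2}\Delta_i^2\Bigr),
\]
and since $\sum_{i\geq 0}2^{-i/2}=2+\sqrt 2$ and $M-\theta(t,1)\geq\delta$, one obtains $\dot J(t)\geq c\,\delta^2$ for an absolute constant $c>0$, contradicting $J(t)\leq 2M$ for $t$ sufficiently large. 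The main technical point, and the place where the argument is nontrivial, is the choice of the weight $x^{-1/2}$: its dyadic profile $2^{i/2}$ is exactly matched, via Cauchy--Schwarz against the convergent geometric series $\sum 2^{-i/2}$, to the telescoping difference $M-\theta(t,1)$. This is also where the \emph{local} (dyadic) lower bound \eqref{eq:new:nonlinearity:bound} provided by Lemma~\ref{lem:nonlinear:positivity} is essential; a merely global $\dot H^{1/2}$-type bound on $\theta$ would not suffice.
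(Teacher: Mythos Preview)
Your argument is correct and follows the same telescoping architecture as the paper's proof: a weighted functional on dyadic intervals, the local lower bound of Lemma~\ref{lem:nonlinear:positivity}, and a Cauchy--Schwarz step against a convergent geometric series. The differences are in how you close the argument. The paper integrates over all of $(0,\infty)$ with a two-piece weight $\eta$, writes $F$ itself as a telescoping sum via the antiderivative $\phi$ of $\eta$, and obtains the Riccati inequality $\dot F\geq cF^2$, which forces blowup of $F$. You instead restrict to $(0,1)$ with the single weight $x^{-1/2}$, and rather than comparing $\dot J$ to $J$ you compare it to the fixed gap $M-\theta(t,1)$; the extra ingredient that makes this work is the pointwise-in-time monotonicity $\partial_t\theta(t,x)\leq 0$ for $x>0$ (which the paper does not use in this section), guaranteeing that the gap stays bounded below by $\delta$. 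This buys you a simpler weight, no antiderivative $\phi$, and a linear-growth contradiction in place of an ODE blowup, at the cost of invoking the additional monotonicity-in-time observation. Both routes rest on the same essential idea, and your variant is a legitimate streamlining of it.
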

The idea of the proof  to use a weighted version of Lemma~\ref{lem:nonlinear:positivity} in a dyadic fashion. 
The proof works for {any} smooth initial datum $\theta_0$ that decays at infinity, and has $\theta_0(0)>0$.

\begin{proof}
Assume the ensuing solution $\theta$ of \eqref{eq:CCF} lies in $L^\infty(0,T;C^{1})$ for some $T>0$. Then $\theta(\cdot,t)$ is even, decreasing away from the origin and non-negative on $[0,T)$. The following computations are then justified on this time interval. We will arrive at a contradiction if $T$ is sufficiently large, which implies that the $C^{1}$ norm of the solution must blow up in finite time.

Consider the continuous function $\eta \colon (0,\infty) \to (0,\infty)$ defined by 
\begin{align}
\eta(x) = 
\begin{cases}
x^{-\alpha}&, 0 < x < 1, \\
x^{-2-\alpha}&, x>1,
\end{cases}
\label{eq:eta:def}
\end{align}
for some $\alpha \in (0,1)$. Then $\eta \in L^1(\RR)$, is even and monotonically decreases away from the origin. Define the Lyapunov functional 
\begin{align}
F(t) = F_{\theta_0}(t) 
= \int_0^\infty \eta(x) (\theta(0,t) -  \theta(x,t) ) \dd x \geq 0.
\label{eq:F:def}
\end{align}
Then, in view of the $L^\infty$ maximum principle of Lemma~\ref{lem:maxprinciple}, we have that
\begin{align}
F_{\theta_0}(t) \leq \|\theta_0\|_{L^\infty} \|\eta\|_{L^1} = \frac{\|\theta_0\|_{L^\infty}}{\alpha(1-\alpha)}.
\label{eq:F:apriori}
\end{align}
We will now use the equation \eqref{eq:CCF} to deduce that $F$ obeys an ODE which blows up in finite time, if $F(0) > 0$.
Differentiating in time, and using that as long $\theta$ remains H\"older continuous we must have $H \theta(0,t) =  0$, we obtain
\begin{align}
\frac{dF}{dt}(t) &= \int_0^\infty \eta(x) \partial_t (\theta(t,0) - \theta(t,x)) \dd x  \notag\\
&=  \int_0^\infty \eta(x) H\theta(t,x) \, \partial_x \theta(t,x) \dd x  - H\theta(t,0) \partial_x \theta(t,0) \int_0^\infty\eta(x) \dd x\notag\\
&=  \sum_{k \in \ZZ} \int_{2^{k}}^{2^{k+1}} \eta(x) H\theta(t,x) \, \partial_x \theta(t,x) \dd x.
\label{eq:F:dot:1}
\end{align}
We now appeal to Lemma~\ref{lem:nonlinear:positivity}  and obtain
\[
\int_{2^{k}}^{2^{k+1}} H\theta \, \partial_x \theta \dd x \geq \frac{\log 3}{4\pi} (\theta(2^{k}) - \theta(2^{k+1}) )^2.
\]
Combining the above estimate with \eqref{eq:F:dot:1} we obtain
\begin{align}
\frac{dF}{dt} (t) \geq  \frac{\log 3}{4\pi} \sum_{k\in \ZZ} \eta(2^{k+1})  (\theta(t,2^{k}) - \theta(t,2^{k+1}) )^2
\label{eq:dtF:lower}
\end{align}
where we have also used that $\eta$ is decreasing.

On the other hand we may write 
\[
\eta(x) = - \partial_x  \phi(x)
\]
for all $x\in (0,1) \cup (1,\infty)$ where
\[ \phi(x) = 
\begin{cases}
\frac{1}{1+\alpha} + \frac{1}{1-\alpha} (1- x^{1-\alpha}) &, 0 < x < 1, \\
\frac{1}{1+\alpha} x^{-1-\alpha}&, x>1.
\end{cases}
\] 
Note that $\phi \geq 0$ is monotone decreasing.
Therefore, we have
\begin{align}
F(t) &= - \int_0^\infty \partial_x \phi(x) (\theta(t,0) - \theta(t,x)) \dd x =  - \int_0^\infty \phi(x) \partial_x \theta(x,t) \dd x \notag \\
&=  \sum_{k \in \ZZ} \int_{2^k}^{2^{k+1}} \phi(x) (- \partial_x \theta(t,x)) \dd x  \leq \sum_{k \in \ZZ} \phi(2^{k}) (\theta(t,2^k)  - \theta(t,2^{k+1}))
\label{eq:F:upper:ancient}
\end{align}
where in the second to last line we have used that $\theta$ and $\phi$ are decreasing. At last, using the Cauchy-Schwartz inequality, it follows from \eqref{eq:F:upper:ancient} that 
\begin{align}
F(t) &\leq \sum_{k \in \ZZ} \phi(2^{k}) (\eta(2^{k+1}))^{-1/2} (\theta(t,2^k)  - \theta(t,2^{k+1}))  (\eta(2^{k+1}))^{1/2} \notag\\
&\leq \left( \sum_{k\in \ZZ} (\phi(2^{k}))^2 (\eta(2^{k+1}))^{-1} \right)^{1/2} \left( \sum_{k \in \ZZ} \eta(2^{k+1})  (\theta(t,2^k)  - \theta(t,2^{k+1}))^2 \right)^{1/2}.
\label{eq:F:upper}
\end{align}
In view of the choice in of $\eta$ (and thus $\phi$), we have that
\[
\sum_{k\in \ZZ} \frac{(\phi(2^{k}))^2}{ \eta(2^{k+1})} \leq \frac{2^{1+\alpha}}{1-\alpha^2} \sum_{k < 0} 2^{\alpha k} + \frac{2^{2+\alpha}}{(1+\alpha)^2} \sum_{k \geq 0} 2^{- \alpha k}  = c_\alpha
\]
where $c_\alpha>0$ is an explicitly computable constant. 

Therefore, by combining \eqref{eq:dtF:lower} with \eqref{eq:F:upper} we obtain that 
\begin{align}
 \frac{dF}{dt}(t) \geq \frac{\log 3}{4 \pi c_\alpha} (F(t))^2 
\label{eq:F:dot:2}
\end{align}
which implies a finite time blowup for $F$, since $F(0) > 0$, for {\em any} $\theta_0$ which is not a constant. This contradicts \eqref{eq:F:apriori} and thus completes the proof of the theorem.
\end{proof}

\subsection{The fractionally transport velocity case}
\begin{theorem}
\label{thm:telescope:drift}
Let $\theta_0$ be even, non-negative, monotone decreasing on $(0,\infty)$. Let $s \in (-1,1)$. Then the initial value problem for \eqref{eq:CCF:smooth:drift} cannot have a global in time $C^{1}_x$ smooth solution.
\end{theorem}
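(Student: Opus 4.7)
The plan is to replay the argument of Theorem \ref{thm:telescope} essentially verbatim, with three ingredients upgraded to account for the operator $\Lambda^s H$ in place of $H$: (i) an integral representation of $\Lambda^s H\theta$ for even $\theta$, (ii) the analog of the pointwise lower bound in Lemma \ref{lem:pointwise:Hilbert}, and (iii) a choice of dyadic weight $\eta$ whose tail is tuned to the scaling of $\Lambda^s H$.

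First I would derive the representation. Since $\Lambda H = \partial_x$ in the paper's sign convention, the Fourier multiplier of $\Lambda^s H$ is $i\,\sgn(\xi)|\xi|^s$, and its convolution kernel is (up to a positive constant $c_s$, with $c_0 = 1/\pi$) the principal-value distribution $\sgn(x)|x|^{-1-s}$. Symmetrizing as in \eqref{eq:Hilbert:even} and subtracting off the identically vanishing integral $p.v. \int_0^\infty [\sgn(y-x)|y-x|^{-1-s} - (y+x)^{-1-s}]\dd y$ (an explicit antiderivative), one obtains for $x > 0$
\begin{align*}
\Lambda^s H\theta(x) = c_s \int_0^\infty (\theta(y) - \theta(x))\left[\frac{\sgn(y-x)}{|y-x|^{1+s}} - \frac{1}{(y+x)^{1+s}}\right]\dd y.
\end{align*}
A case-by-case sign check on $\{y>x\}$ and $\{y<x\}$ shows that the integrand is non-positive whenever $\theta$ is even and monotone decreasing on $(0,\infty)$, so $u = \Lambda^s H\theta \le 0$ on $(0,\infty)$ uniformly in $s \in (-1,1)$, matching the sign in the $s=0$ case.

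Next I would establish the analog of Lemma \ref{lem:pointwise:Hilbert}. Truncating the representation to $y > x_1 > x_2 > 0$ and using $\theta(y) \leq \theta(x_1)$ yields
\begin{align*}
-\Lambda^s H\theta(x_2) \geq c_s\, g_s(x_1, x_2)\,(\theta(x_2) - \theta(x_1)), \quad g_s(x_1, x_2) := \frac{(x_1-x_2)^{-s} - (x_1+x_2)^{-s}}{s},
\end{align*}
where the explicit antiderivative defines $g_s$, and $g_s \to \log\frac{x_1+x_2}{x_1-x_2}$ as $s \to 0$. Substituting this into the midpoint argument of Lemma \ref{lem:nonlinear:positivity} (and using that $g_s(x_1, \cdot)$ is monotone on $(0, x_1)$) upgrades it to the integrated lower bound
\begin{align*}
\int_{x_2}^{x_1} \Lambda^s H\theta \cdot \partial_x \theta \,\dd x \geq \frac{c_s}{4}\, g_s(x_1, x_2)\,(\theta(x_2) - \theta(x_1))^2.
\end{align*}

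With these bounds, the rest is a direct re-run of the proof of Theorem \ref{thm:telescope}, with the scale-adjusted weight
\[
\eta(x) =
\begin{cases}
x^{-\alpha} &, 0 < x < 1, \\
x^{-(2+s+\alpha)} &, x > 1,
\end{cases}
\]
for some $\alpha \in (\max(0, -s), 1)$. Evaluating on dyadic pairs gives $g_s(2^{k+1}, 2^k) = C_s \, 2^{-ks}$ for a universal $C_s > 0$, and the additional $2^{-ks}$ factor is absorbed in the Cauchy-Schwarz step: the sum $\sum_{k \in \ZZ} \phi(2^k)^2/(\eta(2^{k+1})\,2^{-ks})$, with $\phi$ the tail of $\eta$, is finite precisely under the constraints $\alpha > -s$ (convergence at $k \to -\infty$) and $2 + s + \alpha > 2+s$ (convergence at $k \to +\infty$). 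The resulting ODE $\dot F \geq c_{s, \alpha}\, F(t)^2$ for $F(t) = \int_0^\infty \eta(x)(\theta(t,0) - \theta(t,x))\,\dd x$ forces blowup of $F$ in finite time from any nontrivial initial datum, contradicting the a priori bound $F(t) \leq \|\theta_0\|_{L^\infty}\|\eta\|_{L^1}$.

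The main obstacle is ingredients (i)--(ii): establishing the integral representation of $\Lambda^s H\theta$ with a sign-definite kernel uniformly across the full range $s \in (-1, 1)$ (handling the principal value for $s \in [0,1)$ and absolute convergence for $s \in (-1,0)$), together with the dyadic scaling $g_s(2^{k+1}, 2^k) \sim 2^{-ks}$ that dictates the tail exponent $2+s+\alpha$ of $\eta$ (as opposed to $2+\alpha$ in the case $s=0$). Once these are in place, the telescoping computation balances term-by-term against the $L^\infty$ maximum principle and yields the blowup exactly as in Theorem \ref{thm:telescope}.
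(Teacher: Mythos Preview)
Your proposal is correct and follows essentially the same route as the paper: derive the analog of Lemma~\ref{lem:pointwise:Hilbert} for $\Lambda^s H$, feed it into the midpoint argument to get the quadratic lower bound on dyadic blocks (your $g_s(2^{k+1},2^k)\sim 2^{-ks}$ matches the paper's Lemma~\ref{lem:nonlinear:positivity:drift}), and then rerun the Cauchy--Schwarz telescoping of Theorem~\ref{thm:telescope}. The only cosmetic difference is that you modify the far-field exponent of $\eta$ to $2+s+\alpha$, whereas the paper keeps the original weight \eqref{eq:eta:def} unchanged and simply absorbs the extra $2^{-ks}$ into the summability condition, arriving at the constraint $\alpha>|s|$ rather than your $\alpha>\max(0,-s)$; both ranges are nonempty for $s\in(-1,1)$, so either choice closes the argument.
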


Before giving the proof of the above statement we note that estimate \eqref{eq:nonlinear:positivity} has a direct analogue for the case of a general drift velocity $H \Lambda^s \theta$. 
\begin{lemma}\label{lem:nonlinear:positivity:drift}
 Under the assumptions of Lemma~\ref{lem:nonlinear:positivity}, when $s\in (-1,1)$ we have
 \begin{align}
\int_{x_2}^{x_1} H \Lambda^s \theta \, \partial_x \theta \dd x \geq \frac{1-s}{16s} \left( 1 - \frac{(x_1-x_2)^s}{(x_1+x_2)^s} \right) \frac{(\theta(x_2)-\theta(x_1))^2}{(x_1-x_2)^s}
\label{eq:nonlinear:positivity:drift}
\end{align}
for any $0<x_2<x_1$.
\end{lemma}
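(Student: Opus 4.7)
The plan is to follow the proof of Lemma~\ref{lem:nonlinear:positivity} line by line, with Lemma~\ref{lem:pointwise:Hilbert} replaced by an $s$-dependent pointwise lower bound for $-H\Lambda^s\theta$.

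For the pointwise step, in the range $s\in(0,1)$, exploit the Fourier-multiplier identity $H\Lambda^s=\Lambda^{s-1}\partial_x$ (both have symbol $i\,\mathrm{sgn}(\xi)|\xi|^s$). Since $\Lambda^{s-1}$ is a positive Riesz potential with kernel $|\cdot|^{-s}/(2\Gamma(1-s)\sin(\pi s/2))$ and $\theta'$ is odd, the reflection identity $\Gamma(s)\Gamma(1-s)=\pi/\sin(\pi s)$ gives
\[
-H\Lambda^s\theta(x)=\frac{\Gamma(s)\cos(\pi s/2)}{\pi}\int_0^\infty(-\theta'(y))\left[\frac{1}{|x-y|^s}-\frac{1}{(x+y)^s}\right]dy.
\]
The bracketed kernel vanishes at $y=0$ and at $y=\infty$ and its $y=x$ singularity is integrable against $\theta'$, so integrating by parts and splitting at $y=x$ yields
\[
-H\Lambda^s\theta(x)=\frac{s\,\Gamma(s)\cos(\pi s/2)}{\pi}\biggl[\int_0^x(\theta(y)-\theta(x))\bigl((x-y)^{-s-1}+(x+y)^{-s-1}\bigr)dy+\int_x^\infty(\theta(x)-\theta(y))\bigl((y-x)^{-s-1}-(y+x)^{-s-1}\bigr)dy\biggr],
\]
with both integrands pointwise non-negative by the monotonicity hypothesis on $\theta$. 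Discarding the $(0,x)$-piece, restricting the $(x,\infty)$-piece to $y\ge x_1$, using $\theta(x_2)-\theta(y)\ge \theta(x_2)-\theta(x_1)$, and computing the elementary $y$-integral produces the analog of Lemma~\ref{lem:pointwise:Hilbert}:
\[
-H\Lambda^s\theta(x_2)\ge\frac{\Gamma(s)\cos(\pi s/2)}{\pi}(\theta(x_2)-\theta(x_1))(x_1-x_2)^{-s}\left[1-\left(\frac{x_1-x_2}{x_1+x_2}\right)^{\!s}\right].
\]

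Next I would mimic the $x_3$-argument from the proof of Lemma~\ref{lem:nonlinear:positivity} verbatim: choose $x_3\in(x_2,x_1)$ with $2\theta(x_3)=\theta(x_1)+\theta(x_2)$, assume without loss of generality $x_3-x_2\le(x_1-x_2)/2$, and apply the pointwise bound at each $x\in(x_2,x_3)$ with far point $x_1$. The monotonicity of $x\mapsto(x_1-x)^{-s}-(x_1+x)^{-s}$ on $(x_2,x_1)$ combined with $\theta(x)-\theta(x_1)\ge\tfrac12(\theta(x_2)-\theta(x_1))$ produces a uniform lower bound, which integrated against $-\theta_x\ge0$ on $(x_2,x_3)$ (using $\int_{x_2}^{x_3}(-\theta_x)\,dx=\tfrac12(\theta(x_2)-\theta(x_1))$) yields
\[
\int_{x_2}^{x_1}H\Lambda^s\theta\,\partial_x\theta\,dx\ge\frac{\Gamma(s)\cos(\pi s/2)}{4\pi}(\theta(x_2)-\theta(x_1))^2(x_1-x_2)^{-s}\left[1-\left(\frac{x_1-x_2}{x_1+x_2}\right)^{\!s}\right].
\]
To match the announced prefactor $(1-s)/(16s)$, verify the elementary inequality $\Gamma(s)\cos(\pi s/2)/\pi\ge(1-s)/(4s)$ on $(0,1)$, i.e.\ $4\Gamma(s+1)\cos(\pi s/2)\ge\pi(1-s)$: this follows from the concavity bound $\cos(\pi s/2)=\sin(\pi(1-s)/2)\ge 1-s$ on $[0,1]$ (chord under concave sine) together with $\min_{[1,2]}\Gamma>0.88>\pi/4$.

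The main obstacle is the case $s\in(-1,0)$, where $\Lambda^{s-1}$ is no longer a standard one-dimensional Riesz potential. There one instead uses $H\Lambda^s\theta=\Lambda^s(H\theta)$, with $\Lambda^s$ of order $-s\in(0,1)$ acting as a positive Riesz potential on the odd function $H\theta\le0$ on $(0,\infty)$; the same two-step scheme --- integration by parts in $y$ followed by the $x_3$-splitting in $x$ --- then applies, with both the sign of $(1-s)/s$ and the sign of the bracket $1-((x_1-x_2)/(x_1+x_2))^s$ flipping so that the product on the right-hand side remains positive, consistent with $-H\Lambda^s\theta\ge 0$ on the left. The analogous gamma-function/trigonometric inequality needed in this range is $2\Gamma(-s)\cos(\pi s/2)\le -16s/(1-s)$, handled by the same concavity of $\sin$ and boundedness of $\Gamma$. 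A secondary technical point, common to both regimes and shared with Lemma~\ref{lem:pointwise:Hilbert}, is verifying that the principal-value cancellation at $y=x$ in the integration by parts truly leaves a non-negative integrand; this is the same cancellation that appeared in the $s=0$ case.
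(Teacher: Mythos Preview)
Your treatment of $s\in(0,1)$ is correct and is essentially the paper's proof: both routes produce the kernel representation
\[
H\Lambda^s\theta(x)=\frac{s}{c_s}\,P.V.\!\int_{\RR}\frac{\theta(y)-\theta(x)}{(y-x)|y-x|^s}\,dy,
\]
(your prefactor $\Gamma(s+1)\cos(\pi s/2)/\pi$ is exactly the paper's $s/c_s$), restrict the nonnegative integrand to $y>x_1$ to obtain the pointwise bound \eqref{eq:pointwise:Hilbert:s}, and then run the $x_3$--argument of Lemma~\ref{lem:nonlinear:positivity} verbatim. Your verification $4\Gamma(s+1)\cos(\pi s/2)\geq\pi(1-s)$ is precisely the paper's assertion $s/((1-s)c_s)\geq 1/4$.

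The gap is your separate handling of $s\in(-1,0)$. First, the claim that ``$\Lambda^{s-1}$ is no longer a standard one-dimensional Riesz potential'' is wrong: for every $s\in(-1,1)$ one has $s-1\in(-2,0)$ and $\Lambda^{s-1}$ is convolution with $c_s^{-1}|x|^{-s}$, which is exactly what the paper uses (and what your own derivation uses) to obtain the kernel formula above, uniformly in $s$. Second, your proposed detour through $\Lambda^s(H\theta)$ followed by ``integration by parts in $y$'' does not return a formula in $\theta$: differentiating $H\theta$ produces $-\Lambda\theta$, not $\theta$. Third, the inequality you write down for this range, $2\Gamma(-s)\cos(\pi s/2)\le -16s/(1-s)$, is false --- as $s\to 0^-$ the left side diverges while the right side tends to $0$.

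No detour is needed. Your own formula for $s>0$ already covers $s\in(-1,0)$: the prefactor $\Gamma(s+1)\cos(\pi s/2)/\pi$ is positive on all of $(-1,1)$, both split integrands remain nonnegative for $s<0$ (since $-s-1\in(-1,0)$ still gives $(y-x)^{-s-1}>(y+x)^{-s-1}$), and the elementary integral $\int_{x_1}^\infty\bigl((y-x_2)^{-s-1}-(y+x_2)^{-s-1}\bigr)dy=\tfrac{1}{s}\bigl((x_1-x_2)^{-s}-(x_1+x_2)^{-s}\bigr)$ is positive for all $s\neq 0$. The only thing that does \emph{not} extend is your slick verification of $4\Gamma(s+1)\cos(\pi s/2)\geq\pi(1-s)$ via $\sin(\pi(1-s)/2)\geq 1-s$, which fails once $1-s>1$; the paper simply asserts $1/4\leq s/((1-s)c_s)\leq 1/2$ on $(-1,1)$ as an explicit check.
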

\begin{proof}
 The proof is the same as the one of Lemma~\ref{lem:nonlinear:positivity}, except that instead of the pointwise estimate \eqref{eq:pointwise:Hilbert}, we use the bound
 \begin{align}
 - H \Lambda^s \theta(x_2) \geq \frac{(1-s)}{4 s} \left( 1 - \frac{(x_1-x_2)^s}{(x_1+x_2)^s} \right) \frac{\theta(x_2) - \theta(x_1)}{(x_1-x_2)^s}\geq 0
 \label{eq:pointwise:Hilbert:s}
\end{align}
which holds for $0 \neq s\in (-1,1)$ and any $0<x_2<x_1$.
Note that in the limit $s\to 0$, be bound \eqref{eq:pointwise:Hilbert:s} is consistent with estimate \eqref{eq:pointwise:Hilbert}.

In order to prove \eqref{eq:pointwise:Hilbert:s}, recall that $H \Lambda^s \theta = \partial_x \Lambda^{s-1} \theta$. Since $s-1<0$, the operator $\Lambda^{s-1}$ is given by convolution with the Riesz potential
\[
\Lambda^{s-1} \theta(x) = \frac{1}{c_s} \int_{\RR} \frac{\theta(y)}{|y-x|^{s}} \dd y
\]
where $c_s = \sqrt{\pi} 2^{1-s} \Gamma((1-s)/2) / \Gamma(s/2)$.
Therefore we have
\[
H \Lambda^s \theta(x) = \frac{s}{c_s} P.V. \int_{\RR} \frac{\theta(y)}{(y-x)|y-x|^s} \dd y = \frac{s}{(1-s)c_s} (1-s) \int_{\RR} \frac{\theta(y)-\theta(x)}{(y-x)|y-x|^s} \dd y.
\]
One may verify explicitly that 
\[
\frac{1}{4} \leq \frac{s}{(1-s)c_s} \leq \frac 12
\]
for $s\in(-1,1)$, and that the dependence on $s$ is monotone increasing.

The proof now follows just as the proof of Lemma~\ref{lem:pointwise:Hilbert}. Under the standing assumptions on $\theta$ we have
\begin{align*}
 - H \Lambda^s \theta(x_2) 
 &\geq \frac{1-s}{4} (\theta(x_2) - \theta(x_1)) \int_{x_1}^{\infty} \frac{1}{(y-x_2)^{s+1}} - \frac{1}{(y+x_2)^{s+1}} \dd y.
\end{align*}
The integral on the right side of the above converges for $s\in (-1,1)$ and after a direct computation we obtain
\[
- H \Lambda^s \theta(x_2)  \geq \frac{1-s}{4s} (\theta(x_2) - \theta(x_1)) \left( \frac{1}{(x_1-x_2)^{s}} - \frac{1}{(x_1+x_2)^s} \right)
\]
which concludes the proof.
\end{proof}

\begin{proof}[Proof of Theorem~\ref{thm:telescope:drift}]
The proof is a slight modification of the proof of Theorem~\ref{thm:telescope}.
Note that the monotonicity and symmetry properties of the solution are maintained by the flow \eqref{eq:CCF:smooth:drift} in view of its transport nature. Consider the function $\eta(x)$ defined in \eqref{eq:eta:def}, and let $F(t)$ be defined by \eqref{eq:F:def}.
In view of Lemma~\ref{lem:nonlinear:positivity:drift}, we have that 
\[
\int_{2^k}^{2^{k+1}} H\Lambda^s \theta \partial_x \theta \dd x 
\geq \frac{1-s}{16s} 
\left( 1 - 3^{-s} \right) 2^{-ks} (\theta(2^k)-\theta(2^{k+1}))^2,
\]
so that similarly to \eqref{eq:dtF:lower}, we have
\[
\frac{dF}{dt}(t) \geq c_s \sum_{k \in \ZZ} \eta(2^{k+1}) 2^{-ks} (\theta(2^k)-\theta(2^{k+1}))^2
\]
for some constant $c_s >0$ that depends only on $s\in (-1,1)$.
In view of the bound \eqref{eq:F:upper:ancient}, the proof of the Theorem is completed once we establish that
\begin{align}
\sum_{k\in \ZZ} \frac{( \phi(2^k) )^2}{\eta(2^{k+1}) 2^{-ks}} < \infty
\label{eq:telescope:s:to:check}
\end{align}
for some $\alpha \in (0,1)$. In view of the definitions of $\phi$ and $\eta$, the estimate \eqref{eq:telescope:s:to:check} follows from
\[
\sum_{k<0} 2^{(\alpha+s) k} + \sum_{k\geq 0} 2^{(-\alpha+s)k} < \infty
\]
which holds once we let 
\[
\alpha > |s|
\]
which is consistent with $\alpha \in (0,1)$.
\end{proof}

\subsection{The fractionally dissipative case}

We now show how the telescopic sum argument can also be used to prove the emergence of singularities in finite time for a problem with fractional diffusion. The proof is in the same spirit as the proofs given in~\cite{LiRodrigo08} and \cite{Kiselev10,DongLi12}, but as in the inviscid case we avoid using the integral inequality of~\cite{CordobaCordobaFontelos06}.

The symmetry and monotonicity of the function $\theta$ plays an important role in this proof. It is easy to see that if the initial value $\theta_0$ is even and non negative, the solution $\theta(\cdot,t)$ will stay even and non negative for all values of $t>0$. It is also true that if $\theta_0$ is monotone decreasing away from the origin, the same property holds for $\theta(\cdot,t)$ for all $t>0$. The preservation of this property is perhaps the least obvious one given the influence of the fractional diffusion. {This fact was already presented in \cite[Lemma 6.3]{Kiselev10}. Since the proof in~\cite{Kiselev10}  discusses only the case where the sign of $\theta_x$ is lost away from the origin, for the sake of completeness we give here the proof of this fact, which we state in the following lemma.}

\begin{lemma}
\label{lem:fractional:good:properties}
Assume $\theta_0$ is even, smooth, and non increasing away from the origin. Let $\theta$ be a smooth, decaying at infinity
solution to the problem \eqref{eq:CCF:dissipative}. Then $\theta(\cdot,t)$ is even and non increasing away from the origin for any $t>0$.
\end{lemma}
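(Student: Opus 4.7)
The plan is to split the argument into preservation of evenness and preservation of non-increasingness on $(0,\infty)$, in that order. Both parts rely on the fact that the equation is invariant under reflections that flip the Hilbert transform in the right way, combined with uniqueness of smooth decaying solutions.

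For evenness, I would set $\tilde\theta(t,x) := \theta(t,-x)$ and check by a direct change of variables that $H[\theta(t,-\cdot)](x) = -H\theta(t,-x)$, while $\Lambda^\gamma$ commutes with the reflection $x \mapsto -x$. These identities combine to show $\tilde\theta$ satisfies the same equation \eqref{eq:CCF:dissipative} as $\theta$, and since $\tilde\theta(0,x) = \theta_0(-x) = \theta_0(x)$, uniqueness in the smooth decaying class forces $\tilde\theta \equiv \theta$.

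With evenness in hand, $v := \partial_x\theta$ is odd in $x$ and obeys
\[
\partial_t v + (H\theta)\, v_x + (Hv)\, v + \Lambda^\gamma v = 0.
\]
The goal is $v(t,\cdot) \leq 0$ on $(0,\infty)$ for all $t>0$. I would argue by contradiction: if this fails, a continuity argument produces a first time $t_0 > 0$ and a point $y_0 > 0$ at which $v(t_0,y_0)=0$ is an interior maximum of $v(t_0,\cdot)$ on $(0,\infty)$, while $v(s,\cdot) \leq 0$ on $(0,\infty)$ for every $s \leq t_0$. At such a point, $v_x(t_0,y_0)=0$ and the term $(Hv)v$ vanishes because $v(t_0,y_0)=0$, so the equation collapses to $\partial_t v(t_0,y_0) = -\Lambda^\gamma v(t_0,y_0)$. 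Pairing the $y>0$ and $y<0$ contributions in the singular integral representation of $\Lambda^\gamma v(y_0)$ via the oddness of $v$ yields
\[
\Lambda^\gamma v(t_0,y_0) = -c_\gamma \int_0^\infty v(t_0,y)\left[\frac{1}{|y_0-y|^{1+\gamma}} - \frac{1}{(y_0+y)^{1+\gamma}}\right] dy \;\geq\; 0,
\]
since the bracketed kernel is nonnegative on $(0,\infty)$ and $v(t_0,y) \leq 0$ there. Hence $\partial_t v(t_0,y_0) \leq 0$, which together with the one-sided inequality $\partial_t v(t_0,y_0) \geq 0$ forced by the first-touching property leaves only the equality case; but then the integral above must vanish, which for smooth $v$ requires $v(t_0,\cdot) \equiv 0$ on $(0,\infty)$, making $\theta(t_0,\cdot)$ constant and, by decay at infinity, identically zero, after which the monotonicity is trivially preserved.

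The main technical obstacle I anticipate is making the 'first touching' picture rigorous when the initial monotonicity is not strict, so that $v_0$ already vanishes at some interior points of $(0,\infty)$ and a sharp threshold time does not obviously exist. I would handle this by the standard perturbation trick of applying the argument to $v_\epsilon := v - \epsilon t$ on $(0,\infty)$ and letting $\epsilon \to 0$. The quantitative positive contribution to $\Lambda^\gamma v$ at a touching point of $v_\epsilon$, coming from the oddness pairing above, is strong enough to absorb the lower-order terms $-\epsilon - (Hv_\epsilon)\epsilon t_0$ produced by the subtraction, and then the $\epsilon \to 0$ limit recovers $v \leq 0$ on $(0,\infty)$ for all $t > 0$.
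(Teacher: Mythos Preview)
Your treatment of evenness matches the paper's, and for monotonicity you correctly isolate the key mechanism: $v=\theta_x$ is odd, so at a maximum of $v$ on $(0,\infty)$ the pairing of $y$ with $-y$ in the singular integral forces $\Lambda^\gamma v\geq 0$. The gap is in the perturbation step.

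Touching at the level $0$ is attractive precisely because the zeroth-order term $(Hv)\,v=-\Lambda\theta\cdot v$ then vanishes for free. But your perturbation $v_\epsilon=v-\epsilon t$ shifts the touching value to $v(t_0,y_0)=\epsilon t_0>0$, and this term no longer drops out. At the touching point the equation gives
\[
\partial_t v_\epsilon(t_0,y_0)=\Lambda\theta(t_0,y_0)\,\epsilon t_0-\Lambda^\gamma v(t_0,y_0)-\epsilon,
\]
and the only quantitative lower bound your oddness pairing produces for $\Lambda^\gamma v$ is of size $C_\gamma\,\epsilon t_0/y_0^{\gamma}$, coming from the term $2c\,v(y_0)\int_0^\infty(y_0+y)^{-1-\gamma}\,dy$. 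This does \emph{not} dominate $\|\Lambda\theta\|_{L^\infty}\,\epsilon t_0$ once $y_0$ is moderately large, so the claimed absorption fails and no contradiction results.

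The paper avoids this by comparing $\theta_x$ with the barrier $\epsilon e^{At}$, where $A=2\sup_{t\leq t_0}\|\Lambda\theta(t,\cdot)\|_{L^\infty}$ is chosen exactly so that the time derivative of the barrier, $A\epsilon e^{At_1}$, swallows the zeroth-order term $|\Lambda\theta|\cdot\theta_x\leq\tfrac{A}{2}\epsilon e^{At_1}$ at the first crossing $(t_1,x_1)$. The strict positivity of this barrier also guarantees $t_1>0$ and $x_1\in(0,\infty)$ without any equality case to analyze. Replacing your linear-in-$t$ perturbation by this exponential barrier repairs the argument.
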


\begin{proof}
The evenness of $\theta$ is conserved due to invariance under $x\mapsto -x$ of the equation. The more delicate part is to prove that $\partial_x \theta(t,x) \leq 0$ for all $t \in [0,T)$ and all $x\geq 0$. Note that in view of the 
evenness of $\theta(t,\cdot)$, we have that $\partial_x \theta(t,0) =0$. 

Assume by contradiction that for some $t_0 \in [0,T)$ and some $x_0 \in (0,\infty)$ we have that $\partial_x\theta(t_0,x_0) > 0$. Fix $A = 2 \sup_{t \leq t_0} \|\Lambda \theta(t,\cdot)\|_{L^\infty}$ and pick $\eps>0$ sufficiently small so that $\partial_x\theta(t_0,x_0) > \eps e^{At_0}$.

The derivative $\theta_x = \partial_x \theta$ satisfies the equation
\begin{align}
\partial_t (\theta_x) + H\theta \, \partial_x (\theta_x) - \Lambda \theta \, (\theta_x) + \Lambda^\gamma (\theta_x) = 0.
\label{eq:theta:x:evolution}
\end{align}
Let $(t_1,x_1) \in (0,\infty)\times(0,\infty)$ be the first crossing point between the functions $\theta_x(t,x)$ and $\eps e^{At}\sgn(x)$. That means that $\theta_x(t,x) < \eps e^{At}$ for all $x \in [0,\infty)$ and $t < t_1$, but that $\theta_x(t_1,x_1) = \eps e^{At_1}$. This point must exist because we know that $\theta_x(0,x) < \eps$ for $x\geq 0$, and by assumption $\theta_x(t_0,x_0) > \eps e^{At_0}$. Moreover, $\theta_x(t,0)=0$ for all $t$ and $\theta_x$ is continuous, so that $\theta_x(t,x)< \eps e^{At}$ when $x$ is too close to the origin. This ensures that $x_1>0$. Finally, we assume $\theta_x(t,x) \to 0$ as $x \to \infty$ (this decay assumption is not strictly necessary, but it makes the proof easier).

Now we evaluate the equation for $\theta_x$ at the point $(t_1,x_1)$ and obtain a contradiction. By the minimality of $t_1$ we must have $\partial_t(\theta_x)(t_1,x_1) \geq \partial_t(\eps e^{At})_{|{t=t_1}} = A\eps e^{At_1}$. Note that $\theta_x(t_1,\cdot)$ achieves its maximum on $[0,\infty)$ at $x=x_1$, which yields that $\partial_x(\theta_x)(t_1,x_1) = 0$. Moreover, when we compute $\Lambda^\gamma(\theta_x(t_1,x_1))$. Since $\theta_x$ is odd we have
\begin{align*}
\Lambda^\gamma \theta_x(t_1,x_1) 
&= c P.V.  \int_{\RR} \frac{\theta_x(t_1,x_1) - \theta_x(t_1,y)}{|x_1-y|^{1+\gamma}} \dd y, \\
&= c P.V.  \int_{0}^\infty (\theta_x(t_1,x_1) - \theta_x(t_1,y) ) \left(\frac{1}{|x_1-y|^{1+\gamma}}  - \frac{1}{|x_1+y|^{1+\gamma}} \right) \dd y.
\end{align*}
By assumption we have $\theta_x(t_1,x_1)- \theta_x(t_1,y) \geq 0$ for all $y\geq 0$, and clearly  $|x_1+y| > |x_1-y|$ for all $x_1 >0$ and $y\geq 0$. Thus, $\Lambda^\gamma (\theta_x) (t_1,x_1) > 0$.
To summarize, we have that
\begin{align*}
\partial_t(\theta_x)(t_1,x_1) &\geq  A \eps e^{-At_1},\\
H\theta \, \partial_x(\theta_x)(t_1,x_1) &= 0, \\
\Lambda^\gamma (\theta_x) (t_1,x_1) &> 0.
\end{align*}
Recalling the choice $A  = 2 \sup_{t \leq t_0} \|\Lambda \theta(t,\cdot)\|_{L^\infty}$ and that $\theta_x(t_1,x_1) = \eps e^{-At_1}$, we obtain a contradiction with the equation \eqref{eq:theta:x:evolution} at the point $(t_1,x_1)$.
\end{proof}

\begin{theorem}
\label{thm:telescope:dissipative}
Let $\theta_0$ be even, non-negative, monotone decreasing on $(0,\infty)$. Let $\gamma \in (0,1/2)$. Then the initial value problem for \eqref{eq:CCF:dissipative} cannot have a global in time $C^{1}_x$ smooth solution.
\end{theorem}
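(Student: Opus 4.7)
The plan is to modify the telescoping-sum argument of Theorem \ref{thm:telescope} to account for the fractional diffusion. By Lemma \ref{lem:fractional:good:properties}, the symmetry, monotonicity, and non-negativity of $\theta_0$ are preserved by \eqref{eq:CCF:dissipative}, and the maximum principle still holds, so we may use the same Lyapunov functional $F(t)$ from \eqref{eq:F:def} with weight \eqref{eq:eta:def}, giving the a priori bound $F(t) \leq \|\theta_0\|_{L^\infty}\|\eta\|_{L^1}$ to be contradicted. We take $\alpha \in (2\gamma,1)$, a range which is nonempty precisely because $\gamma<1/2$.

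Differentiating $F$ along \eqref{eq:CCF:dissipative} and using the symmetry-induced cancellations $H\theta(t,0) = \partial_x\theta(t,0) = 0$ yields $\dot F(t) = \mathcal{N}(t) + \mathcal{D}(t)$, where $\mathcal{N} = \int_0^\infty \eta(x) H\theta\,\partial_x\theta\, dx$ is the inviscid nonlinear term and $\mathcal{D} = \int_0^\infty \eta(x)[\Lambda^\gamma\theta(t,x) - \Lambda^\gamma\theta(t,0)]\,dx$ is the new dissipative contribution. From the proof of Theorem \ref{thm:telescope} we retain both $\mathcal{N} \geq c_\alpha F^2$ and the finer dyadic lower bound $\mathcal{N} \geq C\sum_{k\in\ZZ}\eta(2^{k+1})\delta_k(t)^2$, where $\delta_k := \theta(t,2^k)-\theta(t,2^{k+1}) \geq 0$.

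To control $\mathcal{D}$, use evenness to write $\mathcal{D} = \tfrac12\int_\RR \eta\,\Lambda^\gamma\theta\,dx - \Lambda^\gamma\theta(t,0)\int_0^\infty \eta\,dx$. By Plancherel duality and the $L^\infty$ bound, $|\int_\RR\eta\,\Lambda^\gamma\theta\,dx| \leq \|\Lambda^\gamma\eta\|_{L^1}\|\theta_0\|_{L^\infty}$, which is finite provided $\alpha+\gamma<1$. For the pointwise term $\Lambda^\gamma\theta(t,0) = 2c_\gamma\int_0^\infty(\theta(t,0)-\theta(t,y))y^{-1-\gamma}dy$, we split at $y=1$: the tail is bounded by $C\|\theta_0\|_{L^\infty}$, and the near-origin part, after dyadic decomposition and the telescoping identity $\theta(t,0)-\theta(t,y) = \sum_{j\leq k}\delta_j$ for $y \in [2^k,2^{k+1}]$ with $k<0$, yields by Cauchy--Schwartz the key estimate
\begin{equation*}
\Lambda^\gamma\theta(t,0) \leq C_\gamma\Bigl(\sum_{j<0}\frac{2^{-2j\gamma}}{\eta(2^{j+1})}\Bigr)^{1/2}\mathcal{N}(t)^{1/2} + C_\gamma'\|\theta_0\|_{L^\infty}.
\end{equation*}
The geometric sum $\sum_{j<0} 2^{j(\alpha-2\gamma)}$ converges exactly when $\alpha>2\gamma$; combined with $\alpha<1$ this pins $\gamma<1/2$. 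Young's inequality then absorbs the $\mathcal{N}^{1/2}$ contribution into $\tfrac12\mathcal{N}$, producing the differential inequality
\begin{equation*}
\dot F(t) \geq \tfrac{c_\alpha}{2}F(t)^2 - C(\|\theta_0\|_{L^\infty}).
\end{equation*}

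Finally, the scaling symmetry $\theta_b(t,x) = b^{\gamma-1}\theta(b^\gamma t, bx)$ of \eqref{eq:CCF:dissipative} lets us replace $\theta_0$ by a rescaled $\theta_{0,b}$ for which $F[\theta_{0,b}](0)^2$ exceeds the blow-up threshold of the ODE; the ensuing finite-time blow-up of $F$ contradicts the uniform a priori bound and forces $\theta$ to leave $C^1_x$ in finite time. The main obstacle is the dyadic bound on $\Lambda^\gamma\theta(t,0)$: the summability condition $\alpha>2\gamma$, paired with the mandatory $\alpha<1$, pins the critical exponent at $\gamma=1/2$ exactly, matching the boundary of the parameter range left open for \eqref{eq:CCF:dissipative}.
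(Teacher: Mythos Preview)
Your approach keeps the integrable weight $\eta$ from \eqref{eq:eta:def}, whereas the paper switches to the non-integrable weight $\eta(x)=x^{-1-\alpha}$ on $(0,1)$. This is not just a cosmetic difference: your choice forces an extra constraint that breaks the argument in the range $\gamma\in[1/3,1/2)$.

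Concretely, you impose two conditions on $\alpha$. First, to bound $\frac12\int_\RR \eta\,\Lambda^\gamma\theta$ by $\|\Lambda^\gamma\eta\|_{L^1}\|\theta_0\|_{L^\infty}$ you need $\Lambda^\gamma\eta\in L^1$; since $\eta(x)\sim|x|^{-\alpha}$ near the origin, $\Lambda^\gamma\eta(x)\sim|x|^{-\alpha-\gamma}$ there, and integrability requires $\alpha+\gamma<1$. Second, your dyadic Cauchy--Schwarz bound on $\Lambda^\gamma\theta(t,0)$ requires $\sum_{j<0}2^{j(\alpha-2\gamma)}<\infty$, i.e.\ $\alpha>2\gamma$. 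The intersection $2\gamma<\alpha<1-\gamma$ is nonempty only when $3\gamma<1$, so your proof as written covers only $\gamma\in(0,1/3)$. In the last paragraph you assert that ``combined with $\alpha<1$ this pins $\gamma<1/2$'', but you have silently dropped the constraint $\alpha+\gamma<1$ that you invoked earlier.

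The paper avoids this collision by taking $\eta(x)=x^{-1-\alpha}$ on $(0,1)$. Then the singular part of $\int_0^\infty\eta(x)(\Lambda^\gamma\theta(x)-\Lambda^\gamma\theta(0))\,dx$ is, up to a constant, $-\Lambda^\alpha(\Lambda^\gamma\theta)(0)=-\Lambda^{\alpha+\gamma}\theta(0)$ by the composition identity; the resulting integral $\int_0^1 x^{-1-\alpha-\gamma}(\theta(0)-\theta(x))\,dx$ is then controlled by the telescoping/Cauchy--Schwarz step under the single condition $\gamma<(1-\alpha)/2$, which reaches the full range $\gamma<1/2$. The cost is that $F$ is no longer a priori bounded and the resulting ODE picks up a linear term, $\dot F\geq F^2/C-CF-C\|\theta_0\|_{L^\infty}$, so one finishes by \emph{constructing} data with $F(0)$ large (a smoothed cusp of unit height), rather than by rescaling. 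On that last point: your one-parameter dissipative rescaling $\theta_b=b^{\gamma-1}\theta(b^\gamma t,bx)$ does not in general drive $F(0)^2$ above the threshold $C(\|\theta_{0,b}\|_{L^\infty})$, since both quantities scale together; you should instead exhibit data directly, as the paper does.
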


\begin{proof}[Proof of Theorem~\ref{thm:telescope:dissipative}]
We modify the proof of Theorem~\ref{thm:telescope} by taking the test function $\eta(x)$ defined as
\begin{align}
\eta(x) = 
\begin{cases}
x^{-1-\alpha}&, 0 < x < 1, \\
x^{-2-\alpha}&, x>1,
\end{cases}
\label{eq:new:eta:def}
\end{align}
where the parameter $\alpha \in (0,1)$ will be later chosen suitably, in terms of $\gamma \in (0,1/2)$.
Note that $\eta(x) = - \partial_x \phi(x)$ for all  $x\in (0,1) \cup (1,\infty)$, where 
\[ 
\phi(x) = 
\begin{cases}
\frac{1}{1+\alpha} + \frac{1}{\alpha} (x^{-\alpha}-1) &, 0 < x < 1, \\
\frac{1}{1+\alpha} x^{-1-\alpha}&, x>1.
\end{cases}
\] 
and $\phi(x) \geq 0$.

Assume $\theta$ remains smooth on $[0,T)$. The following computations are then justified for all $t<T$. As in \eqref{eq:F:def}, we use the Lyapunov functional
\begin{align}
F(t) = \int_0^\infty  \eta(x) (\theta(t,0) -  \theta(t,x) ) \dd x.
\label{eq:F:def:2}
\end{align}
Note that in view of the non-integrable singularity of $\eta$ near the origin, as opposed to \eqref{eq:F:apriori}, ere we do not a-priori know that $F$ is a bounded function.

Then, similarly to \eqref{eq:F:dot:1}, since $\theta$ is even we have that
\[
\frac{dF}{dt}(t) = \int_0^\infty \eta(x) H\theta(t,x) \, \partial_x \theta(t,x) \dd x + \int_0^\infty \eta(x) \left( \Lambda^\gamma \theta(t,x) - \Lambda^\gamma \theta(t,0) \right) \dd x.
\]
The nonlinear term is bounded from below as in \eqref{eq:dtF:lower} by
\begin{align}
\int_0^\infty \eta(x) H\theta(t,x) \, \partial_x \theta(t,x) \dd x 
&\geq \frac{\log 3}{4\pi} \sum_{k\in \ZZ} \eta(2^{k+1})  (\theta(t,2^{k}) - \theta(t,2^{k+1}) )^2 \geq \frac{\log 3}{4\pi c_\alpha} (F(t))^2
\label{eq:nonlinearity:lower:telescoping}
\end{align}
where in the last inequality we have used \eqref{eq:F:upper:ancient}--\eqref{eq:F:upper} and the fact that 
\[
c_\alpha = \sum_{k \in \ZZ} \frac{(\phi(2^k))^2}{\eta(2^{k+1})} = \sum_{k<0}  \left(\frac{1}{1+\alpha} + \frac{2^{-k\alpha}-1}{\alpha}\right)^2 2^{(k+1)(1+\alpha)} + \sum_{k\geq 0} \frac{2^{-(2+2\alpha)k}}{(1+\alpha)^2} 2^{(k+1)(2+\alpha)}<\infty
\]
for $\alpha \in (0,1)$. Here we have implicitly used Lemma~\ref{lem:fractional:good:properties}.

In order to treat the nonlocal term, we appeal to a trick already present in~\cite{LiRodrigo08,Kiselev10}, namely
that $\Lambda^\alpha (\Lambda^\gamma \theta)(t,0) = \Lambda^{\alpha + \gamma}\theta(t,0)$. In view of the definition of $\eta$ and the evenness of $\theta$ we may write
\begin{align}
&\int_0^\infty \eta(x) (\Lambda^\gamma \theta(t,x) - \Lambda^\gamma \theta(t,0)) \dd x  \notag\\
&\quad =- \frac 12 \int_{\RR} \frac{\Lambda^\gamma \theta(t,0) - \Lambda^\gamma \theta(t,x)}{|x|^{1+\gamma}} \dd x - \int_{1}^{\infty} \frac{x-1}{x^{2+\alpha}} (\Lambda^\gamma\theta(t,x) - \Lambda^\gamma(t,0)) \dd x\notag\\
&\quad = - c_{\alpha,\gamma} \int_{\RR} \frac{\theta(t,0)-\theta(t,x)}{|x|^{1+\gamma+\alpha}} \dd x - \int_{\RR} \left( \frac{x-1}{x^{2+\alpha}} {\bf 1}_{x\geq 1}\right) \Lambda^\gamma \theta(t,x) \dd x + \Lambda^\gamma\theta(t,0) \int_1^\infty \frac{x-1}{x^{2+\alpha}} \dd x.
\label{eq:dissipative:lower:telescope}
\end{align}
At this stage we notice that the last term on the right side of \eqref{eq:dissipative:lower:telescope} is positive: indeed, $\theta(t,\cdot)$ attains its maximum at $x=0$, and thus $\Lambda^\gamma \theta(t,0) >0$, whereas the integral term in positive. Thus this term may be dropped for lower bounds. Also, the second term in the right side of \eqref{eq:dissipative:lower:telescope} may be bounded as
\[
\left| \int_{\RR} \left( \frac{x-1}{x^{2+\alpha}} {\bf 1}_{x\geq 1}\right) \Lambda^\gamma \theta(t,x) \dd x \right| = \left| \int_{\RR} \Lambda^\gamma \left( \frac{x-1}{x^{2+\alpha}} {\bf 1}_{x\geq 1}\right) \theta(t,x) \dd x \right| \leq c_{\alpha,\gamma}' \|\theta(t,\cdot)\|_{L^\infty} \leq c_{\alpha,\gamma}' \|\theta_0\|_{L^\infty}
\]
where we have used the $L^\infty$ maximum principle (Lemma~\ref{lem:maxprinciple}) and the fact that 
$\Lambda^\gamma \left( \frac{x-1}{x^{2+\alpha}} {\bf 1}_{x\geq 1}\right) \in L^1(\RR)$
when $\gamma \in (0,1)$. 
Thus, combining the above estimate with  \eqref{eq:dissipative:lower:telescope} we obtain
\begin{align}
\int_0^\infty \eta(x) (\Lambda^\gamma \theta(t,x) - \Lambda^\gamma \theta(t,0)) \dd x 
&\geq - \frac{c_{\alpha,\gamma}}{2} \int_{0}^\infty \frac{\theta(t,0)-\theta(t,x)}{x^{1+\gamma+\alpha}} \dd x  - c_{\alpha,\gamma}' \|\theta_0\|_{L^\infty} \notag\\
&\geq - \frac{c_{\alpha,\gamma}}{2} \int_{0}^1 \frac{\theta(t,0)-\theta(t,x)}{x^{1+\gamma+\alpha}} \dd x  - c_{\alpha,\gamma}'' \|\theta_0\|_{L^\infty}
\label{eq:dissipative:lower:telescope:2}
\end{align}
for some suitable constant $c_{\alpha,\gamma}''$. To conclude, we proceed as in \eqref{eq:F:upper:ancient}. 
We have 
\[ 
x^{-(1+\alpha+\gamma)} = \frac{1}{\alpha+\gamma} \partial_x \left( 1 - x^{-(\alpha+\gamma)}\right) 
\]
and thus
\begin{align}
(\alpha+\gamma) \int_{0}^1 \frac{\theta(t,0)-\theta(t,x)}{x^{1+\gamma+\alpha}} \dd x 
&= \int_0^1 \frac{1- x^{\alpha+\gamma}}{x^{\alpha+\gamma}} \left( -\partial_x \theta(t,x)\right) \dd x\notag\\
&\leq \sum_{k< 0}  \int_{2^{k}}^{2^{k+1}} \frac{1}{x^{\alpha+\gamma}} (-\partial_x \theta(t,x))\dd x \notag\\
&\leq  \sum_{k< 0} \frac{2^{-k(\alpha+\gamma)}}{(\eta(2^{k+1}))^{1/2}}  (\theta(t,2^k) - \theta(t,2^{k+1}) (\eta(2^{k+1}))^{1/2} \notag\\
&\leq  \left( \sum_{k< 0} 2^{-2k(\alpha+\gamma)} 2^{(k+1)(1+\alpha)} \right)^{1/2} \frac{F(t)}{c_\alpha^{1/2}}
\label{eq:main:dissipative:telescoping}
\end{align}
where in the last inequality we have used the Cauchy-Schwartz inequality and estimate \eqref{eq:nonlinearity:lower:telescoping}.
We emphasize that only at this stage a condition on the relationship between $\alpha,\gamma \in (0,1)$ emerges: in order for the sum on the right side of \eqref{eq:main:dissipative:telescoping} to converge, we need to choose
\[
\gamma < \frac{1-\alpha}{2}.
\]
This is the only reason that restricts the range of $\gamma$ to $(0,1/2)$, since we must have $\alpha>0$, which is in turn required to apply the composition of fractional powers of the Laplacian argument in \eqref{eq:dissipative:lower:telescope}.

Summarizing \eqref{eq:F:def:2}, \eqref{eq:nonlinearity:lower:telescoping}, \eqref{eq:dissipative:lower:telescope:2}, and \eqref{eq:main:dissipative:telescoping} we arrive at
\[
\frac{dF}{dt}(t) \geq \frac{(F(t))^2}{C_{\alpha,\gamma}} - C_{\alpha,\gamma} F(t) - C_{\alpha,\gamma} \|\theta_0\|_{L^\infty} 
\]
for a sufficiently large constant $C$ that depends solely on $\alpha$ and $\gamma$ and not on the data.
The proof is completed by choosing $\theta_0$ of $L^\infty$ norm $1$, but with $F(0)$ is sufficiently large. An example of such function can be obtained by slightly smoothing a cusp at the origin, of height $1$.
\end{proof}

\section{Singularity formation via De Giorgi} 
\label{sec:DeGiorgi}

We start with the following identity of Virial type.

\begin{lemma} \label{l:globaldissipation}
Let $\theta$ solve \eqref{eq:CCF}. Then
\[ 
\partial_t \int_{\R} \theta \dd x = -\|\theta\|_{\dot{H}^{1/2}}^2 = -\int \!\!\!\int_{\RR^2} \frac{|\theta(t,x)-\theta(t,y)|^2}{|x-y|^2} \dd x \dd y.
\]
\end{lemma}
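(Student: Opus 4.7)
The plan is to integrate the equation in space and use the skew-adjointness of the Hilbert transform together with the identity $H \partial_x = -\Lambda$. Assuming that $\theta$ is a sufficiently smooth and decaying solution (say $C^{1,\alpha}$ and vanishing at infinity, so that all manipulations below are justified), I would first integrate the transport equation \eqref{eq:CCF} over $\RR$ to obtain
\[
\partial_t \int_\RR \theta \, dx = -\int_\RR H\theta \; \partial_x \theta \, dx.
\]

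Next, I would use the fact that the Hilbert transform is skew-adjoint on $L^2(\RR)$, i.e.\ $\int_\RR (Hf) g \, dx = -\int_\RR f (Hg) \, dx$ (which in this context is really a Plancherel identity, since $\widehat{Hf}(\xi) = -i\,\mathrm{sgn}(\xi)\hat f(\xi)$ is purely imaginary multiplier). This moves the Hilbert transform from $\theta$ onto $\partial_x\theta$:
\[
-\int_\RR H\theta \; \partial_x \theta \, dx = \int_\RR \theta \; H(\partial_x \theta) \, dx = -\int_\RR \theta \; \Lambda \theta \, dx,
\]
where in the last step I use the identity $H\partial_x \theta = -\Lambda \theta$ recorded just after \eqref{eq:Hilbert}. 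By Plancherel, $\int \theta \Lambda \theta \, dx = \int |\xi| |\hat\theta(\xi)|^2 d\xi = \|\theta\|_{\dot H^{1/2}}^2$, which yields the first equality in the statement.

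Finally, to obtain the kernel representation, I would use the standard pointwise formula for the fractional Laplacian in $1$D,
\[
\Lambda\theta(x) = c\, \mathrm{p.v.} \int_\RR \frac{\theta(x)-\theta(y)}{|x-y|^{2}}\,dy
\]
(with the normalization constant absorbed into the $\dot H^{1/2}$ seminorm as the lemma implicitly uses), multiply by $\theta(x)$, integrate in $x$, and then symmetrize the resulting double integral in $(x,y)$:
\[
\int_\RR \theta(x) \Lambda\theta(x)\,dx = \frac{1}{2} \int\!\!\!\int_{\RR^2} \frac{|\theta(x)-\theta(y)|^2}{|x-y|^2}\,dx\,dy.
\]
Combined with the previous display this gives the second equality.

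The main obstacle is not conceptual but technical: justifying the integration in space (the quantity $\int \theta \, dx$ need not be absolutely convergent for all admissible data), Fubini on the principal-value double integral, and the skew-adjointness pairing when $\theta$ is only assumed decaying and $C^{1,\alpha}$. In practice this is handled either by working in the classical regularity class of Lemma~\ref{lem:maxprincipleW11} (so that $\partial_x\theta \in L^1 \cap L^\infty$ and $\theta$ is bounded and decaying, making $H\theta\,\partial_x\theta \in L^1$), or by regularizing via the vanishing viscosity discussed in Section~\ref{sec:Prelim} and passing to the limit. Once the decay and integrability are in place, each step above is a routine identity.
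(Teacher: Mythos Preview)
Your argument is correct and is exactly the ``integration by parts'' the paper invokes: integrate the equation in $x$, use the skew-adjointness of $H$ together with $H\partial_x=-\Lambda$ to rewrite the nonlinear term as $\int \theta\,\Lambda\theta$, and then symmetrize the kernel representation of $\Lambda$ to get the double integral. There is no difference in approach, only in the level of detail.
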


\begin{proof}
Integration by parts.
\end{proof}

The Virial type identity of Lemma \ref{l:globaldissipation} can be used to play the same role as an energy dissipation inequality in a De Giorgi iteration scheme. This is the idea of this section. Using De Griogi's technique we derive a decay for $\theta$ in $L^\infty$, from which we can deduce that the solution must develop a singularity in finite time. Otherwise, its oscillation should be constant.

\begin{lemma} \label{l:gloabl-decay-unitscale}
Let $\theta$ solve \eqref{eq:CCF}. There is a constant $\eps_0 >0$ so that if $\int_\R \theta(0,x)^+ \dx \leq \eps_0$ then $\theta(1,x) \leq 1$ for all $x \in \R$.
\end{lemma}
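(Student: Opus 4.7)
I would prove this lemma by a De Giorgi iteration, with the $L^1$-dissipation of truncations (rather than the usual $L^2$-energy dissipation) supplying the nonlinear self-improvement. Fix the truncation levels $C_k = 1 - 2^{-k}$ and times $T_k = 1 - 2^{-k}$, set $\theta_k := (\theta - C_k)^+$, and monitor
\[
U_k := \sup_{t \in [T_k, 1]} \|\theta_k(t)\|_{L^1} + \int_{T_k}^1 \|\theta_k(t)\|_{\dot H^{1/2}}^2 \dd t.
\]
The goal is a nonlinear recursion $U_k \leq A^k U_{k-1}^{1+\alpha}$ for some fixed $\alpha>0$, which, by the standard De Giorgi convergence lemma, forces $U_k \to 0$ provided $U_0 \leq \eps_0$ is small enough. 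Passing to $k \to \infty$ then yields $(\theta(1,\cdot) - 1)^+ \equiv 0$, i.e. $\theta(1,x) \leq 1$ everywhere.

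The first ingredient is a truncated analogue of Lemma~\ref{l:globaldissipation}, namely
\[
\partial_t \int_{\RR} \theta_k \dd x + \|\theta_k\|_{\dot H^{1/2}}^2 \leq 0.
\]
To derive it, multiply \eqref{eq:CCF} by $\mathbf{1}_{\theta > C_k}$, use $\partial_x \theta_k = \mathbf{1}_{\theta > C_k}\partial_x \theta$, and rewrite the nonlinear flux via $H\partial_x = -\Lambda$ as $-\int \Lambda \theta \cdot \theta_k \dd x$. Splitting $\theta = \theta_k + \min(\theta, C_k)$ and introducing $\omega := (C_k - \theta)^+$, whose support is disjoint from that of $\theta_k$, the main term is $\|\theta_k\|_{\dot H^{1/2}}^2$ and the cross term $-\int \Lambda \omega \cdot \theta_k = \int \omega \cdot (-\Lambda \theta_k) \dd x$ is nonnegative because at any point where $\theta_k$ vanishes the pointwise singular integral representation gives $\Lambda \theta_k \leq 0$. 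Integrating from $0$ to $t$ and using $\theta_0(0) = \theta(0)^+$ gives in particular $U_0 \leq 2\eps_0$.

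The second ingredient is the 1D Nash-type inequality
\[
\|f\|_{L^2}^2 \leq C \|f\|_{L^1} \|f\|_{\dot H^{1/2}},
\]
obtained by splitting $\|f\|_{L^2}^2 = \int |\hat f|^2 \dd\xi$ at $|\xi| = R$, estimating the low frequencies by $2R\|f\|_{L^1}^2$ and the high frequencies by $R^{-1}\|f\|_{\dot H^{1/2}}^2$, and optimizing in $R$. To extract the recursion, note that $\theta_{k-1}\geq 2^{-k}$ on $\{\theta_k > 0\}$, so Chebyshev gives $|\{\theta_k(t) > 0\}| \leq 2^k \|\theta_{k-1}(t)\|_{L^1}$. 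Combining Cauchy--Schwarz with Nash,
\[
\|\theta_k(t)\|_{L^1} \leq \|\theta_{k-1}(t)\|_{L^2}\, |\{\theta_k > 0\}|^{1/2} \lesssim 2^{k/2}\, \|\theta_{k-1}(t)\|_{L^1}\, \|\theta_{k-1}(t)\|_{\dot H^{1/2}}^{1/2}.
\]
Integrating over $t \in [T_{k-1}, T_k]$, using $\|\theta_{k-1}\|_{L^1} \leq U_{k-1}$ and Hölder in time for the $\|\cdot\|_{\dot H^{1/2}}^{1/2}$ factor, the mean value theorem then produces an $s \in [T_{k-1}, T_k]$ with $\|\theta_k(s)\|_{L^1} \lesssim 2^{3k/4} U_{k-1}^{5/4}$. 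Restarting the dissipation inequality from time $s$ bounds both terms of $U_k$ by the same quantity, giving $U_k \leq A^k U_{k-1}^{1+1/4}$, as desired.

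The main obstacle will be establishing the truncated dissipation inequality cleanly. The formal manipulation is transparent, but for the vanishing-viscosity / classical approximating solutions one must justify the integration by parts for the nondifferentiable cutoff $(\theta - C_k)^+$ and verify the sign of the cross term $\int \omega \cdot (-\Lambda \theta_k)$ carefully, which is where the nonlocality makes things subtler than for a local parabolic equation. Once that step is in place, the Nash inequality converts the $L^1$-dissipation into nonlinear feedback and the classical De Giorgi convergence lemma closes the argument.
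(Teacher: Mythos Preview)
Your proposal is correct and follows essentially the same approach as the paper: the same truncations $\theta_k=(\theta-(1-2^{-k}))^+$, the same truncated dissipation inequality $\partial_t\|\theta_k\|_{L^1}+\|\theta_k\|_{\dot H^{1/2}}^2\le 0$ (proved by the same sign argument on the cross term), and the same Nash interpolation $\|f\|_{L^2}^2\lesssim \|f\|_{L^1}\|f\|_{\dot H^{1/2}}$. The only difference is bookkeeping: the paper tracks the single scalar $a_k=\|\theta_k(t_k)\|_{L^1}$ at times $t_k$ chosen by the mean value theorem applied directly to $\|\theta_k\|_{\dot H^{1/2}}^2$ (obtaining the recursion $a_{k+1}\lesssim 2^{Ck}a_k^{3/2}$), whereas you carry the Caffarelli--Vasseur style quantity $U_k$ on fixed intervals $[T_k,1]$ and get exponent $5/4$; both close the iteration.
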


Before giving the proof Lemma~\ref{l:gloabl-decay-unitscale} we recall the following interpolation.
\begin{lemma} \label{l:interpolation}
Let $f : \R \to \R$. The following interpolation holds
\[ \|f\|_{L^2} \leq C \|f\|_{L^1}^{1/2} \|f\|_{\dot H^{1/2}}^{1/2}.\]
\end{lemma}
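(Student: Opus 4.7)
\medskip

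The plan is to prove this standard interpolation inequality via Plancherel's theorem, splitting the frequency integral in two, and optimizing in the splitting parameter. The three norms involved have clean Fourier-side descriptions: $\|f\|_{L^2} = \|\hat f\|_{L^2}$, $\|f\|_{\dot H^{1/2}}^2 = c\int_{\RR} |\xi|\, |\hat f(\xi)|^2 \dd \xi$, and $\|\hat f\|_{L^\infty} \leq \|f\|_{L^1}$ by the trivial bound on the Fourier transform of an $L^1$ function. These three facts are all one needs.

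Concretely, I would write, for any $R>0$,
\[
\|f\|_{L^2}^2 \;=\; \int_{|\xi|\leq R} |\hat f(\xi)|^2 \dd\xi \;+\; \int_{|\xi|>R} |\hat f(\xi)|^2 \dd\xi.
\]
For the low-frequency piece I would use $\|\hat f\|_{L^\infty}\leq \|f\|_{L^1}$, giving a bound $\leq 2R \|f\|_{L^1}^2$. For the high-frequency piece I would use the pointwise bound $1 \leq |\xi|/R$ on $\{|\xi|>R\}$, which together with Plancherel gives a bound $\leq R^{-1}\|f\|_{\dot H^{1/2}}^2$ (up to the constant relating the $\dot H^{1/2}$ norm to its Fourier representation). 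Thus
\[
\|f\|_{L^2}^2 \;\leq\; C_1 R \|f\|_{L^1}^2 + C_2 R^{-1} \|f\|_{\dot H^{1/2}}^2.
\]

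Finally I would optimize in $R$ by choosing $R = \|f\|_{\dot H^{1/2}}/\|f\|_{L^1}$ (assuming both quantities are positive; the degenerate cases are trivial, since $\|f\|_{\dot H^{1/2}}=0$ forces $\hat f$ supported at $0$, hence $f$ is a constant, which combined with $f\in L^1$ gives $f\equiv 0$), producing the desired bound $\|f\|_{L^2}^2 \leq C \|f\|_{L^1}\|f\|_{\dot H^{1/2}}$. There is no genuine obstacle here: the only mildly subtle point is to set up the Fourier normalizations consistently so that the constant $C$ is explicit, but this is bookkeeping rather than a real difficulty. The inequality is a special case of Gagliardo--Nirenberg interpolation on $\RR$, and this Fourier-splitting argument is the most direct route.
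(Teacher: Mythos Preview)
Your proposal is correct and essentially identical to the paper's own proof: the paper also splits the Plancherel integral at a radius $r$, bounds the low-frequency piece by $2r\|f\|_{L^1}^2$ via $\|\hat f\|_{L^\infty}\leq\|f\|_{L^1}$ and the high-frequency piece by $r^{-1}\|f\|_{\dot H^{1/2}}^2$, and then sets $r=\|f\|_{\dot H^{1/2}}/\|f\|_{L^1}$.
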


\begin{proof}
We use the Fourier transform
\begin{align*}
\|f\|_{L^2}^2 = \int |\hat f|^2 \dd \xi &\leq \int_{-r}^r |\hat f|^2 \dd \xi + \frac 1 r \int_{\R \setminus [-r,r]} |\xi| |\hat f|^2 \dd \xi \leq 2r \|f\|_{L^1}^2 + \frac 1r \|f\|_{\dot{H}^{1/2}}^2
\end{align*}

Picking $r = \|f\|_{H^{1/2}} / \|f\|_{L^1}$ we finish the proof with $C=3$.
\end{proof}

\begin{proof}[Proof of Lemma~\ref{l:gloabl-decay-unitscale}]
We define the following truncations (like in De Giorgi's proof)
\[ \theta_k(t,x) = \left( \theta(t,x) - (1-2^{-k}) \right)^+.\]

Naturally, for any value of $k$, the function $\theta_k$ satisfies the same transport equation as $\theta$.
\[ \partial_t \theta_k + H\theta \; \partial_x \theta_k = 0.\]

Integrating by parts as in Lemma \ref{l:globaldissipation}, we obtain
\[ 
\begin{aligned}
\partial_t \int_\R \theta_k &= - \int_\R \Lambda \theta \; \theta_k \dd x
= - \int_\R \Lambda \theta_k \; \theta_k \dd x + \int_\R \Lambda \left(\theta_k - \theta + (1-2^{-k})\right)  \; \theta_k \dd x,
\end{aligned}
 \]

Note that $\theta_k > 0$ only at those points where $\theta > (1-2^{-k})$. These are also the points where the non negative function $\left(\theta_k - \theta + (1-2^{-k})\right)$ is equal to zero. Therefore, at these points $\Lambda \left(\theta_k - \theta + (1-2^{-k})\right) \leq 0$ and the second term on the right hand side is negative. Therefore
\begin{align} 
\label{e:truncated-dissipation}  
\partial_t \int_\R \theta_k  \leq - \int_\R \Lambda \theta_k \; \theta_k \dd x = -\|\theta_k\|_{\dot{H}^{1/2}}^2.
\end{align}

For the rest of the proof, we will construct a sequence of times $0 = t_0 < t_1 \leq t_2 \leq t_2 \leq \dots < 1$ such that the quantity
\[ a_k := \int_\R \theta_k(t_k,x) \dx,\]
converges to zero as $k \to \infty$. That means that if $t_\infty = \lim t_k \leq 1$, then $\theta(t_\infty,x) \leq 1$ and the result follows from the maximum principle of Lemma \ref{lem:maxprinciple}. 

We will make the construction so that for all $k>0$, $t_k$ belongs to the interval $(t_{k-1} , 1-2^{-k})$.
Assume we have constructed $t_k$ up to some value $k$. From \eqref{e:truncated-dissipation} and the mean value theorem, we can find some $t_{k+1} < 1 - 2^{-k-1}$ so that
\[ \|\theta_k(t_{k+1},\cdot)\|_{\dot{H}^{1/2}}^2 \leq 2^{k+1} a_k.\]

Also from \eqref{e:truncated-dissipation}, we know that $\|\theta_k(t_{k+1},\cdot)\|_{L^1} \leq \|\theta_k(t_k,\cdot)\|_{L^1} = a_k$. Using the interpolation of Lemma \ref{l:interpolation}, 
\begin{align*} 
\|\theta_k(t_{k+1},\cdot)\|_{L^2} &\leq \|\theta_k(t_{k+1},\cdot)\|_{L^1}^{1/2} \|\theta_k(t_{k+1},\cdot)\|_{H^{1/2}}^{1/2}, \\
&\leq 2^{(k+1)/4} \ a_k^{3/4}
\end{align*}
Therefore,
\begin{align}
a_{k+1} = \int (\theta_k - 2^{-k-1})^+ \dd x 
&\leq \|\theta_k\|_{L^2} \left| \left\{ \theta_k > 2^{-k-1} \right\} \right|^{1/2} \notag \\
&\leq 2^{k+1} \|\theta_k\|_{L^2}^2 \notag\\
&\leq 2^{5(k+1)/4} \ a_k^{3/2}.
\label{eq:ak:ineq:DeGiorgi}
\end{align}
Thus, we obtained a recurrence relationship for $a_k$ which converges to zero provided that 
\[ 
a_0 = \int \theta_0(t_0,x) \dx \leq \eps_0
\] 
is small enough, which concludes the proof of the Lemma.
\end{proof}

\begin{theorem} \label{l:globaldecay}
Let $\theta$ solve \eqref{eq:CCF}. Assume $\theta_0 = \theta(0,\cdot)$ is a non negative function in $L^1(\R)$. Then
\[ \|\theta(T,\cdot)\|_{L^\infty} \leq C \left( \frac {\|\theta_0\|_{L^1}}  T \right)^{1/2}.\]
\end{theorem}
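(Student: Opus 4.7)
\medskip
\noindent\textbf{Proof proposal for Theorem~\ref{l:globaldecay}.}
The plan is to reduce the desired quantitative $L^\infty$ bound at an arbitrary time $T>0$ to the unit-scale decay already furnished by Lemma~\ref{l:gloabl-decay-unitscale} via the two-parameter scaling invariance recalled in Section~\ref{sec:Prelim}: if $\theta$ solves \eqref{eq:CCF}, then so does
\[
\tilde\theta(t,x) \;:=\; a b^{-1}\, \theta(at,bx)
\]
for any $a,b>0$. Note that since $\theta_0\geq 0$, the (vanishing-viscosity) solution stays non-negative, so $\theta(t,x)^+ = \theta(t,x)$ and the hypothesis of Lemma~\ref{l:gloabl-decay-unitscale} reduces to a smallness condition on $\|\tilde\theta(0,\cdot)\|_{L^1}$.

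First I would compute how the $L^1$ norm scales: a change of variables gives
\[
\|\tilde\theta(0,\cdot)\|_{L^1} \;=\; a b^{-2}\,\|\theta_0\|_{L^1}.
\]
Given $T>0$, I choose $a=T$ and then pick $b$ so that $a b^{-2} \|\theta_0\|_{L^1} = \eps_0$, namely
\[
b \;=\; \left(\frac{T\,\|\theta_0\|_{L^1}}{\eps_0}\right)^{1/2}.
\]
With this choice Lemma~\ref{l:gloabl-decay-unitscale} applies to $\tilde\theta$ and yields $\tilde\theta(1,x)\leq 1$ for all $x\in\R$, that is $a b^{-1}\theta(T,bx)\leq 1$, or equivalently
\[
\|\theta(T,\cdot)\|_{L^\infty} \;\leq\; \frac{b}{a} \;=\; \frac{1}{\sqrt{\eps_0}}\left(\frac{\|\theta_0\|_{L^1}}{T}\right)^{1/2},
\]
which is the claimed bound with $C = \eps_0^{-1/2}$.

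There is essentially no obstacle beyond this dimensional-analysis rescaling, since the hard analytic work (the De Giorgi iteration producing $\eps_0$) has already been carried out in the unit-scale lemma. The only small points worth checking are that the rescaled solution $\tilde\theta$ retains the properties needed to run Lemma~\ref{l:gloabl-decay-unitscale} (non-negative $L^1$ initial datum, and being a solution of \eqref{eq:CCF}) — both are immediate from the scaling symmetry and the preservation of non-negativity under transport — and that one can justify the computations at the viscous level and pass to the limit $\eps\to 0$ as was done throughout the paper.
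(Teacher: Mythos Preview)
Your proof is correct and follows essentially the same route as the paper: both use the two-parameter scaling $\tilde\theta(t,x)=ab^{-1}\theta(at,bx)$ with $a=T$ and $b=(T\|\theta_0\|_{L^1}/\eps_0)^{1/2}$ to reduce to the unit-scale Lemma~\ref{l:gloabl-decay-unitscale}, and arrive at the same constant $C=\eps_0^{-1/2}$. The paper additionally invokes Lemma~\ref{l:globaldissipation} to record the $L^2_t\dot H^{1/2}_x$ bound for $\tilde\theta$, but this is already implicit in the proof of Lemma~\ref{l:gloabl-decay-unitscale}, so your omission of that remark is harmless.
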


\begin{proof}
Since $\theta_0 \geq 0$, from Lemma \ref{lem:maxprinciple}, $\theta(t,x) \geq 0$ for all $t,x$. In particular $\int_\R \theta(t,x) \dd x \geq 0$ for all $t \geq 0$. Let $\eps_0$ be the absolute constant from Lemma \ref{l:gloabl-decay-unitscale}. Consider the rescaled function $\tilde \theta$, which also solves \eqref{eq:CCF},
defined by
\[ 
\tilde \theta(t,x) = ab^{-1} \theta(at,bx),
\]
with $a = T$ and $b = \left(\|\theta_0\|_{L^1} T / \eps_0 \right)^{1/2}$. We easily check that
\begin{align*}
\|\tilde \theta(0,\cdot) \|_{L^1} &= \eps_0, \\
\|\theta(T,\cdot)\|_{L^\infty} &=  \frac ba \|\tilde \theta(1,\cdot)\|_{L^\infty}
\end{align*}

From Lemma \ref{l:globaldissipation}, we conclude that for all $t>0$,
\[ 
\int_0^t \| \tilde\theta(s,\cdot)\|_{\dot{H}^{1/2}}^2 \dd s \leq \|\tilde \theta_0\|_{L^1}.
\]
Applying Lemma \ref{l:gloabl-decay-unitscale}, we get that $\|\tilde \theta(1,\cdot)\|_{L^\infty} \leq 1$. Therefore
\[ 
\| \theta(T,\cdot) \|_{L^\infty} \leq \left( \frac{ \|\theta_0\|_{L^1} }{\eps_0 T} \right)^{1/2}
\]
which concludes the proof.
\end{proof}

\begin{corollary}
For any initial data $\theta_0 \geq 0$ which is integrable, there is no classical global solution $\theta$ to \eqref{eq:CCF}.
\end{corollary}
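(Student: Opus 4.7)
The plan is a proof by contradiction that juxtaposes the decay estimate of Theorem~\ref{l:globaldecay} against the fact that a classical transport flow preserves pointwise values of $\theta$ along characteristics. To rule out the trivial case, I would first note that if $\theta_0 \equiv 0$ then $\theta\equiv 0$ is a global solution, so the statement should be read for nontrivial $\theta_0 \geq 0$; in that case $M := \sup_\RR \theta_0 > 0$.

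Assume toward contradiction that a classical (say $C^{1,\alpha}$ and decaying at infinity) global solution $\theta$ exists. Since $\theta_0\geq 0$, by the $L^\infty$ maximum principle of Lemma~\ref{lem:maxprinciple} the solution is non-negative for all $t\geq 0$. Because \eqref{eq:CCF} is a pure transport equation and the drift $u = H\theta$ is $C^\alpha$ in space and continuous in time by the assumed regularity, the characteristic ODE $\dot X(t) = H\theta(t,X(t))$ has unique global flows and $\theta$ is constant along characteristics. In particular, picking a sequence $x_n$ realizing the supremum of $\theta_0$ and following the corresponding trajectories, one obtains
\[
\sup_\RR \theta(t,\cdot) \;=\; \sup_\RR \theta_0 \;=\; M \;>\;0
\]
for every $t\geq 0$. (Lemma~\ref{lem:maxprinciple} gives $\leq$; the transport structure gives the reverse.)

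On the other hand, the integrability $\theta_0\in L^1$ combined with Theorem~\ref{l:globaldecay} (whose proof uses only Lemma~\ref{l:gloabl-decay-unitscale} and rescaling, both of which apply to non-negative integrable initial data) yields
\[
\|\theta(T,\cdot)\|_{L^\infty} \;\leq\; C \left( \frac{\|\theta_0\|_{L^1}}{T}\right)^{1/2} \;\xrightarrow[T\to\infty]{}\; 0.
\]
Combining with $\sup_\RR \theta(T,\cdot) = M>0$ for all $T$ yields the desired contradiction, so no classical global solution can exist.

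The main subtlety, really just bookkeeping, is to verify that the hypotheses of Theorem~\ref{l:globaldecay} genuinely apply to the hypothetical classical solution. This reduces to justifying the integration by parts of Lemma~\ref{l:globaldissipation} and the dissipation inequality \eqref{e:truncated-dissipation} in the De Giorgi step, both of which require sufficient spatial decay of $\theta$ and of $H\theta$. Under the standing classical-solution assumption ($C^{1,\alpha}$ with decay at infinity, in the spirit of Lemma~\ref{lem:maxprincipleW11}), these boundary terms vanish and the cited estimates go through unchanged, closing the argument.
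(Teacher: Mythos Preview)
Your argument is correct and follows the same route as the paper: invoke Theorem~\ref{l:globaldecay} to force $\|\theta(T,\cdot)\|_{L^\infty}\to 0$, and contradict this with the fact that for a classical transport solution the maximum of $\theta$ is preserved in time. The paper's proof is the two-sentence version of what you wrote; your extra discussion of characteristics, the trivial case $\theta_0\equiv 0$, and the decay hypotheses needed for Lemma~\ref{l:globaldissipation} is sound bookkeeping but not a different idea.
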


\begin{proof}
A classical solution would make $\max_\R \theta(t,\cdot)$ constant and $\min_\R \theta(t,\cdot) = 0$ for all time $t>0$. This contradicts Theorem~\ref{l:globaldecay}.
\end{proof}

\begin{remark}
The proof can be extended to the family of equations
\[ 
\partial_t \theta + (H \Lambda^s \theta) \; \theta_x = 0,
\]
provided that $-1 < s < 1$. Indeed, when $s\in (-1,1)$ we have that $s+1 >0$ and similarly to \eqref{e:truncated-dissipation} we arrive at
\[
\partial_t \int \theta_k \dd x \leq - \|\theta_k\|_{\dot{H}^{(1+s)/2}}^2.
\]
The interpolation inequality
\[
\|f\|_{L^2} \leq C_s \|f\|_{L^1}^{(s+1)/(s+2)} \|f\|_{\dot{H}^{(1+s)/2}}^{1/(s+2)}
\]
which may be proven as Lemma~\ref{l:interpolation} is, then leads to 
\[
a_{k+1} \leq C_s 2^{(k+1)(2s+5)/(s+2)} a_k^{(2s+3)/(s+2)}
\]
by repeating the argument in~\eqref{eq:ak:ineq:DeGiorgi}. Since $(2s+3)/(s+2) > 1$ for all $s \in (-1,1)$ the above inequality  is super-linear in $a_k$, and hence  assuming that $a_0 \leq \eps_0$ is sufficiently small we obtain that $a_k \to 0$ as $k \to \infty$. To conclude the proof of blowup, we note that the equation is invariant under the rescaling $\tilde\theta(t,x) = ab^{-1-s}\theta(at,bx)$. As in the proof of Theorem~\ref{l:globaldecay}, setting $T=a$ and $b = (\|\theta_0\|_{L^1} T/\eps_0)^{1/(2+s)}$ we arrive at the bound
\[
\|\theta(T,\cdot)\|_{L^\infty} \leq \frac{b^{1+s}}{a} = \frac{ \|\theta_0\|^{1/(2+s)}}{\eps_0^{1/(2+s)} T^{(1+s)/(2+s)}}.
\]
This decay of the $L^\infty$ norm, valid for $s>-1$, then yields the desired contradiction.
\end{remark}

\section{Singularity formation via barriers}
\label{sec:Barrier}

In this sections we show that the equation \eqref{eq:CCF} cannot have a global in time, $C^1$, even solution which is monotone on $(0,\infty)$. We do it using barriers, which is arguably the simplest way to prove that a singularity emerges in finite time for this equation.

\begin{lemma} \label{l:hilbert-monotonicity}
Let $f$ and $g$ be two $C^1$, even functions. Assume that for some point $x_0 > 0$ we have
\begin{itemize}
\item $f(x) \geq g(x)$ if $0 \leq x \leq x_0$.
\item $f(x) \leq g(x)$ if $x \geq x_0$.
\end{itemize}
Then $Hf(x_0) \leq Hg(x_0)$.
\end{lemma}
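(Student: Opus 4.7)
\medskip

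\textbf{Proof proposal.} The plan is to reduce to a statement about a single even function with a sign change at $x_0$, apply the even-function formula for the Hilbert transform recalled in \eqref{eq:Hilbert:even}, and then verify that the integrand is pointwise nonpositive.

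First, set $h = f - g$. Then $h$ is $C^1$ and even. The hypothesis gives $h(x) \geq 0$ for $0 \leq x \leq x_0$ and $h(x) \leq 0$ for $x \geq x_0$, so by continuity $h(x_0) = 0$. The lemma is equivalent to the inequality $Hh(x_0) \leq 0$, because $H$ is linear.

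Next, I would use the even symmetry of $h$, together with the fact that $h(x_0) = 0$, to write
\[
Hh(x_0) \;=\; \frac{2 x_0}{\pi} \int_0^\infty \frac{h(y) - h(x_0)}{y^2 - x_0^2} \dd y \;=\; \frac{2 x_0}{\pi} \int_0^\infty \frac{h(y)}{y^2 - x_0^2} \dd y,
\]
which is the even-$\theta$ version of the Hilbert transform recalled in \eqref{eq:Hilbert:even}. Since $h \in C^1$ and $h(x_0) = 0$, the integrand is bounded near $y = x_0$, so the integral converges absolutely and no principal value is needed.

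Finally, I examine the sign of the integrand. For $0 < y < x_0$, the numerator $h(y) \geq 0$ and the denominator $y^2 - x_0^2 < 0$, so the integrand is $\leq 0$. For $y > x_0$, the numerator $h(y) \leq 0$ and the denominator $y^2 - x_0^2 > 0$, so again the integrand is $\leq 0$. Since $x_0 > 0$ multiplies a nonpositive integrand, we conclude $Hh(x_0) \leq 0$, i.e. $Hf(x_0) \leq Hg(x_0)$. There is no serious obstacle; the only thing to be careful about is using $h(x_0) = 0$ to justify dropping the principal value and rewriting the integral in the convenient form above.
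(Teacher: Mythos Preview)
Your proof is correct and follows essentially the same approach as the paper: the paper's proof also reduces to $H(f-g)(x_0)$ via linearity, applies the even-function formula \eqref{eq:Hilbert:even}, and observes that the integrand is nonpositive at every point. Your version simply adds the helpful detail that $h(x_0)=0$ removes the need for a principal value.
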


\begin{proof}
It follows from a direct application of formula \eqref{eq:Hilbert:even}. In this case, the integrand in the formula of $H(f-g)(x_0)$ is non positive at every point.
\end{proof}

\begin{lemma} \label{l:hilbert-for-tangent}
Let $\theta$ be $C^1$, even and monotone decreasing in $(0,\infty)$. Assume that the maximum
\[ \max \left\{ \theta(x) - A(1-|x|^{1/2})^+ : x \in \RR \right\},\]
is achieved at the point $x_0 \in (0,1/2]$.
Then 
\[ H \theta(x_0) \leq -c_0 A x_0^{1/2}\] for some positive universal constant $c_0$.
\end{lemma}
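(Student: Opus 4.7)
The plan is to compare $\theta$ against the tangent barrier built from $\phi(x) := A(1-|x|^{1/2})^+$. Since the maximum of $\theta-\phi$ is attained at $x_0$, we have
\[
\theta(x) \le \theta(x_0) + \phi(x) - \phi(x_0) =: \psi(x), \qquad x\in\R,
\]
with equality at $x_0$. To marry this with the monotonicity of $\theta$, I will introduce the even, continuous comparison function
\[
g(x) = \begin{cases} \theta(x_0), & |x|\le x_0,\\ \psi(x), & |x|>x_0.\end{cases}
\]
By the monotonicity of $\theta$ on $(0,\infty)$, $\theta(y)\ge\theta(x_0)=g(y)$ for $0\le y\le x_0$; and from $\theta\le\psi$, we have $\theta(y)\le\psi(y)=g(y)$ for $y\ge x_0$. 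Thus $\theta$ and $g$ agree at $x_0$ and ``cross'' in precisely the sense used in Lemma~\ref{l:hilbert-monotonicity}.

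Because $g$ has corners at $|x|=x_0$ and $|x|=1$, Lemma~\ref{l:hilbert-monotonicity} does not apply verbatim, but its proof through the even-function formula \eqref{eq:Hilbert:even} is robust. Since $\theta-g$ is even and vanishes at $x_0$,
\[
H(\theta-g)(x_0) = \frac{2x_0}{\pi}\int_0^\infty \frac{(\theta-g)(y)}{y^2-x_0^2}\,\dd y \le 0,
\]
because the integrand is pointwise nonpositive on each of $(0,x_0)$ and $(x_0,\infty)$, the sign of $\theta-g$ being opposite to that of $y^2-x_0^2$. Hence $H\theta(x_0)\le Hg(x_0)$, and the problem reduces to an explicit upper bound on $Hg(x_0)$.

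By construction $g-g(x_0)$ vanishes on $(0,x_0)$, so only $y\ge x_0$ contributes to $Hg(x_0)$, and there $g(y)-g(x_0)=\phi(y)-\phi(x_0)\le 0$ by the monotonicity of $\phi$. Thus the integrand is nonpositive throughout $(x_0,\infty)$, and it suffices to extract the quantitative bound from the sub-range $y\in[x_0,2x_0]$, on which $\phi(y)=A-Ay^{1/2}$ (this is where the hypothesis $x_0\le 1/2$ is used). The elementary factorization
\[
\frac{\phi(y)-\phi(x_0)}{y^2-x_0^2} \;=\; -\frac{A}{\left(y^{1/2}+x_0^{1/2}\right)(y+x_0)} \;\le\; -\frac{A}{3(1+\sqrt{2})\,x_0^{3/2}},
\]
valid for $y\in[x_0,2x_0]$ (using $y+x_0\le 3x_0$ and $y^{1/2}+x_0^{1/2}\le(1+\sqrt 2)x_0^{1/2}$), integrated over a window of length $x_0$ and multiplied by $2x_0/\pi$, gives $Hg(x_0)\le -c_0 A x_0^{1/2}$ with the explicit constant $c_0=\tfrac{2}{3(1+\sqrt 2)\pi}$.

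The conceptual point, and arguably the only nontrivial one, is that the $|x|^{1/2}$-singularity of $\phi'$ at the origin, effectively evaluated at $x_0$, is what yields the $Ax_0^{1/2}$ scaling rather than the $Ax_0$ that a coarse $\|\phi\|_{L^\infty}\le A$ bound would produce; this scaling is consistent with the $C^{1/2}$-regularization heuristics discussed in Section~\ref{sec:Conjectures}.
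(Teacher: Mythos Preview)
Your proof is correct and follows essentially the same route as the paper: define the same comparison function $g$ (your $\psi$ truncated to $\theta(x_0)$ on $[-x_0,x_0]$ coincides with the paper's $g$), use the crossing structure to get $H\theta(x_0)\le Hg(x_0)$ via the even Hilbert formula, and extract the bound from the sub-interval $[x_0,2x_0]$. Two minor differences: you are more careful than the paper in noting that $g\notin C^1$ so Lemma~\ref{l:hilbert-monotonicity} must be invoked through its proof rather than its statement; and you bound the integrand pointwise to get $c_0=\tfrac{2}{3(1+\sqrt2)\pi}$, whereas the paper changes variables $z=y/x_0$ to obtain the sharper $c_0=\tfrac{2}{\pi}\int_1^2\frac{z^{1/2}-1}{z^2-1}\,\dd z$.
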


\begin{proof}
Let 
\[ h = \max \left\{ \theta(x) - A(1-|x|^{1/2})^+ \right\}.\]
We know $\theta(x_0) = A(1-|x_0|^{1/2})^+ + h$ and also $\theta(y) \leq A(1-|y|^{1/2})^+ + h$ for any $y \in (0,\infty)$.

Let $g : \R \to \R$ be the even function 
\[ 
g(x) = \begin{cases}
h + A(1-|x_0|^{1/2})^+ , & \text{if } |x| < x_0, \\
h + A(1-|x|^{1/2})^+ , & \text{if } |x| \geq x_0.
\end{cases}
\]
Since we assumed that $\theta$ is monotone decreasing on $[0,\infty)$ we have that $\theta \geq g$ on $[0,x_0]$, and by assumption $\theta \leq g$ on $[x_0,\infty)$. We apply Lemma \ref{l:hilbert-monotonicity} to get that $H\theta(x_0) \leq H g(x_0)$. We will conclude the proof by showing that 
\[
Hg(x_0) \leq -c_0 A |x|^{1/2}
\]
 for some positive constant $c_0$. This is an explicit computation. Using \eqref{eq:Hilbert:even}, we have
\[ Hg(x_0) = -A \frac {2 x_0}{\pi} \int_{x_0}^1 \frac{y^{1/2} - x_0^{1/2}}{y^2-x_0^2} \dd y - A\frac {2x_0}{\pi} (1-x_0^{1/2}) \int_1^\infty \frac 1 {y^2-x_0^2} \dd y.\]
Note that both terms are negative. Since $x_0 \in (0,1/2]$, the interval $(x_0,2x_0)$ is part of the first domain of integration. Then
\[ Hg(x_0) \leq -A \frac {2x_0} \pi \int_{x_0}^{2x_0} \frac{y^{1/2} - x_0^{1/2}}{y^2-x_0^2} \dd y = -c_0 A x_0^{1/2}
\]
where
\[
c_0 = \frac{2}{\pi} \int_1^2 \frac{z^{1/2}-1}{z^2-1} \dd z
\]
is a positive constant.
\end{proof}

\begin{lemma}
\label{l:barrier}
Let $\theta$ be a solution to \eqref{eq:CCF} which is $C^1$,  even and monotone decreasing for $x \in (0,\infty)$. Assume $h = \theta(0,1/2)$. Then
\begin{equation} 
\label{e:barrier}  
\theta(t,x) \leq h + \frac A t (1-|x|^{1/2})^+
\end{equation}
for all $x \in \R$ and $t>0$. Here $A$ is a universal constant.
\end{lemma}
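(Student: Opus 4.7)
The plan is a standard first-touching-time barrier argument. Set
\[
\Theta(t,x) := h + \frac{A}{t}\bigl(1-|x|^{1/2}\bigr)^+, \qquad w(t,x) := \theta(t,x) - \Theta(t,x),
\]
where $A$ is a universal constant chosen slightly larger than $2/c_0$, with $c_0$ the positive constant furnished by Lemma~\ref{l:hilbert-for-tangent}. I aim to show $w\le 0$ throughout the $C^1$ existence interval of $\theta$, which is the asserted bound.

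First I would dispose of the easy regions. The characteristic ODE $\dot X=H\theta(t,X)$ is well-posed for our $C^1$ solution, and formula~\eqref{eq:Hilbert:even} together with the evenness and monotonicity of $\theta(t,\cdot)$ forces $H\theta(t,y)<0$ for every $y>0$. Hence characteristics on $(0,\infty)$ strictly decrease in time, so pulling back along the flow yields $\theta(t,y)\le \theta_0(y)$ for every $y>0$. Monotonicity of $\theta(t,\cdot)$ then gives, for $|x|\ge 1/2$,
\[
\theta(t,x) \le \theta(t,1/2) \le \theta_0(1/2) = h \le \Theta(t,x),
\]
so $w\le 0$ on $\{|x|\ge 1/2\}$. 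On the complementary region $|x|\le 1/2$, $\Theta(t,x)\ge h+(A/t)(1-2^{-1/2})\to +\infty$ as $t\to 0^+$, so $w<0$ there for all sufficiently small $t>0$ since $\theta$ is bounded by $\|\theta_0\|_{L^\infty}$.

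Now suppose for contradiction that $w$ first reaches the value $0$ at some time $t_0>0$, at a point $x_0$. By evenness we may take $x_0\ge 0$, and by the previous reduction $x_0\in[0,1/2)$. I would rule out $x_0=0$ by a Taylor expansion: since $\theta$ is $C^1$ and even, $\partial_x\theta(t_0,0)=0$, so $\theta(t_0,x)-\theta(t_0,0)=o(x)$, whereas $\Theta(t_0,0)-\Theta(t_0,x)=(A/t_0)|x|^{1/2}$ for $|x|<1$. If $w(t_0,0)=0$, then
\[
w(t_0,x)=\frac{A}{t_0}|x|^{1/2}+o(x)>0
\]
for every sufficiently small $|x|>0$, contradicting that $t_0$ is the first touching time. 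Hence $x_0\in(0,1/2)$.

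At this interior maximum of $w(t_0,\cdot)$ we have $\partial_x w(t_0,x_0)=0$, so $\partial_x\theta(t_0,x_0)=\partial_x\Theta(t_0,x_0)=-A/(2 t_0\sqrt{x_0})$. Lemma~\ref{l:hilbert-for-tangent} applied with its ``$A$'' equal to $A/t_0$ yields $H\theta(t_0,x_0)\le -c_0(A/t_0)\sqrt{x_0}$. Feeding both into
\[
\partial_t w=-H\theta\,\partial_x\theta+\frac{A}{t^2}\bigl(1-|x|^{1/2}\bigr)^+
\]
at $(t_0,x_0)$ gives
\[
\partial_t w(t_0,x_0) \le -\frac{c_0 A^2}{2 t_0^2}+\frac{A}{t_0^2}(1-\sqrt{x_0}) \le \frac{A}{t_0^2}\Bigl(1-\frac{c_0 A}{2}\Bigr)<0
\]
once $A>2/c_0$. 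This contradicts $\partial_t w(t_0,x_0)\ge 0$, which is forced by $(t_0,x_0)$ being a first touching. Hence no such $(t_0,x_0)$ exists and $w\le 0$ everywhere, as claimed. The main obstacle is the exclusion of $x_0=0$: the $|x|^{1/2}$ exponent in the barrier is precisely the one producing a downward cusp sharp enough to expel first touchings from the origin against the $C^1$ regularity of $\theta$, while remaining compatible with the homogeneity on which Lemma~\ref{l:hilbert-for-tangent} is built.
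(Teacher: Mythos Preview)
Your proof is correct and follows essentially the same first-touching barrier argument as the paper: dispose of $|x|\ge 1/2$ by monotonicity, rule out $x_0=0$ via the cusp of the barrier, and derive a contradiction at an interior touching point using Lemma~\ref{l:hilbert-for-tangent} together with the first-order conditions. The only notable difference is that you justify $\theta(t,y)\le\theta_0(y)$ via characteristics, whereas the paper simply observes $\theta_t=-H\theta\,\theta_x\le 0$ pointwise on $(0,\infty)$; the latter is cleaner since under a bare $C^1$ hypothesis $H\theta$ need not be Lipschitz and uniqueness of the characteristic ODE is not automatic.
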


\begin{proof}
Since $\theta$ is even and monotone decreasing on $(0,\infty)$, we have that $H\theta(t,x)$ is negative for $x>0$ and positive for $x<0$. Moreover $H\theta \, \theta_x \geq 0$ in $\R$. In particular, from \eqref{eq:CCF}, $\theta_t \leq 0$ and the function $\theta$ is monotone decreasing in time.

From the monotonicity properties of $\theta$ we immediately conclude that $\theta(x,t) \leq h$ whenever $x \geq 1/2$ and $t \geq 0$. Thus, the conclusion of the lemma could only be invalidated for $x < 1/2$.

For $t$ sufficiently small the right hand side in \eqref{e:barrier} will be larger than $\|\theta_0\|_{L^\infty}$ and thus the strict inequality holds for $0\leq x<1/2$. If this lemma were false, there would be a first time $t_0>0$ for which there exists an $x_0 \in [0,1/2]$ so that equality holds in \eqref{e:barrier}. Let us assume this in order to get a contradiction. We have
\begin{align*}
\theta(t_0,x_0) = h + \frac A {t_0} (1-|x_0|^{1/2})^+
\end{align*}
with $x_0 \in [0,1/2]$, and 
\begin{align*}
\theta(t,x) \leq h + \frac A {t} (1-|x|^{1/2})^+
\end{align*}
for all $x \in \R$ and all $0 \leq t \leq t_0$.
The point $x_0$ cannot be equal to zero since the differentiable function $\theta$ cannot be tangent from below at the cusp at $x=0$ of $h + \frac A {t_0} (1-|x|^{1/2})^+$. Moreover, at this point $(t_0,x_0)$ we have the classical first order conditions:
\begin{align*}
\theta_x(t_0,x_0) &= \partial_x \left[h + \frac A {t_0} (1-|x|^{1/2})^+ \right]_{|x=x_0} = -\frac{A}{2 t_0} x_0^{-1/2},
\end{align*}
and
\begin{align*}
\theta_t(t_0,x_0) &\geq \partial_t \left[h + \frac A {t} (1-|x_0|^{1/2})^+ \right]_{|t=t_0} \geq - \frac A {t_0^2}.
\end{align*}
Lastly, we apply Lemma \ref{l:hilbert-for-tangent} and obtain
\[ H\theta(t_0,x_0) \leq -c_0 \frac{A} {t_0} x_0^{1/2}.\]

Combining the three relations above we obtain
\[ \theta_t(t_0,x_0) + H\theta(t_0,x_0) \, \theta_x(t_0,x_0) \geq - \frac A {t_0^2} + \frac{A c_0}{2} \frac {A} {t_0^2}.\]
Choosing $A = 4/c_0$, the right hand side of the above inequality is strictly positive and we arrive at a contradiction with \eqref{eq:CCF}.
\end{proof}

\begin{theorem}
\label{thm:barrier}
Let $\theta(0,x)$ be even, monotone decreasing on $(0,\infty)$ and such that $\theta(0,1/2) < \theta(0,0)$. Then the equation \eqref{eq:CCF} cannot have a global in time $C^1$ solution.
\end{theorem}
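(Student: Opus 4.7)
The plan is to derive Theorem~\ref{thm:barrier} as a direct consequence of the barrier estimate of Lemma~\ref{l:barrier}, combined with the fact that for $C^1$ even, monotone solutions of \eqref{eq:CCF}, the value at the origin is frozen in time.

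First I would verify that a $C^1$ solution $\theta$ maintains the evenness and monotonicity-away-from-origin properties of the initial datum. Evenness follows from the invariance of \eqref{eq:CCF} under $x\mapsto -x$. Monotonicity away from the origin is preserved because the flow is a classical transport: differentiating the equation for $\theta_x$ gives $\partial_t \theta_x + H\theta \, \partial_x \theta_x = \Lambda \theta \, \theta_x$, which, as in the proof of Lemma~\ref{lem:fractional:good:properties} (even easier here since $\gamma=0$), preserves the sign condition $\sgn(x)\,\theta_x \leq 0$. Once these symmetries are established, one can invoke Lemma~\ref{l:barrier} to obtain
\[
\theta(t,x) \leq \theta(0,1/2) + \frac{A}{t}(1-|x|^{1/2})^+ \qquad \text{for all } (t,x) \in (0,\infty)\times \RR,
\]
where $A$ is a universal constant.

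Next I would observe that for a classical $C^1$ solution, $\theta(t,0)$ is constant in time. Indeed, evenness of $\theta(t,\cdot)$ forces $H\theta(t,0) = 0$ (from formula \eqref{eq:Hilbert:even}), so the characteristic curve starting at $x=0$ is the stationary trajectory $X(t)\equiv 0$, and the transport equation \eqref{eq:CCF} gives $\theta(t,0) = \theta(0,0)$ for all $t\geq 0$. Alternatively, by monotonicity $\theta(t,0) = \sup_\RR \theta(t,\cdot)$, and by Lemma~\ref{lem:maxprinciple} this supremum is non-increasing; meanwhile the transport structure forces equality throughout time.

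Evaluating the barrier inequality at $x=0$ then yields
\[
\theta(0,0) = \theta(t,0) \leq \theta(0,1/2) + \frac{A}{t},
\]
so that $t \leq A/\bigl(\theta(0,0) - \theta(0,1/2)\bigr)$. Since by hypothesis $\theta(0,0) - \theta(0,1/2) > 0$, this gives an explicit finite upper bound on the lifespan of the classical solution, contradicting the assumption of global existence. The main (minor) obstacle is the bookkeeping to ensure that the monotonicity and evenness hypotheses of Lemma~\ref{l:barrier} indeed persist throughout the $C^1$ lifespan; once that is in place the contradiction is immediate.
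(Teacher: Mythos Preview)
Your proposal is correct and follows essentially the same route as the paper: apply Lemma~\ref{l:barrier} to obtain the barrier $\theta(t,0)\leq \theta(0,1/2)+A/t$, and contradict this with the fact that $\theta(t,0)=\theta(0,0)$ for any $C^1$ solution. You supply more detail than the paper does (persistence of evenness and monotonicity, and an explicit lifespan bound), but the argument is the same.
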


\begin{proof}
Using Lemma \ref{l:barrier} we would obtain that $\theta(t,0) < \theta(0,0)$ for $t$ sufficiently large. However, for any $C^1$ solution to \eqref{eq:CCF}, $\theta(0,t)$ should be constant.
\end{proof}

\begin{remark}
The generalization of the above procedure to the active scalar equation \eqref{eq:CCF:smooth:drift}, where the drift velocity is given by $u = H \Lambda^s \theta$, is straightforward. The main difference is that the barrier $g(x)$ has now to be taken as 
\[ 
g_s(t,x) = \begin{cases}
h + \frac{A}{t}(1-|x_0|^{(1+s)/2})^+ , & \text{if } |x| < x_0, \\
h + \frac{A}{t}(1-|x|^{(1+s)/2})^+ , & \text{if } |x| \geq x_0.
\end{cases}
\]
Indeed, when $s\in (-1,1)$, the formula \eqref{eq:Hilbert:even}  becomes
\[
H \Lambda^s \theta(x) = c_s (1-s) \int_0^\infty \left(\theta(y) - \theta(x) \right) \frac{(y+x)|y+x|^s - (y-x)|y-x|^s}{|y^2 - x^2|^s (y^2 - x^2)} \dd y
\]
for all $x\geq 0$ and even functions $\theta$. Here $1/4\leq c_s \leq 1/2$ is a constant. In particular, Lemma~\ref{l:hilbert-monotonicity} holds without change, since $z \mapsto z^{1+s}$ is an increasing function for our range of $s$. Moreover, as in Lemma~\ref{l:hilbert-for-tangent}, an explicit computation shows that if the maximum of $\theta(x) - A (1- |x|^{(1+s)/2})^+$ is attained at a point $x_0 \in (0,1/2]$, we have 
\[
H\Lambda^s \theta(x_0) \leq H\Lambda^s g_s(1,x_0) \leq -c_{1,s} A x_0^{(1-s)/2}
\] 
for some $c_{1,s}>0$ that depends only on $s$. Here it is again important that $1+s>0$. To conclude, we proceed as in the proof of Lemma~\ref{l:barrier}, and use that 
\[
(\partial_t g_s + H\Lambda^s g_s\; \partial_x g_s)(t_0,x_0) \geq \frac{A}{t_0^2} \left(-1 + \frac{A(1+s) c_{1,s}}{2} \right)  > 0 
\]
for any $x_0 \in (0,1/2]$, once $A$ is chosen sufficiently large, depending only on $s$.
\end{remark}

\section{A possible regularization effect}
\label{sec:Conjectures}
\label{sec:Stationary}

In the proof of Section \ref{sec:CCF:revisited} we show that $\Lambda \theta$ must become $+\infty$ in finite time at any point where a local maximum of $\theta$ is attained. Intuitively, the graph of $\theta$ creates a cusp pointing up at every one of its local maximum points. This can be visualized numerically. We have experimented with a simple code which is available on the website \url{http://math.uchicago.edu/~luis/pde/hilbert.html}.

The numerical method which was used is inspired by monotone finite difference schemes for the Hamilton-Jacobi equation following ideas from \cite{Oberman06}. in the numerics, we observe that for any initial data $\theta_0$, the local maximums of $\theta$ flow into cusps pointing up while the local minimums tend to open up and seem to become more regular. If the initial data is very rough, the equation seems to regularize the solution in some H\"older norm, possibly $C^{1/2}$. The mechanism of this regularization process is perhaps similar to the Hamilton-Jacobi equation $u_t + u_x^2 = 0$. It is well known that viscosity solutions to this equation become immediately Lipschitz and semiconvex for any initial data $u_0$.

\subsection{The H\"older \texorpdfstring{$1/2$}{1/2} conjecture}

Note that the function 
\[ \theta(t,x) = -|x|^{1/2} - Ct \]
is in fact an exact solution to the equation \eqref{eq:CCF} away from $x=0$ for some value of $C$. It is not a weak solution in the sense of Section~\ref{d:weak-solution} only because it is unbounded. However, what we want to stress is the singularity of order exactly $1/2$. What this example suggests is that a cusp of order $1/2$ does not deteriorate by the flow. Moreover, we conjecture that solutions of the equation \eqref{eq:CCF} have an a priori estimate in $C^{1/2}$ of the following form.

\begin{conjecture}
\label{conj:Inviscid}
Let $\theta$ be any bounded solution of \eqref{eq:CCF} in the time interval $[0,T]$. The following estimate holds
\[ \sup_{x,y \in \R} \frac{\theta(T,x) - \theta(T,y)}{|x-y|^{1/2}} \leq \frac{C \|\theta\|_{L^\infty}^{1/2} |x-y|^{1/2}}{T^{3/2} }.\]
Here $C$ is a universal constant.
\end{conjecture}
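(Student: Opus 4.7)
Since the estimate is scale-invariant under $\theta \mapsto \theta_{ab}$, by choosing $a,b$ appropriately we may reduce to proving, for any smooth solution with $\|\theta\|_{L^\infty} \leq 1$, the uniform bound $[\theta(1,\cdot)]_{C^{1/2}(\RR)} \leq C$. Working within the vanishing viscosity framework, it suffices to obtain an $\varepsilon$-independent bound for the regularized solutions of $\partial_t \theta + H\theta \, \partial_x \theta = \varepsilon \Delta \theta$.

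The main tool would be a modulus-of-continuity method in the spirit of Kiselev-Nazarov-Volberg. Motivated by the self-similar cusp $-C(1-x^2/t^2)_+^{1/2}$ of Castro-C\'ordoba and by the barrier of Lemma~\ref{l:barrier}, one is tempted to postulate the time-dependent modulus $\omega(t,r) = \min\{2, K\, t^{-1/2} r^{1/2}\}$ for a large universal constant $K$, and to argue by contradiction that $|\theta(t,x)-\theta(t,y)| \leq \omega(t,|x-y|)$ is preserved by the flow. At a first breakthrough $(t_0,x_0,y_0)$ with $r_0 = |x_0-y_0|$ in the singular regime $K t_0^{-1/2} r_0^{1/2} < 2$, we would use
\[
\partial_t\bigl(\theta(x_0) - \theta(y_0)\bigr) = -H\theta(x_0)\,\theta_x(x_0) + H\theta(y_0)\,\theta_x(y_0) \geq \partial_t \omega(t_0, r_0),
\]
together with the fact that the modulus forces a pointwise gradient bound $|\theta_x(x_0)|,\, |\theta_x(y_0)| \lesssim t_0^{-1/2} r_0^{-1/2}$, and the pointwise Hilbert estimate of Lemma~\ref{lem:pointwise:Hilbert} applied slicewise along the graph of $\theta$ between $x_0$ and $y_0$, in order to control $H\theta(x_0)-H\theta(y_0)$ purely in terms of $\omega$. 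The aim is to show that for $K$ sufficiently large, the right-hand side is strictly less than $\partial_t \omega(t_0, r_0) = -\tfrac12 K t_0^{-3/2} r_0^{1/2}$, producing the contradiction.

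The main obstacle, and the reason the estimate remains conjectural, is that in the absence of a bona fide dissipation term the nonlocal transport $-H\theta \cdot \theta_x$ is not easily bounded by the modulus alone. Unlike in the critical SQG setting, where the $\Lambda \theta$ term provides the decisive negativity at the breakthrough pair, here the required negativity must be extracted from the transport structure itself; the velocity $H\theta$ at a fixed point depends on the \emph{global} distribution of $\theta$, which a two-point modulus does not directly constrain. Two natural auxiliary inputs would close the loop: first, the conjectured lower bound $\Lambda \theta \geq -A(t,\theta_0)$ of Conjecture~\ref{conj:Lambda:lower:bound}, which via Theorem~\ref{thm:Holder} already delivers $C^{1/2}$ regularity in the symmetric monotone setting; second, the $L^\infty$-decay of Theorem~\ref{l:globaldecay} together with its rescaled analogues, which control $\|\theta(t_0,\cdot)\|_{L^\infty}$ at the hypothetical breakthrough time.

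A natural staging of the problem would be to first establish the estimate in the restricted class of even, non-negative, monotone-decreasing data, where Lemma~\ref{lem:nonlinear:positivity} provides an unconditional positive contribution from the nonlinearity at every dyadic scale and the origin is automatically an extremal point, and then to remove the symmetry hypothesis via a localization or rearrangement argument. I expect the hardest step to be the passage from this symmetric, geometrically constrained situation to the general case, since the monotone structure is what makes the implicit ``hidden'' dissipation of the nonlinearity manifest and quantifiable.
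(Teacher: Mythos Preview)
The statement is a \emph{conjecture}, and the paper explicitly says ``We have not been able to prove Conjecture~\ref{conj:Inviscid}.'' There is therefore no proof in the paper to compare against. Your write-up is, appropriately, not a proof but a research outline together with an honest account of the obstruction; you yourself flag that ``the estimate remains conjectural.''

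Your discussion is well aligned with the paper's own heuristics in Section~\ref{sec:Conjectures}: the paper also points to the self-similar solutions, the barrier of Lemma~\ref{l:barrier}, the stationary result Theorem~\ref{thm:stationary}, and the link via Conjecture~\ref{conj:Lambda:lower:bound} and Theorem~\ref{thm:Holder} as supporting evidence. The modulus-of-continuity scheme you sketch is a natural first attempt, and you correctly isolate the essential difficulty: unlike the KNV argument for critical SQG, there is no genuine dissipative term here, so any negativity at a breakthrough pair must come from the transport itself, and $H\theta$ at a point depends on the global profile of $\theta$, which a two-point modulus does not pin down. That is precisely the gap, and it is a real one; nothing in your outline closes it, nor does the paper offer a mechanism that would.

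Two small technical remarks on your sketch. First, Lemma~\ref{lem:pointwise:Hilbert} requires evenness and monotonicity, so invoking it ``slicewise'' at a generic breakthrough pair is not legitimate outside the symmetric class; you acknowledge this later, but it means the first displayed inequality in your argument is not actually available in the general setting. Second, your scaling reduction to $\|\theta\|_{L^\infty}\le 1$, $T=1$, and a pure $C^{1/2}$ seminorm bound is the natural reading of the conjecture, but note that the displayed inequality in the paper carries an extra factor of $|x-y|^{1/2}$ and a $T^{-3/2}$; taken literally that would be a Lipschitz bound, which is inconsistent with the known cusp formation. Your interpretation (a $C^{1/2}$ bound with the two-parameter scaling of Section~\ref{sec:Prelim}) is almost certainly the intended one.
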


Conjecture \ref{conj:Inviscid} is supposed to hold for as long as the solution exist. Moreover, the vanishing viscosity limits of \eqref{eq:CCF}, with any bounded datum $\theta_0$, would satisfy Conjecture \ref{conj:Inviscid} for all $T>0$. We make this explicit as follows. For any $\eps>0$, we define $\theta^\eps$ to be the solution to
\begin{align*}
\theta^\eps_t + H\theta^\eps \partial_x \theta^\eps - \eps \partial_{xx} \theta^\eps &= 0, \\
\theta^\eps(0,x) &= \theta_0(x).
\end{align*}
Then, we believe $\theta^\eps$ satisfies the a priori estimate
\[ \sup_{x,y \in \R} \frac{\theta^\eps(t,x) - \theta^\eps(t,y)}{|x-y|^{1/2}} \leq \frac{C \|\theta_0\|_{L^\infty}^{1/2}  |x-y|^{1/2}}{t^{3/2}},
\]
where $C$ is a universal constant independent of $\eps$. The precise form of the right hand side can be easily derived from scaling considerations.

As $\eps \to 0$, we expect $\theta^\eps$ converges to a weak solution $\theta$ of the equation \eqref{eq:CCF} which satisfies the same estimate.

Assuming the result on the above conjecture is still valid after adding fractional dissipation $\Lambda^\gamma$ to the equations, in view of the natural parabolic scaling, the $C^{1/2}$ regularity threshold makes the power  $\gamma = 1/2$ critical. Applying Theorem 1.1 in \cite{Silvestre10b} 
it would follow that $\theta$ is $C^1$ and therefore a global classical solution.

The following conjecture, is in fact a consequence of Conjecture \ref{conj:Inviscid}.

\begin{conjecture}
\label{conj:Supercritical}
Consider $\theta_0$ as in Conjecture~\ref{conj:Inviscid}. The vanishing viscosity limits of the supercritically dissipative Hilbert model \eqref{eq:CCF:dissipative} with $\gamma >1/2$, lie in $L^\infty([0,\infty); C^\infty)$.
\end{conjecture}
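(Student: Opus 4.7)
The strategy is to reduce Conjecture~\ref{conj:Supercritical} to a linear drift--fractional-diffusion problem with H\"older drift, at which point the subcritical regularity theory of \cite{Silvestre10b} applies. For each $\eps>0$ let $\theta^\eps$ solve
\[
\partial_t \theta^\eps + H\theta^\eps \, \partial_x \theta^\eps + \Lambda^\gamma \theta^\eps - \eps \Delta \theta^\eps = 0, \qquad \theta^\eps(0,\cdot) = \theta_0,
\]
which for each $\eps>0$ admits a smooth solution by classical parabolic theory, and for which Lemma~\ref{lem:maxprinciple} gives $\|\theta^\eps(t,\cdot)\|_{L^\infty} \leq \|\theta_0\|_{L^\infty}$ uniformly in $\eps$ and $\gamma$. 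The first crucial input is Conjecture~\ref{conj:Inviscid} in its fractionally dissipative form: since both the $\Lambda^\gamma$ and $-\eps\Delta$ terms are dissipative, one expects the a priori H\"older bound
\[
[\theta^\eps(t,\cdot)]_{C^{1/2}} \leq C\|\theta_0\|_{L^\infty}^{1/2} t^{-3/2}
\]
to hold uniformly in $\eps>0$ and $\gamma \in [0,1)$, for every $t>0$.

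Freezing the drift $u^\eps := H\theta^\eps$, view $\theta^\eps$ as the solution of the \emph{linear} equation $\partial_t \theta^\eps + u^\eps \, \partial_x \theta^\eps + \Lambda^\gamma \theta^\eps = \eps \Delta \theta^\eps$. Standard estimates show that $H$ maps $C^{1/2}\cap L^\infty$ into $C^\sigma$ for every $\sigma<1/2$ (the mild logarithmic loss at the endpoint being harmless), so $u^\eps$ is bounded in $C^\sigma$ uniformly in $\eps$. Since $\gamma>1/2$ one can pick $\sigma$ with $1-\gamma < \sigma < 1/2$, placing the problem strictly in the subcritical regime of Theorem~1.1 of \cite{Silvestre10b}. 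That theorem yields a uniform-in-$\eps$ interior estimate $\theta^\eps \in C^{1,\alpha}_{\mathrm{loc}}((t_0,\infty)\times\R)$ with $\alpha = \sigma + \gamma - 1 > 0$, for any fixed $t_0>0$. The term $-\eps\Delta \theta^\eps$ does not degrade these bounds, since Silvestre's arguments are stable under additional positive dissipation.

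To promote $C^{1,\alpha}$ regularity to $C^\infty$, I would differentiate the equation: $w := \partial_x \theta^\eps$ solves
\[
\partial_t w + u^\eps \, \partial_x w + \Lambda^\gamma w - \eps \Delta w = (\Lambda \theta^\eps)\, w,
\]
whose right-hand side is now uniformly bounded in $C^\alpha_{\mathrm{loc}}$. Iterating the Schauder-type theory for subcritical drift--fractional-diffusion equations produces uniform-in-$\eps$ bounds on $\theta^\eps$ in $C^{k,\alpha}_{\mathrm{loc}}((t_0,\infty)\times\R)$ for every $k\in\NN$, and passing $\eps\downarrow 0$ along a subsequence yields $\theta \in L^\infty([t_0,\infty); C^\infty)$, which gives Conjecture~\ref{conj:Supercritical}. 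The principal obstacle is the very first step, namely establishing a form of Conjecture~\ref{conj:Inviscid} which is robust under both the fractional dissipation $\Lambda^\gamma$ and the vanishing classical viscosity $\eps\Delta$, with constants independent of $\eps$ and non-degenerate as $\gamma\downarrow 1/2$; once that is in hand, the remainder is a by-now-standard subcritical Schauder bootstrap.
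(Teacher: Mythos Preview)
The statement is a conjecture, and the paper does not prove it; it only remarks that, assuming Conjecture~\ref{conj:Inviscid} persists under fractional dissipation, Theorem~1.1 of \cite{Silvestre10b} yields $C^1$ regularity and hence a global classical solution. Your proposal follows exactly this route in greater detail---using the conjectured $C^{1/2}$ bound on $\theta$ to place the frozen linear problem in the subcritical regime of \cite{Silvestre10b} and then bootstrapping---and you correctly identify that the entire argument is conditional on establishing a dissipation-robust version of Conjecture~\ref{conj:Inviscid}.
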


\subsection{H\"older continuity for a stationary problem}

We have not been able to prove Conjecture \ref{conj:Inviscid}. We can, however, prove a stationary version of the same result under the monotone regime. In this case we consider a bounded right hand side.

\begin{theorem}
\label{thm:stationary}
Consider a $C^1$ smooth solution $\theta$ of the stationary problem
\begin{align}
H\theta \, \partial_x \theta = f
\label{eq:stationary:hilbert}
\end{align}
where $0 \leq f \in L^\infty$. If $\theta$ is even, monotone decreasing away from the origin, and non-negative, then the following estimate holds
\[ 
|\theta(x_1) - \theta(x_2)| \leq C \|f\|_{L^\infty}^{1/2} |x_1 - x_2|^{1/2}
\]
for all $x_1 \neq x_2 \in \RR$. Here $C>0$ is a universal constant. 
\end{theorem}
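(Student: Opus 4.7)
The plan is to use the stationary PDE $H\theta \, \partial_x\theta = f$ together with the local nonlinearity lower bound of Lemma~\ref{lem:nonlinear:positivity} to turn the $L^\infty$ bound on $f$ into a $C^{1/2}$ bound on $\theta$. By evenness of $\theta$ it suffices to consider $0 \leq x_2 < x_1$. Integrating the equation over $(x_2,x_1)$ and applying Lemma~\ref{lem:nonlinear:positivity} gives
\[
\frac{1}{4\pi}\,\log\!\frac{x_1+x_2}{x_1-x_2}\,\bigl(\theta(x_2)-\theta(x_1)\bigr)^2 \;\leq\; \int_{x_2}^{x_1} f\,dx \;\leq\; \|f\|_{L^\infty}(x_1-x_2),
\]
so that
\[
|\theta(x_1)-\theta(x_2)|^2 \;\leq\; \frac{4\pi\,\|f\|_{L^\infty}(x_1-x_2)}{\log\!\frac{x_1+x_2}{x_1-x_2}}.
\]
This already yields the desired estimate whenever the ratio $(x_1+x_2)/(x_1-x_2)$ is bounded below by some fixed constant strictly greater than $1$, since then the logarithmic denominator is bounded below. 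Concretely, when $x_1 \leq 2 x_2$, the denominator is at least $\log 3$, giving $|\theta(x_1)-\theta(x_2)| \leq C\|f\|_{L^\infty}^{1/2}|x_1-x_2|^{1/2}$ with a universal $C$.

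The nontrivial case is when $x_1 \gg x_2$, including $x_2=0$, where the logarithmic factor degenerates and the direct estimate is useless. Here I would insert a dyadic chain. Set $z_k = x_1/2^k$ for $k=0,1,2,\dots$. Adjacent points satisfy $(z_k+z_{k+1})/(z_k-z_{k+1})=3$, so the ``comparable scales'' estimate from the previous paragraph yields
\[
|\theta(z_k)-\theta(z_{k+1})| \;\leq\; C\,\|f\|_{L^\infty}^{1/2} z_{k+1}^{1/2} \;=\; C\,\|f\|_{L^\infty}^{1/2}\, 2^{-(k+1)/2}\, x_1^{1/2}.
\]
The geometric series $\sum_k 2^{-(k+1)/2}$ converges, so summing the telescoping differences gives $|\theta(x_1)-\theta(z_k)| \leq C'\|f\|_{L^\infty}^{1/2}\,x_1^{1/2}$ for every $k$. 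If $x_2=0$, continuity of $\theta$ at $0$ lets one pass $k\to\infty$ and conclude $|\theta(x_1)-\theta(0)|\leq C'\|f\|_{L^\infty}^{1/2} x_1^{1/2}=C'\|f\|_{L^\infty}^{1/2}|x_1-x_2|^{1/2}$. If $x_2>0$, choose $k$ so that $z_k\in[x_2,2x_2]$ and apply the direct estimate on $[x_2,z_k]$ to get $|\theta(z_k)-\theta(x_2)|\leq C\|f\|_{L^\infty}^{1/2}(z_k-x_2)^{1/2}\leq C\|f\|_{L^\infty}^{1/2} x_2^{1/2}$. Combining the two bounds and using $x_1>2x_2$, which forces $x_1^{1/2}\leq \sqrt{2}\,(x_1-x_2)^{1/2}$, closes this case as well.

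Finally, the case $x_2<0<x_1$ reduces to the previous ones by evenness: $\theta(x_2)=\theta(|x_2|)$, and, after possibly swapping, we may assume $|x_2|\leq x_1$; then the bound on $(|x_2|,x_1)$ plus the trivial estimate $x_1-|x_2|\leq x_1+|x_2|=|x_1-x_2|$ gives the result.

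I do not anticipate a serious obstacle, since Lemma~\ref{lem:nonlinear:positivity} already encodes the analytic heart of the argument; the only subtlety is the degeneracy of the logarithmic factor, which is precisely what the dyadic decomposition is designed to bypass. In effect, the proof shows that the nonlinearity of \eqref{eq:CCF} has a built-in coercivity which converts an $L^\infty$ forcing into an $H^{1/2}$-type control on $\theta$ via dyadic summation, and this coercivity is enough to propagate the sharp H\"older scale $1/2$ suggested in Conjecture~\ref{conj:Inviscid}.
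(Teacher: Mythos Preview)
Your proof is correct and follows the same strategy as the paper: integrate the equation, invoke Lemma~\ref{lem:nonlinear:positivity}, and then handle the degeneracy of the logarithmic factor via a dyadic chain. The only difference is organizational: you split into the cases $x_1\le 2x_2$ and $x_1>2x_2$ and run the dyadic chain $z_k=x_1/2^k$ toward $0$, stopping near $x_2$; the paper instead uses a single dyadic chain $y_k=x_2+2^{-k}(x_1-x_2)$ converging to $x_2$, which gives $(y_k+y_{k+1})/(y_k-y_{k+1})\ge 3$ uniformly and hence treats all $0\le x_2<x_1$ at once without a case distinction.
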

\begin{proof}
 We appeal to Lemma~\ref{lem:nonlinear:positivity}. Let $0<x_2<x_1$ be arbitrary.
 Then integrating \eqref{eq:stationary:hilbert} from $x_2$ to $x_1$ we arrive at
\begin{align}
(x_1-x_2) \|f\|_{L^\infty} \geq \int_{x_2}^{x_1} f \dd x 
&= \int_{x_2}^{x_1} H\theta \, \partial_x \theta \dd x\notag\\
&\geq \frac 14  (\theta(x_2)-\theta(x_1))^2 \log \left( \frac{x_1+x_2}{x_1-x_2} \right).
\label{eq:stationary:main:bound}
\end{align}

Now, for $0 \leq x_2 < x_1$, by the continuity of $\theta$ at $x_2$ and the estimate \eqref{eq:stationary:main:bound}, we have that
\begin{align}
\theta(x_2) - \theta(x_1) &= \sum_{k=0}^\infty \theta(x_2 + 2^{-k-1} (x_1-x_2) ) - \theta(x_2 + 2^{-k} (x_1-x_2)), \notag \\
&\leq 2 \|f\|_{L^\infty}^{1/2}  \sum_{k=0}^\infty \left( \frac{2^{-k-1} (x_1-x_2)}{\log\left( 3 + 2^{k+2} \frac{x_2}{x_1-x_2}\right)} \right)^{1/2}, 
\notag \\
&\leq \frac{2 (1+\sqrt 2)}{ \log 3} \|f\|_{L^\infty}^{1/2}(x_1-x_2)^{1/2}.
\label{eq:stationary:main:bound:2}
\end{align} 
We have proven  the inequality whenever $x_1$ and $x_2$ are both positive. The general case follows easily given that $\theta$ is an even function.
\end{proof}

\begin{remark}
 The proof of Theorem~\ref{eq:stationary:hilbert} applies to even, monotone decreasing away from the origin, continuous at the origin, stationary solutions 
 of 
 \[
 H\Lambda^s \theta \partial_x \theta  = f
 \]
 when $0 \leq f \in L^\infty$ and $s \in (-1,1)$.
 In this case it follows from Lemma~\ref{lem:nonlinear:positivity:drift} and the argument in \eqref{eq:stationary:main:bound:2} that
\[
|\theta(x_1) - \theta(x_2)| \leq \frac{8 \|f\|_{L^\infty}^{1/2} }{\left( \frac{1-s}{s} (1 - 3^{-s} )  \right)^{1/2}} |x_1 - x_2|^{(1+s)/2}
\]
for all $0 \leq x_2 < x_1$.
\end{remark}

\subsection{A related conjecture: one sided bounds for \texorpdfstring{$\Lambda \theta$}{\Lambda \theta}}

To conclude the section we discuss what we believe is an intimate connection between Conjecture~\ref{conj:Inviscid} and lower bounds for $\Lambda \theta(t,\cdot)$. We begin with another conjecture about the evolution~\eqref{eq:CCF}, which has the geometric meaning that the possible cusps forming in finite time will always open downwards. The formation of only downward opening cusps is consistent with the numerical simulations of \eqref{eq:CCF}.

\begin{conjecture}
\label{conj:Lambda:lower:bound}
Consider $\theta_0$ as in Conjecture~\ref{conj:Inviscid}. The vanishing viscosity limits of \eqref{eq:CCF}  obey
\begin{align} \label{eq:lower:conj}
\Lambda \theta(t,x) \geq - A(t)
\end{align}
for some $A(t) = A(t,\theta_0)\geq 0$,
for every $t>0$.
\end{conjecture}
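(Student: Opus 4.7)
The natural strategy is to work with vanishing viscosity approximations $\theta^\eps$ solving $\partial_t \theta^\eps + H\theta^\eps \, \partial_x \theta^\eps = \eps \partial_{xx}\theta^\eps$ with $\theta^\eps(0,\cdot)=\theta_0$, and to prove a lower bound $\Lambda\theta^\eps \geq -A(t,\theta_0)$ that survives the limit $\eps\to 0$. Using Proposition~\ref{p:identity} as in the proof of Theorem~\ref{p:blowup}, the quantity $w^\eps := \Lambda\theta^\eps$ satisfies the PDE
\[
\partial_t w^\eps + H\theta^\eps \, \partial_x w^\eps - \eps \, \partial_{xx} w^\eps = \tfrac 12 (w^\eps)^2 - \tfrac 12 (\partial_x\theta^\eps)^2 + G[\theta^\eps],
\]
where $G[\theta^\eps](x) = \|(H\theta^\eps(\cdot)-H\theta^\eps(x))/(\cdot-x)\|_{\dot H^{1/2}}^2 \geq 0$. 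The quadratic $+\tfrac 12 (w^\eps)^2$ and the nonnegative $G$ are exactly the terms one wants for propagating a lower bound on $w^\eps$; the only obstruction is the sign-indefinite term $-\tfrac 12(\partial_x \theta^\eps)^2$.

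The plan is to track $m^\eps(t) := \inf_x w^\eps(t,x)$. At a (presumed interior) infimum point $x_*(t)$, the second-order conditions give $\partial_x w^\eps(x_*) = 0$ and $\partial_{xx} w^\eps(x_*)\geq 0$, so that formally
\[
\dot m^\eps(t) \;\geq\; \tfrac 12 (m^\eps(t))^2 \;-\; \tfrac 12 (\partial_x\theta^\eps(t,x_*))^2.
\]
If one can establish a structural estimate of the form $|\partial_x \theta^\eps(t, x_*)| \leq C(t)\bigl(1 + |m^\eps(t)|\bigr)$ for some locally bounded $C(t)$, then the right-hand side is controlled by $-C(t)\bigl(1+(m^\eps)^2\bigr)$ plus a positive $(m^\eps)^2/2$, yielding a Riccati-type differential inequality that prevents $m^\eps \to -\infty$ in finite time. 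Such a bound is consistent with the geometric picture: at a point where $\Lambda\theta$ is very negative the profile of $\theta$ is a smoothed downward cusp, for which a $C^{1/2}$ control as in Conjecture~\ref{conj:Inviscid} would give, via interpolation on a scale $\sim |m^\eps|^{-1}$, precisely $|\partial_x \theta^\eps| \lesssim |m^\eps|$ at $x_*$.

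The main obstacle is exactly this last step: ruling out that $|\partial_x \theta^\eps|$ concentrates catastrophically at points where $w^\eps$ is very negative. This is a nonlocal semiconvexity-type statement, and we see no elementary mechanism to enforce it from the evolution alone. A viable route may be to first prove the stronger Conjecture~\ref{conj:Inviscid} and deduce the present statement by interpolation near the minimum. Alternatively, one could try to discover a coercive quantity coupling $w^\eps$ and $\partial_x \theta^\eps$, in the spirit of the Aronson--B\'enilan or Oleinik one-sided estimates for scalar conservation laws and Hamilton--Jacobi equations. We do not currently have a candidate for such a quantity, which is why the statement is recorded as a conjecture.
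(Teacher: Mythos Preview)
Your analysis is accurate and aligns with the paper's own discussion: the statement is a \emph{conjecture}, and the paper does not provide a proof. In Remark~\ref{rem:Lambda:lower:bound} the authors derive the same PDE for $\Lambda\theta$ that you write for $w^\eps$, and they identify the same obstruction you do---namely that the term $-\tfrac12(\partial_x\theta)^2$ has the wrong sign, so the right-hand side is not manifestly nonnegative and a lower bound on $\Lambda\theta$ cannot be propagated by a naive minimum principle. Your proposal does not close this gap either (nor do you claim to); your suggestion that a $C^{1/2}$ bound from Conjecture~\ref{conj:Inviscid} would give $|\partial_x\theta^\eps|\lesssim |m^\eps|$ near the minimum is plausible heuristics, but it presupposes the stronger conjecture, so it is not an independent route. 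In short, your write-up is a correct diagnosis of why the statement remains open, and it matches the paper's own assessment.
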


On the one hand, as discussed in Remark~\ref{rem:Lambda:lower:bound} of Section~\ref{sec:CCF:revisited}, the integral identity of Proposition~\ref{p:identity} is not sufficient (just barely) for showing that a lower bound for $\Lambda \theta$ is propagated forward in time by the evolution~\ref{eq:CCF}. 

On the other hand, if  a lower bound of the type \eqref{eq:lower:conj} would hold, we show here that in the presence of additional symmetries (evenness and monotonicity away from the origin), the H\"older-$1/2$ continuity of $\theta$ directly follows (as claimed by Conjecture~\ref{conj:Inviscid}), but only at points away from the origin. 
 
\begin{theorem}
\label{thm:Holder}
Let $\theta : \R \to \R$ be even, continuous, non-negative, and monotone decreasing in $[0,+\infty)$. Assume $\Lambda \theta \geq -A$. Then, we have
\begin{equation} \label{e:c12estimate}  \theta(x_1) - \theta(x_2) \leq C \max\left(\|\theta\|_{L^\infty}, A x_1 \right) \left(\frac{x_1-x_2}{x_1} \right)^{1/2}
\end{equation}
for all  $0 < x_1 < x_2$, where $C>0$ is a universal constant.
\end{theorem}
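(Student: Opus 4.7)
The plan is to combine three ingredients: the pointwise Hilbert lower bound of Lemma~\ref{lem:pointwise:Hilbert}, the integrated form of the hypothesis $\Lambda\theta \ge -A$ obtained through $\Lambda\theta = -\partial_x H\theta$, and a qualitative $L^\infty$-type upper bound on $-H\theta$ at the reference point. The statement as written is ambiguous (with $0<x_1<x_2$ and $\theta$ decreasing on $[0,\infty)$ the inequality is trivial); I work under the natural reading $0<x_2<x_1$, with $D:=\theta(x_2)-\theta(x_1)\ge 0$ and $h:=x_1-x_2$.

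Applying Lemma~\ref{lem:pointwise:Hilbert} first gives
\[
-H\theta(x_2) \;\ge\; \frac{D}{\pi}\log\!\Bigl(\frac{x_1+x_2}{x_1-x_2}\Bigr).
\]
Since $\Lambda\theta = -\partial_x H\theta \ge -A$, integrating in $x$ from $x_2$ to $x_1$ yields $-H\theta(x_2) \le -H\theta(x_1) + A h$. Chaining these two, one arrives at a single-scale estimate of the form
\[
D\,\log\!\Bigl(\frac{x_1+x_2}{x_1-x_2}\Bigr)\;\le\;\pi\bigl(-H\theta(x_1)\bigr)+\pi A h.
\]

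The remaining task is to upper bound $-H\theta(x_1)$ by a multiple of $\max(\|\theta\|_\infty, A x_1)$. Using the even-function representation~\eqref{eq:Hilbert:even}, one decomposes the integration domain into a ``far'' region $y\notin(x_1/2,2x_1)$, where the kernel is benign and the bound $0\le\theta\le\|\theta\|_\infty$ together with monotonicity gives a uniform $C\|\theta\|_\infty$ estimate, and a ``near'' region $y\in(x_1/2,2x_1)$, where a naive $L^\infty$ argument would produce a logarithmic divergence. The near region is controlled by using the hypothesis $\Lambda\theta\ge-A$ to bound the local oscillation of $\theta$ at scales comparable to $x_1$. Substituting the resulting bound on $-H\theta(x_1)$ into the displayed inequality above yields a logarithmic single-scale estimate, which is then upgraded to the square-root rate by applying the estimate to a dyadic sequence of nested intervals $(x_2,y_k)$ with $y_k=x_2+2^{-k}h$, and summing in the spirit of~\eqref{eq:stationary:main:bound:2}.

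The principal obstacle is the upper bound on $-H\theta(x_1)$: the one-sided hypothesis $\Lambda\theta\ge-A$ does not directly control $H\theta$ from above, and the near-diagonal contribution in~\eqref{eq:Hilbert:even} is only tame through a self-consistent use of the hypothesis at finer scales. A secondary difficulty is the passage from the logarithmic single-scale bound to the sharp $(h/x_1)^{1/2}$ modulus — unlike the quadratic estimate in Theorem~\ref{thm:stationary} (which telescopes naturally to $C^{1/2}$), a log-linear bound telescopes less favorably, so this final step requires care in choosing the dyadic sequence and in summing via Cauchy--Schwarz.
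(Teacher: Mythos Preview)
Your route has two genuine gaps, and the second one is fatal even if the first is patched.

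\textbf{First gap: the upper bound on $-H\theta(x_1)$.} You correctly note that the far part $y\notin(x_1/2,2x_1)$ in \eqref{eq:Hilbert:even} contributes only $C\|\theta\|_{L^\infty}$. But the near part $\frac{2x_1}{\pi}\int_{x_1/2}^{2x_1}\frac{\theta(x_1)-\theta(y)}{y^2-x_1^2}\dd y$ requires a modulus of continuity on $\theta$ at $x_1$ to be finite with a quantitative bound --- and that modulus is precisely what the theorem asserts. Your phrase ``self-consistent use of the hypothesis at finer scales'' does not supply an argument; no bootstrap is exhibited, and the one-sided bound $\Lambda\theta\ge -A$ (equivalently $\partial_x(-H\theta)\ge -A$) only gives $-H\theta(x_1)\ge -Ax_1$ after integrating from $0$, which is the wrong direction.

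\textbf{Second gap: the telescoping cannot upgrade a logarithmic modulus to $C^{1/2}$.} Suppose you do obtain $-H\theta(x_1)\le CM$ with $M=\max(\|\theta\|_{L^\infty},Ax_1)$. Your displayed inequality then reads, for $h=x_1-x_2\le x_1/2$,
\[
D \;\le\; \frac{CM + \pi Ah}{\log\frac{x_1+x_2}{h}} \;\le\; \frac{C'M}{\log(x_1/h)}.
\]
This is only a logarithmic modulus. Applying it on the dyadic pairs $(x_2+2^{-k-1}h,\,x_2+2^{-k}h)$ and summing yields
\[
\theta(x_2)-\theta(x_1)\;\le\; C'M\sum_{k\ge 0}\frac{1}{\log(x_2/h)+k\log 2},
\]
which diverges. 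The situation is structurally different from Theorem~\ref{thm:stationary}: there the single-scale bound is \emph{quadratic} in the increment (from Lemma~\ref{lem:nonlinear:positivity}), giving $D_k\lesssim h_k^{1/2}$, a geometric series. A log-linear bound gives a harmonic series. No choice of dyadic sequence or Cauchy--Schwarz reshuffling turns a divergent harmonic sum into a convergent one.

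\textbf{What the paper does instead.} The proof is a barrier argument and does not go through $H\theta$ at all. After scaling to $x_1=1$, set $b(x)=\min\bigl(1,(|x|-1)_+^{1/2}\bigr)$ and compute (Lemma~\ref{lem:barrier}) that $\Lambda b\ge \tfrac{1}{2\pi}$ on $(1,2]$. Then $f=\theta+\max(\|\theta\|_{L^\infty},2\pi A)\,b$ cannot attain its global minimum at any $x_0\in(1,2]$, since at such a point $\Lambda f(x_0)\le 0$ while $\Lambda f(x_0)=\Lambda\theta(x_0)+2\pi A\,\Lambda b(x_0)\ge -A+A=0$. The values for $|x|\le 1$ and $|x|\ge 2$ are handled by monotonicity and the $L^\infty$ bound, forcing the minimum to sit at $x=1$. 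The inequality $f(1)\le f(x)$ is exactly \eqref{e:c12estimate}. The $C^{1/2}$ exponent is built into the barrier $b$ from the start; there is no iteration and no need to control $H\theta$.
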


We note that the above result is a property of functions, and the evolution \eqref{eq:CCF} is not used here.
Before giving the proof of Theorem~\ref{thm:Holder} we discuss two auxiliary lemmas.

\begin{lemma}
\label{lem:Lambda:r}
The function $r(x) = x_+^{1/2}$ satisfies
\[ \Lambda r(x) = \begin{cases}
-\frac 12 |x|^{-1/2} & \text{if } x<0,\\
0  & \text{if } x>0.
\end{cases}\]
\end{lemma}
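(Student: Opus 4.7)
The plan is to use the relation $\Lambda = -H\partial_x$ (the paper's convention), reducing the statement to a direct computation of $Hr'(x)$, and then evaluate the resulting principal-value integral by a substitution.

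First I would note that $r'(y) = \tfrac{1}{2} y^{-1/2}\mathbf{1}_{y>0}$ in the distributional sense, so
\begin{equation*}
\Lambda r(x) \;=\; -H r'(x) \;=\; -\frac{1}{2\pi}\,\mathrm{p.v.}\!\int_0^\infty \frac{y^{-1/2}}{y-x}\,\dd y.
\end{equation*}
I would justify this formula either directly from the identity $H\partial_x = -\Lambda$ recorded right after \eqref{eq:Hilbert}, or by first mollifying $r$ so that everything is classical and then passing to the limit (the decay at infinity is mild enough that all objects are well defined as tempered distributions, with the principal value making sense away from $x=0$).

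Next I would split according to the sign of $x$. The substitution $y = u^2$, $\dd y = 2u\,\dd u$ turns the integral into
\begin{equation*}
\mathrm{p.v.}\!\int_0^\infty \frac{y^{-1/2}}{y-x}\,\dd y \;=\; 2\,\mathrm{p.v.}\!\int_0^\infty \frac{\dd u}{u^2-x}.
\end{equation*}
For $x<0$, write $x=-a$ with $a>0$; the integrand is now nonsingular and a standard computation gives $\int_0^\infty (u^2+a)^{-1}\dd u = \pi/(2\sqrt{a})$, so the full integral equals $\pi/\sqrt{|x|}$, yielding $\Lambda r(x) = -\tfrac12 |x|^{-1/2}$.

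For $x>0$, the only delicate step, I would use the partial-fraction decomposition
\begin{equation*}
\frac{1}{u^2-x} \;=\; \frac{1}{2\sqrt{x}}\!\left(\frac{1}{u-\sqrt{x}} - \frac{1}{u+\sqrt{x}}\right),
\end{equation*}
and show that the principal-value integral of the right side vanishes: for $R>\sqrt{x}+1$ and $\varepsilon<\sqrt{x}$, the integral over $(0,\sqrt{x}-\varepsilon)\cup(\sqrt{x}+\varepsilon,R)$ evaluates explicitly to
\begin{equation*}
\frac{1}{2\sqrt{x}}\Bigl[\log\tfrac{|u-\sqrt{x}|}{u+\sqrt{x}}\Bigr]_0^R \;-\;\text{(boundary contributions that cancel as }\varepsilon\to 0\text{)},
\end{equation*}
and the remaining boundary pieces at $0$ and $R$ vanish in the limit $R\to\infty$. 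Hence $Hr'(x)=0$ and therefore $\Lambda r(x)=0$ for $x>0$. The only step requiring care is the cancellation in this principal-value computation; everything else is a bookkeeping exercise.
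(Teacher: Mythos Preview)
Your argument is correct. It proceeds by a genuinely different route from the paper, though. The paper exploits the Dirichlet--to--Neumann characterization of $\Lambda$: it writes down the explicit harmonic extension
\[
u(x,y)=\left(\frac{\sqrt{x^2+y^2}+x}{2}\right)^{1/2}=\operatorname{Re}\sqrt{x+iy}
\]
in the upper half plane, observes that $u(x,0)=x_+^{1/2}$, and reads off $\Lambda r$ as $-u_y(x,0)$. This is a one-line computation once the extension is recognized, and it avoids any singular integral entirely. Your approach instead uses the identity $\Lambda=-H\partial_x$ and reduces to evaluating $\mathrm{p.v.}\int_0^\infty y^{-1/2}(y-x)^{-1}\dd y$ by the substitution $y=u^2$ and a partial-fraction cancellation. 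The trade-off is that the paper's proof requires the (admittedly classical) insight of guessing the harmonic extension, while yours is entirely mechanical but needs the careful bookkeeping of the principal value when $x>0$. Both are short; yours is arguably more self-contained for a reader who does not have the extension $\operatorname{Re}\sqrt{z}$ at hand.
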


\begin{proof}
This is a classical computation. The function 
\[ 
u(x,y) = \left( \frac{\sqrt{x^2+y^2} + x} 2 \right)^{1/2}
\]
obeys $\lap_{x.y} u = 0$ for $y>0$, i.e., it is harmonic in the upper half plane. Moreover,
\begin{align*}
u(x,0) = x_+^{1/2}, \qquad 
u_y(x,0) =\begin{cases}
\frac 12 |x|^{-1/2} & \text{if } x<0,\\
0  & \text{if } x>0.
\end{cases}
\end{align*}
Thus, we recover $\Lambda r$ as the Dirichlet to Neumann map corresponding to $u$.
\end{proof}

\begin{lemma} \label{lem:barrier}
The function
\[ b(x) = \min \left(1, (|x|-1)_+^{1/2}\right)\]
satisfies
\begin{align}
\Lambda b(x) \geq \frac{1}{2 \pi} \qquad \mbox{for all} \qquad 1 < x \leq 2.
\label{eq:b:lower:bound}
\end{align}
\end{lemma}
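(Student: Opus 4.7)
The plan is to decompose $b$ as $b = B - \phi$, where $B(x) = (|x|-1)_+^{1/2}$ and $\phi(x) = ((|x|-1)^{1/2}-1)_+$ is a nonnegative correction supported on $\{|x| \geq 2\}$. Since $B(y) = r(y-1) + r(-y-1)$, the commutation of $\Lambda$ with translations and with the reflection $y \mapsto -y$, combined with Lemma~\ref{lem:Lambda:r}, yields
\[
\Lambda B(x) = \Lambda r(x-1) + \Lambda r(-x-1) = 0 - \frac{1}{2\sqrt{x+1}}
\]
for all $x \in (1,2]$ (the first term vanishes since $x-1 > 0$, the second is given by Lemma~\ref{lem:Lambda:r} at the negative argument $-x-1$). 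Since $\phi(x) = 0$ on $[-2,2]$, the principal value in $\Lambda \phi$ vanishes at such $x$, and using the evenness of $\phi$ one obtains
\[
\Lambda b(x) = -\frac{1}{2\sqrt{x+1}} + \frac{1}{\pi}\int_2^\infty \phi(y)\left[\frac{1}{(y-x)^2} + \frac{1}{(y+x)^2}\right] dy.
\]

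The key observation now is that both summands on the right are strictly increasing in $x$ on $(1,2)$. For the first this is obvious, and for the second, differentiating under the integral gives $\tfrac{d}{dx}[(y-x)^{-2} + (y+x)^{-2}] = 2(y-x)^{-3} - 2(y+x)^{-3} > 0$ for all $y > x > 0$. Hence $\Lambda b$ is strictly increasing on $(1,2)$, so it suffices to prove \eqref{eq:b:lower:bound} in the limit $x \to 1^+$.

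At $x = 1$ both remaining integrals can be evaluated in closed form. The first is immediate: $\int_2^\infty [(y-1)^{-3/2} - (y-1)^{-2}] dy = 1$. For the second, the substitution $y = t^2+1$ (so that $\phi(y) = t-1$ and $y+1 = t^2+2$) reduces it to $\int_1^\infty 2t(t-1)(t^2+2)^{-2} dt$; splitting the numerator as $2t^2 - 2t$ and using the standard antiderivative for $(t^2+2)^{-2}$, one computes the value $\frac{1}{\sqrt 2}(\pi/2 - \arctan(1/\sqrt 2))$. The $\pm 1/(2\sqrt 2)$ contributions from $\Lambda B(1)$ and from the second integral then cancel exactly, yielding
\[
\Lambda b(1^+) = \frac{1}{\pi}\left(1 - \frac{\arctan(1/\sqrt 2)}{\sqrt 2}\right).
\]
The elementary inequality $\arctan z \leq z$ for $z \geq 0$, applied at $z = 1/\sqrt 2$, gives $\Lambda b(1^+) \geq 1/(2\pi)$, as required.

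The main delicate point is that the bound is essentially tight: the true infimum exceeds $1/(2\pi)$ by only a small margin. Consequently, any loose estimate of the correction integral (say, dropping the $1/(y+x)^2$ term, or crudely bounding it by a constant multiple of $1/(y-1)^2$) would not close the inequality. One must therefore carry out the explicit evaluation via the substitution $y = t^2+1$ and use the exact formula for $\Lambda B$ produced by Lemma~\ref{lem:Lambda:r}.
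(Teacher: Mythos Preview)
Your proof is correct, but it is organized differently from the paper's and ends up doing more work than necessary. Both arguments rest on the same idea: use Lemma~\ref{lem:Lambda:r} to handle the square-root part exactly, express the remainder as an explicit integral, reduce to $x=1$ by monotonicity, and evaluate there. The difference is in the choice of comparison function. You subtract the full even function $B(y)=(|y|-1)_+^{1/2}=r(y-1)+r(-y-1)$, which forces you to carry the exact term $-\tfrac{1}{2\sqrt{x+1}}$ from $\Lambda r(-\cdot-1)$; this in turn obliges you to keep the $(y+x)^{-2}$ piece of the correction integral and to evaluate it via the substitution $y=t^2+1$ and the antiderivative of $(t^2+2)^{-2}$, arriving at $\tfrac{1}{\pi}\bigl(1-\tfrac{1}{\sqrt2}\arctan(1/\sqrt2)\bigr)$. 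The paper instead subtracts only the one-sided function $r(\cdot-1)$, so that $\Lambda r(x-1)=0$ for $x>1$ and the ``far side'' contribution is simply $-\tfrac1\pi\int_{-\infty}^{-1} b(y)\,(x-y)^{-2}\,\dd y$. Bounding $b\le 1$ there and reducing to $x=1$ gives $\ge -\tfrac{1}{2\pi}$ with no further computation, and the near-side integral $\tfrac1\pi\int_2^\infty\tfrac{(y-1)^{1/2}-1}{(y-1)^2}\,\dd y=\tfrac1\pi$ closes the bound exactly at $\tfrac{1}{2\pi}$. So your remark that ``any loose estimate would not close'' is specific to your decomposition; with the paper's one-sided comparison the crude bound $b\le 1$ suffices. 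Your route yields the sharper constant $\tfrac{1}{\pi}\bigl(1-\tfrac{1}{\sqrt2}\arctan(1/\sqrt2)\bigr)\approx 0.180$, at the cost of the arctan computation.
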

\begin{proof}
The proof of this lemma follows from the explicit commutation in Lemma~\ref{lem:Lambda:r} and a comparison principle. First we note that $b(x) = r(x-1)$ for all $x \in [-1,2]$. Then, for all $x\in (1,2]$ we obtain
\begin{align}
\Lambda b(x ) = \Lambda b(x) - \Lambda r(x-1) 
&= \frac{1}{\pi} p.v. \int_{\RR} \frac{b(x) - b(y) - r(x-1) + r(y-1)}{(x-y)^2} \dd y \notag\\
&=  \frac{1}{\pi} p.v. \int_{\RR} \frac{r(y-1) - b(y)}{(x-y)^2} \dd y \notag\\
&= \frac{-1}{\pi} \int_{-\infty}^{-1} \frac{b(y)}{(x-y)^2} \dd y + \frac{1}{\pi} \int_{2}^\infty \frac{(y-1)^{1/2} - 1}{(x-y)^2} \dd y\notag\\
&\geq \frac{-1}{\pi} \int_{1}^{\infty} \frac{1}{(x+y)^2} \dd y + \frac{1}{\pi} \int_{2}^\infty \frac{(y-1)^{1/2} - 1}{(x-y)^2} \dd y \notag\\
&\geq \frac{-1}{\pi} \int_{1}^{\infty} \frac{1}{(1+y)^2} \dd y + \frac{1}{\pi} \int_{2}^\infty \frac{(y-1)^{1/2} - 1}{(1-y)^2} \dd y  \notag\\
&= - \frac{1}{2\pi} + \frac{1}{\pi} = \frac{1}{2 \pi}.
\end{align}
This completes the proof of the lemma.
\end{proof}

\begin{proof}[Proof of Theorem~\ref{thm:Holder}]
Without loss of generality, we prove estimate \eqref{e:c12estimate} for the case $x_1 = 1$. The general case follows by scaling.

Define the function \[
f(x) = \theta(x) + \max \left(\|\theta\|_{L^\infty} , 2\pi A  \right) b\left( x \right).
\]
We prove that the global minimum value of $f$ is achieved at $x=1$. From that, the lemma will follow.

For $|x| > 2$, we have $f(x) \geq \|\theta\|_{L^\infty} \geq f(1)$. Since $f$ is continuous, it must thus achieve its global minimum in the interval $[-2,2]$.

Since the second term in the definition of $f$ is constant for $x \in (-1,1)$ and the first term is monotone, we have that $f(x) \geq f(1)$ for all $x \in [-1,1]$. Therefore, the global minimum of $f$ must be achieved on the interval $[1,2]$. Here we also used that $f$ is even.

Assume that $f$ achieves its global minimum at some point $x_0 \in (1,2]$. Then by the definition of $A$ we have that
\[ 
0 > \Lambda f(x_0) = \Lambda \theta(x_0) + 2\pi A  \Lambda b\left( x_0 \right) 
\geq A \left( -1  +  2\pi \Lambda b\left( x_0 \right) \right)
\geq 0,
\]
where in the last inequality we have used \eqref{eq:b:lower:bound}. This yields a contradiction which means that the minimum must be achieved at $x_0= 1$. Therefore, for any $x > 1$, we have $f(1) \leq f(x)$, which by definition means that
\[ 
\theta(1) - \theta(x) \leq C \max\left(\|\theta\|_{L^\infty},A \right) \left( x - 1\right)^{1/2}.
\]
As noted earlier, the proof of the lemma now follows from the above estimate and re-scaling.
\end{proof}

\begin{remark}
We conclude the subsection by pointing out that in the presence of symmetries, additional information may be obtained for the endpoint Sobolev embeddings. 
For instance, recall that in $1D$ the Sobolev embedding of $H^{1/2}$ in $L^\infty$ barely fails.
However, under the additional assumption of monotonicity away from the origin, the boundedness of the $H^{1/2}$ norm implies the continuity of the solution. Indeed, letting $0<x_2<x_1$ we have
\begin{align}
c \|\theta\|_{\dot{H}^{1/2}}^2  
= \int \!\!\! \int \frac{(\theta(y)-\theta(z))^2}{(y-z)^2} \dd y \dd z 
&\geq \int_{0 < y < x_2} \int_{z >x_1} \frac{(\theta(y)-\theta(z))^2}{(y-z)^2} \dd y \dd z \notag\\
&\geq (\theta(x_1)-\theta(x_2))^2 \int_{0 < y < x_2} \int_{z>x_1} \frac{1}{(y-z)^2} \dd y \dd z \notag\\
&=  (\theta(x_1)-\theta(x_2))^2  \log \frac{x_1}{x_1-x_2}
\label{eq:C12:22}
\end{align}
for some universal constant $c>0$. That is, when $\theta$ is even, positive, and decreasing away from the origin, we have
 \begin{align*}
0 \leq \theta(x_2) - \theta(x_1) \leq c \|\theta\|_{\dot{H}^{1/2}} \left( \log \frac{x_1}{x_1-x_2} \right)^{-1/2}
\end{align*}
for any $0<x_2<x_1$, where $C>0$ is a universal constant. The above estimate is in direct analogy with the fact that in 2D, $H^1$ functions that obey the maximum principle on every ball have a logarithmic modulus of continuity~\cite{SereginSilvestreSverakZlatos12}.
\end{remark}

\subsection{Weak solutions}
\begin{definition} \label{d:weak-solution}
Let $\theta_0 \in L^1 \cap L^\infty$ be non-negative, even, decreasing on $(0,\infty)$, and let $T>0$. We call a non-negative, even, decreasing on $(0,\infty)$ function 
\[
\theta \in L^\infty(0,T; L^1(\RR) \cap L^\infty(\RR))  \cap L^2(0,T;\dot{H}^{1/2}(\RR))
\]
a weak solution to the initial value problem \eqref{eq:CCF} with initial datum $\theta_0$ on $[0,T)$, if
\[
\int_0^T \int_{\RR} \left( \theta  \partial_t \phi - H \theta\, \theta \partial_x \phi + \frac 12 \theta^2 \Lambda \phi + \frac 12 D[\theta] \phi \right) \dd x \dd t + \int_\RR \theta_0 \phi \dd x= 0
\]
for any $\phi \in C_0^\infty([0,\infty) \times \RR)$, where
\begin{align}
D[\theta](x) = C \int_{\RR} \frac{(\theta(x) - \theta(y))^2}{(x-y)^2} \dd y
\end{align}
where $C$ is a universal constant.
\end{definition}
The above definition of a weak solution is natural in view of the pointwise identity
\[
H \theta \, \partial_x \theta 
= \partial_x (H \theta \, \theta) - \partial_x H \theta \, \theta
= \partial_x (H \theta \, \theta) + \Lambda \theta \, \theta 
= \partial_x (H \theta \, \theta) + \frac 12 \Lambda (\theta^2)  + \frac 12 D[\theta].
\]
Note that in one dimension the space $H^{1/2} \cap L^\infty$ is an algebra.
Also, since
\[
D[\theta] \geq 0, \qquad \mbox{and} \qquad \int_{\RR} D[\theta](x) \dd x = \| \theta\|_{\dot{H}^{1/2}}^2
\]
we have
\[
\left| \int_\RR D[\theta] \phi \dd x \right| =  \int_\RR D[\theta] |\phi| \dd x \leq \|\phi\|_{L^\infty} \|\theta\|_{\dot{H}^{1/2}}^2
\]
so that all terms in the distributional definition of the weak solution are well-defined. 

\begin{remark}
Consider a sequence of viscosity approximations, i.e. global in time smooth (in positive time) solutions $\theta^\eps$ of 
\[
\partial_t \theta^\eps + H\theta^\eps\, \partial_x \theta^\eps = \eps \Delta \theta^\eps, \qquad \theta^\eps = \theta_0,
\]
for $\eps \in (0,1]$. Using the $L^\infty$ (cf.~Lemma~\ref{lem:maxprinciple}) and the $L^1$ energy inequality (cf.~Lemma~\ref{l:globaldissipation}) is not difficult to see that
\[
\{ \theta^\eps \}_{\eps >0} \mbox{ is uniformly bounded in }L^\infty_t L^1_x \cap L^\infty_t L^\infty_x  \cap L^2_t \dot{H}^{1/2}_x
\]
globally in time. Moreover, we have the energy inequality
\[
\|\theta^\eps(T)\|_{L^1} + \int_0^T \|\theta^\eps(t)\|_{\dot{H}^{1/2}}^2 \dd t \leq \|\theta_0\|_{L^1}
\]
for any $T>0$.
It seems however that one is missing a bit more regularity, in order to ensure that the sequence $\theta^\eps$ converges along a subsequence to a weak solution $\theta$ of \eqref{eq:CCF}. The main missing part is the convergence of $D[\theta^\eps] \to D[\theta]$ in $L^1_{loc,t,x}$.
\end{remark}

\begin{remark}
One-sided conditions on the derivative of the drift velocity are known~\cite{Evans98} to ensure the uniqueness of {\em weak solutions} to the Burgers and Hamilton-Jacobi equations (the extremal equation on the scale \eqref{eq:CCF:smooth:drift}). 
For Burgers solutions the shocks occur in one direction, which follows since the derivative of the solution obeys a one-side bound. For the Hamilton-Jacobi ($s=1$) equation we have one-sided bounds for the second derivatives. One may expect that a one sided bound on $\Lambda \theta = - \partial_x (H\theta)$ is thus relevant in establishing the uniqueness of weak solutions. Insofar this remains open. 
\end{remark}

\begin{remark}
In \cite{CastroCordoba09} the authors construct the {\em expanding semicircle solution}
\begin{align*}
\theta&: (0,+\infty) \times \R \to \R, \\
\theta&(t,x) = - C (1 - x^2/t^2)_+^{1/2}
\end{align*}
where $C>0$ is a universal constant. This is a weak solution to \eqref{eq:CCF} that converges to zero as $t \to 0^+$ for all $x \neq 0$. Using the transformation $t \mapsto -t$ and $\theta \mapsto -\theta$
we also derive the {\em shrinking semicircle solution}
\begin{align*}
\theta&: (-\infty,0) \times \R \to \R, \\
\theta&(t,x) = C (1 - x^2/t^2)_+^{1/2}
\end{align*}
which is also a weak solution and converges to zero as $t \to 0^-$ for $x \neq 0$.

Based on our numerical computations, only the former appears to be stable. Additionally for the later solution the lower bound on $\Lambda \theta$ does not hold.
This suggests that the shrinking semicircle may not be a vanishing viscosity limit.

We can compare this situation with Burgers or Hamilton-Jacobi equation in the sense that time reversibility is broken in the vanishing viscosity limit by the entropy condition or the viscosity solution condition respectively.
\end{remark}

\section*{Acknowledgements}
The authors are thankful to Peter Constantin, Diego C\'ordoba, Hongjie Dong, Alexander Kiselev, and Alexis Vasseur for stimulating discussions about the model \eqref{eq:CCF}. VV is grateful to the hospitality of the Department of Mathematics at the University of Chicago where part of this work was completed. The work of LS was in part supported by NSF grants DMS-1254332 and DMS-1065979. The work of VV was in part supported by NSF grant DMS-1348193.


\end{document}